\theoremstyle{plain}
\newtheorem{theorem}{Theorem}[section]
\newtheorem{lemma}[theorem]{Lemma}
\newtheorem{prop}[theorem]{Proposition}
\newtheorem{definition}[theorem]{Definition}
\newtheorem{remark}[theorem]{Remark}
\theoremstyle{definition}
\theoremstyle{remark}
\numberwithin{equation}{section} \numberwithin{figure}{section}
\def \de {\partial}
\def \phi {\varphi}
\def \R {\mathbb{R}}
\def \l {\lambda}
\def \LL {\mathscr L_a}
\def \G{\Gamma}
\newcommand{\Ba}{\mathscr B_z^{(a)}}
\newcommand{\paa}{z^a \de_z}
\def \vf{\varphi}
\def \ve{\varepsilon}
\newcommand{\p}{\partial}
\newcommand{\Rnn}{\mathbb R^{n+1}}
\newcommand{\Rnp}{\mathbb R^{n+1}_+}
\newcommand{\Rn}{\R^n}
\newcommand{\la}{\lambda}
\newcommand{\gga}{\G_{2}^{(a)}}
\newcommand{\ga}{\G^{(a)}}
\newcommand{\Bz}{\mathscr B_\zeta^{(a)}}
\newcommand{\paz}{\zeta^a \de_\zeta}
\newcommand{\Pa}{\mathscr P^{(a)}_t}
\begin{document}

\title{Two classical properties of the Bessel quotient $I_{\nu+1}/I_\nu$ and their implications in pde's}


\author{Nicola Garofalo}
\address{Dipartimento di Ingegneria Civile, Edile e Ambientale (DICEA) \\ Universit\`a di Padova\\ 35131 Padova, ITALY}

\email[Nicola
Garofalo]{nicola.garofalo@unipd.it}

\thanks{The author was supported in part by a Progetto SID (Investimento Strategico di Dipartimento) ``Non-local operators in geometry and in free boundary problems, and their connection with the applied sciences", University of Padova, 2017.}

\dedicatory{In ricordo di mio padre} 

\date{}

\begin{abstract} 
Two elementary and classical results about the Bessel quotient $y_\nu = \frac{I_{\nu+1}}{I_\nu}$ state that on the half-line $(0,\infty)$ one has for $\nu\ge -1/2$:
\begin{itemize}
\item[(i)] $0 < y_\nu< 1$;
\item[(ii)] $y_\nu$ is strictly increasing.
\end{itemize}
In this paper we show that (i) and (ii) have some nontrivial and interesting applications to pde's. As a consequence of them, we establish some sharp new results for a class of degenerate partial differential equations of parabolic type in $\Rnp\times (0,\infty)$ which arise in connection with the analysis of the fractional heat operator $(\p_t - \Delta)^s$ in $\Rn\times (0,\infty)$, see Theorems \ref{T:harnackbessel0}, \ref{T:LYext0}, \ref{T:struwe0} and \ref{T:poon0} below.
\end{abstract}

\maketitle

\tableofcontents

\section{Introduction}\label{intro}

A random variable $Y$ taking values from the nonnegative integers is called a Bessel random variable, or Bessel distribution, with parameters $\nu>-1$ and $z>0$ if 
\[
\operatorname{Pr}(Y = k) = \frac{1}{I_\nu(z) \G(k) \G(k+\nu+1)} \left(\frac z2\right)^{2k+\nu},\ \ \ \ \ \ k\in \mathbb N\cup\{0\},
\]
where $I_\nu$ is the modified Bessel function of the first kind, see (5.6) in \cite{PY} and also (1.1) in \cite{YK}. The function $I_\nu$ is not very stable and it is often useful to consider the much more stable Bessel quotient $y_\nu = \frac{I_{\nu+1}}{I_\nu}$. On the real half-line $z\ge 0$ these two functions are connected by the equation
\[
I_\nu(z) = \frac{1}{\G(\nu+1)} \left(\frac z2\right)^\nu \exp\left(\int_0^z y_\nu(t) dt\right),
\]
see Proposition \ref{P:conn} below. 

The Bessel quotient $y_\nu$ plays an important role in a variety of problems from the applied sciences. For instance, it enters in the von Mises-Fisher distribution as the logarithmic derivative of the reciprocal of the norming constant. For $n\ge 2$ let $\mathbb S^{n-1}$ be the unit sphere in $\Rn$ and indicate with $\sigma_{n-1} = 2 \pi^{n/2}/\G(n/2)$ its $(n-1)$-dimensional measure. Let $d\sigma$ indicate the normalized measure on $\mathbb S^{n-1}$, so that $\int_{\mathbb S^{n-1}} d\sigma(x) = 1$. A random vector $x\in \mathbb S^{n-1}$ has the $(n-1)$-dimensional \emph{von Mises-Fisher}  distribution $M_n(\omega,z)$ if its probability density function with respect to the uniform distribution is
\[
f(x;\omega,z) = \left(\frac z2\right)^{\frac n2 - 1} \frac{1}{\G(n/2) I_{\frac n2 - 1}(z)} \exp \left\{z<\omega,x>\right\}.
\]
The parameters $\omega\in \mathbb S^{n-1}$ and $z\ge 0$ are respectively called the \emph{mean direction} and the \emph{concentration parameter} of the distribution. 
For $n=2$, $M_2(\omega,z)$ is the distribution on the circle introduced in 1918 by R. von Mises in \cite{vonmises} to study the
deviations of measured atomic weights from integral values. When $n=3$, $M_3(\omega,z)$ are called the Fisher distributions since they were systematically studied in 1953 by R. Fisher, who used them to investigate statistical problems in paleomagnetism, see \cite{Fisher}. But Fisher distributions first appeared in Physics in the 1905 work of P. Langevin \cite{La}, where he showed that in a collection of weakly interacting dipoles of moments $\bf{m}$ subject to an external electric field, the directions of the dipoles $\frac{\bf{m}}{|\bf{m}|}$ have a Fisher distribution. For an interesting account on the von Mises-Fisher distributions we refer the reader to the book of Mardia and Jupp \cite{Ma}. Also, the paper by Schou \cite{Schou} contains various interesting statistical results about the concentration parameter of the distribution.  

Using Cavalieri's Principle and the Poisson representation in \eqref{prepInu} below, it is easy to recognize that for $z>0$
\[
\int_{\mathbb S^{n-1}} \exp \left\{z<\omega,x>\right\} d\sigma(x) =  \G(n/2) \left(\frac z2\right)^{1-\frac n2} I_{\frac n2 - 1}(z),
\]
and thus it is clear that for any $\omega\in \mathbb S^{n-1}$ and $z\ge 0$
\[
\int_{\mathbb S^{n-1}} f(x;\omega,z) d\sigma(x) = 1.
\]
In view of this latter identity the function of $z$,
\[
a_n(z) = \left(\frac z2\right)^{\frac n2 - 1} \frac{1}{\G(n/2) I_{\frac n2 - 1}(z)},
\]
is called the \emph{norming constant} of the distribution.
From formula \eqref{logder2} below we see that
\[
\frac{d}{dz} \log a_n^{-1}(z) = \frac{d}{dz} \log\left(\frac{I_{\frac n2 - 1}(z)}{z^{\frac n2 - 1}}\right) = y_{\frac n2 -1}(z).
\]
This formula underscores the key role of the Bessel quotient $y_{n/2 -1}$ in the von Mises-Fisher distribution $M_n(\omega,z)$.

In 1965 Raj Pal Soni established the following elementary, yet quite important, inequality concerning $y_\nu$, see \cite{So} and also Proposition \ref{P:soni} below: for every $z>0$ one has
\begin{equation}\label{soni0}
y_\nu(z) < 1,\ \ \ \ \ \ \ \ \ \ \ \ \ \ \ \ \ \ \nu > - 1/2.
\end{equation}
Later, Nasell observed in \cite{Na} that \eqref{soni0} is also true when $\nu = -1/2$, see \eqref{Ihalf2} below. The inequality \eqref{soni0} fails for large enough values of $z$ when $-1<\nu<-1/2$, see the Appendix in this paper, and in particular Proposition \ref{P:asy}
 and Remark \ref{R:asy}. 
  
 Another stronger property of the Bessel quotient is the following, see Proposition \ref{P:monotonicityy} below: 
\begin{itemize}
\item[(i)] when $\nu \ge - 1/2$ the function $y_\nu$ strictly increases on $(0,\infty)$ from $y_\nu(0) = 0$ to its asymptotic value $y_\nu(\infty) = 1$; 
\item[(ii)] if instead $-1<\nu < - 1/2$, then $y_\nu$ first increases to its absolute maximum $>1$, and then it becomes strictly decreasing to its asymptotic value $y_\nu(\infty) = 1$. 
\end{itemize}
It is clear that the strict monotonicity of $y_\nu$ in (i), combined with $y_\nu(\infty) = 1$, implies the inequality \eqref{soni0}.

In this paper we show that such two elementary and classical results \eqref{soni0} and (i) about the Bessel quotient $y_\nu$ have some nontrivial and interesting applications to pde's. As a consequence of them, we establish some sharp new results for a class of degenerate partial differential equations of parabolic type in $\Rnp\times (0,\infty)$ which arise in connection with the analysis of the fractional heat operator $(\p_t - \Delta)^s$ in $\Rn\times (0,\infty)$, see Theorems \ref{T:harnackbessel0}, \ref{T:LYext0}, \ref{T:struwe0} and \ref{T:poon0} below.

To set the stage, consider the Bessel process on the half-line $z>0$,
\begin{equation}\label{bes}
\Ba u = z^{-a} \frac{d}{dz} \left(z^a \frac{d}{dz} u\right) = u_{zz} + \frac az u_z.
\end{equation}
This operator has a fractal dimension given by the number $a+1$. Since we are interested in a positive dimension, we assume henceforth that $a>-1$. 

The Bessel process plays an ubiquitous role in many branches of pure and applied sciences. It is well known that if we consider the Laplace operator in $\R^n$, then $\Delta$ acts on functions $u(x) = f(z)$, which depend only on the distance to the origin $z = |x|$, as follows 
\[
\Delta u(x) = \mathscr B^{(n-1)}_z f(z).
\]
More in general, if in $\Rn$ we consider a function with cylindrical symmetry $u(x_1,x_2,...,x_{n}) = f(x_1,z)$, where $z = \sqrt{x_2^2 + ... + x_{n}^2}$, then letting $x_1 = x$, we have
\begin{equation}\label{ms}
\Delta u = \frac{\p^2 f}{\p x^2} + \frac{\p^2 f}{\p z^2} + \frac{n-2}{z} \frac{\p f}{\p z} = \frac{\p^2 f}{\p x^2} + \mathscr B^{(n-2)}_z  f.
\end{equation}
This observation was one of the motivating elements in the 1965 seminal paper of Muckenhoupt and Stein \cite{MS} (see equation (1.2) in their paper and the subsequent discussion). The operator in the right-hand side of \eqref{ms} also arose in Molchanov's 1967 paper \cite{Mo} on the Martin boundary for invariant processes on solvable groups. He focused on the group $\mathbb G = \begin{pmatrix} z & y \\ 0 & 1\end{pmatrix}$ of affine transformations on the line, and on the following subclass 
\begin{equation}\label{mol}
L_\nu = \p_{yy} + \p_{zz} + \frac{2\nu +1}z \p_z = \p_{yy} + \mathscr B^{(2\nu+1)}_z 
\end{equation}
of that of all left-invariant second-order elliptic operators with respect to such group action,
see formulas (2) and (3) in \cite{Mo}. In 1969 the work of Molchanov and Ostrovskii \cite{MO} introduced in probability the idea of traces of Bessel processes.

In 2007 Caffarelli and Silvestre's celebrated extension paper \cite{CS} gave a renewed prominence to the Bessel operator  
in pde's and free boundaries. Among other things, they showed that, if for a given $a\in (-1,1)$ and a $u\in \mathscr S(\Rn)$, one indicates with $U(X)$, with $X = (x,z) \in \Rnp$, $x\in \Rn$, $z>0$, the solution to the Dirichlet problem
\begin{equation}\label{eep}
\begin{cases}
\mathscr L_a U = \operatorname{div}_X(z^a \nabla_X U) = 0\ \ \ \ \ \ \ \text{in}\ \Rnp,
\\
U(x,0) = u(x),
\end{cases}
\end{equation}
then with $s = \frac{1-a}2\in (0,1)$ the following Dirichlet-to-Neumann relation holds
\begin{equation}\label{dtn}
(-\Delta)^s u(x) = - \frac{2^{-a} \Gamma\left(\frac{1-a}2\right)}{\Gamma\left(\frac{1+a}2\right)} \underset{z\to 0^+}{\lim} z^a \frac{\p U}{\p z}(X).
\end{equation}
We note that when $s = 1/2$, then $a = 0$, and the operator in \eqref{eep} is the standard Laplacean in $\Rnp$.

The Caffarelli-Silvestre extension procedure \eqref{eep} has played a pivotal role in the analysis of nonlocal operators such as $(-\Delta)^s$, since via \eqref{dtn} it allows to turn  
problems involving the latter into ones involving the differential (local) operator  
$\mathscr L_a$. The Bessel operator occupies a central position in such procedure since the extension operator $\mathscr L_a$ can be written as follows
\begin{equation}\label{cs}
 \mathscr L_a = z^{a} (\Delta_x + \Ba).
\end{equation}
The reader should note the similarity between \eqref{ms}, \eqref{mol} and \eqref{cs}. In this perspective, one should think of the operator between parenthesis in right-hand side of \eqref{cs} as the standard Laplacean in the space $\R^{n+a+1}$ of fractal dimension $n+a+1$, with variables $(x,y)$, where $x\in \Rn$ and $y \in \R^{a+1}$, acting on a ``cylindrical" function $u(x,y) = f(x,z)$, with $z = |y|$.
In more recent years Stinga and Torrea  have generalized the extension procedure to different classes of operators, including uniformly elliptic operators in divergence form $L = \operatorname{div}(A(x) \nabla)$, see \cite{ST}, or the heat operator $H = \frac{\p}{\p t} - \Delta_x$, see \cite{ST2}. This latter result was also established simultaneously and independently by Nystr\"om and Sande in \cite{NS}.
    
Motivated by such developments, and also by the new ones in \cite{BG}, \cite{BDGP}, \cite{AT}, \cite{G18b}, \cite{BGMN} and \cite{GT}, in this paper we establish some properties of the Bessel semigroup, and provide some interesting applications of these results to the following degenerate parabolic operator in $\Rnp\times (0,\infty)$
\begin{equation}\label{heatext}
\p_t(z^a U) - \LL U = \p_t(z^a U) - \operatorname{div}_X(z^a \nabla_X U),
\end{equation}  
where $U = U(X,t)$ is a function defined in $\Rnp \times (0,\infty)$. Here, we have kept with the notations introduced before \eqref{eep} above. We mention that, similarly to its elliptic predecessor \eqref{eep}, the operator \eqref{heatext} is the extension operator for the fractional powers $(\p_t - \Delta)^s$, $0<s<1$, of the heat operator, see \cite{NS} and \cite{ST2}.

 A key remark concerning \eqref{heatext} is that it belongs to a general class of equations first introduced by Chiarenza and Serapioni in \cite{CS}. These authors considered degenerate parabolic equations in $\Rnn$ of the type 
\begin{equation}\label{csA}
 \p_t (\omega(x) u) - \operatorname{div}(A(x)\nabla u) = 0,
\end{equation}
where $\omega$ is a $A_2$-weight of Muckenhoupt in $\Rn$, and $A(x)$ is a matrix-valued function with bounded measurable coefficients, for which $A(x) = A(x)^T$, and such that 
\[
<A(x)\xi,\xi> \cong \omega(x) |\xi|^2.
\]
In their main result, they proved that nonnegative solutions of \eqref{csA} satisfy a scale invariant Harnack inequality on the standard parabolic cylinders. Such result proved to be the appropriate parabolic counterpart of the elliptic one previously obtained by Fabes, Kenig and Serapioni in \cite{FKS}. We note here that, 
since $\omega(X) = |z|^a$ belongs to $A_2(\Rnn)$ if and only if $|a|<1$, the model equation \eqref{heatext} is a special case of \eqref{csA}. 

The connection between \eqref{heatext} and the Bessel semigroup is in the fact that, similarly to \eqref{cs}, we can alternatively write the extension operator as
\[
\p_t(z^a U) - \LL U = z^a(\p_t - \Delta - \Ba).
\]
  
The parabolic operator \eqref{heatext} has recently received increasing attention. In connection with the parabolic Signorini problem, which is intimately linked to the obstacle problem for the fractional power $(\p_t - \Delta)^{1/2}$, the analysis of the case $a = 0$ in \eqref{heatext} was extensively developed in the monograph \cite{DGPT}. The general case $-1<a<1$ was studied in \cite{BG} in the problem of the unique continuation backward in time. In the paper \cite{ACM} the authors established the optimal interior regularity of the solutions of the thin obstacle problem for \eqref{heatext}. In \cite{AT} the authors obtained various interesting results on the nodal sets of the solutions of \eqref{heatext}. The paper \cite{G18b} studied the extension problem for hypoelliptic sub-Laplaceans of H\"ormander type. Finally, in the forthcoming article \cite{BDGP} the authors develop the analysis of the singular part of the free boundary in the thin obstacle problem studied in \cite{ACM}. Of course, this list of works is far from being exhaustive.

To introduce our results we recall that in their celebrated work  \cite{LY} Li and Yau proved (among other things) that if $f>0$ is a solution of the heat equation $\p_t f - \Delta f = 0$ on a boundaryless, complete $n$-dimensional Riemannan manifold $\mathbb M$ having $\operatorname{Ricci} \ge 0$, then its entropy $u = \log f$ satisfies the (deep) inequality on $\mathbb M\times (0,\infty)$,
\begin{equation}\label{LY0}
|\nabla u|^2 - \p_t u \le \frac{n}{2t}.
\end{equation}
We mention that  the inequality \eqref{LY0} becomes an equality when $f$ is the heat kernel in flat $\Rn$, see \eqref{heat} below.  The importance of the inequality \eqref{LY0} is underscored by the fact that a direct remarkable consequence of it is the following sharp form of the Harnack inequality, valid for any $x, y\in \mathbb M$ and any $0<s<t<\infty$,
\begin{equation}\label{harnack0}
f(x,s) \le f(y,t) \left(\frac ts\right)^{\frac n2} \exp\left(\frac{d(x,y)^2}{4t}\right).
\end{equation} 
Such Harnack inequality is keen to that proved independently by Hadamard \cite{Ha} and Pini \cite{Pi} for the standard heat equation in the plane. One remarkable aspect of \eqref{harnack0} is that the constant $\left(\frac ts\right)^{\frac n2} \exp\left(\frac{d(x,y)^2}{4t}\right)$ in its right-hand side is explicit and best possible.

In this note we start from a seemingly very simple problem. Namely, we consider the Cauchy problem for the Bessel operator $\Ba$ on the half-line $\{z>0\}$,
 with Neumann boundary condition (or  \emph{Feller's zero-flux condition}, see Section \ref{S:EM} below), 
\begin{equation}\label{CP0}
\begin{cases}
\de_t u  - \Ba u = 0,
\\
u(z,0) = \vf(z),
\\
\underset{z\to 0^+}{\lim} \paa u(z,t) = 0.
\end{cases}
\end{equation}
This corresponds to Brownian motion on the half-line $(0,\infty)$ reflected at $z = 0$, as opposed to killed Brownian motion, when a Dirichlet condition is imposed. We denote by $\{P^{(a)}_t\}_{t\ge 0}$ the semigroup associated with \eqref{CP0} and given by the formula \eqref{repfor} below. For the definition of the space $C_{(a)}^1(0,\infty)$ see Section \ref{S:besselsg} below. Our first main result is the following.

\begin{theorem}[Li-Yau type inequality]\label{T:LYgen}
Let $a\ge 0$. Given a function $\vf\ge 0$ such that $\vf\in \mathscr C_{(a)}^1(0,\infty)$, we have for any $z>0$ and $t>0$,
\begin{equation}\label{LYbessel0}
\left(\de_z \log P^{(a)}_t \vf(z)\right)^2 -  \de_t \log P^{(a)}_t \vf(z) <  \frac{a+1}{2t}.
\end{equation}
When $z = 0$ the inequality \eqref{LYbessel0}
is true for every $a>-1$ and with $\le$ instead of $<$.
\end{theorem}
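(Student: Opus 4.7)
The plan is to exploit the explicit heat-kernel representation of $P_t^{(a)}\varphi$ and reduce the Li--Yau estimate to a pointwise inequality for the fundamental solution, which itself follows at once from the Soni--Nasell inequality $y_\nu<1$. Throughout, set $\nu=(a-1)/2$ and $w=z\zeta/(2t)$, so that the hypothesis $a\ge 0$ coincides exactly with the regime $\nu\ge -1/2$ discussed in the introduction.

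First I will compute the logarithmic derivatives of the Bessel heat kernel, which has the form
\[
p_t^{(a)}(z,\zeta)=\frac{1}{2t}\,(z\zeta)^{-\nu}\,e^{-(z^2+\zeta^2)/(4t)}\,I_\nu\!\left(\frac{z\zeta}{2t}\right).
\]
The decisive identity is $I_\nu'(w)/I_\nu(w)=y_\nu(w)+\nu/w$, through which the singular $\nu/w$ terms cancel exactly against the $\nu\log z$ (and, after differentiating in $t$, the $\nu\log t$) contributions of the prefactor. This produces
\[
\partial_z\log p_t^{(a)}=\frac{\zeta\,y_\nu(w)-z}{2t},\qquad \partial_t\log p_t^{(a)}=-\frac{a+1}{2t}+\frac{z^2+\zeta^2-2z\zeta\,y_\nu(w)}{4t^2},
\]
and a short algebraic expansion then yields the key pointwise identity
\[
\bigl(\partial_z\log p_t^{(a)}\bigr)^2-\partial_t\log p_t^{(a)}=\frac{a+1}{2t}+\frac{\zeta^2\bigl(y_\nu(w)^2-1\bigr)}{4t^2}.
\]

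Next I pass from the kernel to a general $\varphi\ge 0$, $\varphi\not\equiv 0$, by a Jensen-type averaging. Writing $u(z,t)=P_t^{(a)}\varphi(z)=\int_0^\infty p_t^{(a)}(z,\zeta)\varphi(\zeta)\zeta^a\,d\zeta$, I introduce the probability measure $d\mu_{z,t}(\zeta)=u(z,t)^{-1}p_t^{(a)}(z,\zeta)\varphi(\zeta)\zeta^a\,d\zeta$ (well defined since $u>0$). Differentiating under the integral gives $\partial_z\log u=\mathbb{E}_\mu[\partial_z\log p_t^{(a)}]$ and $\partial_t\log u=\mathbb{E}_\mu[\partial_t\log p_t^{(a)}]$, so Cauchy--Schwarz $(\mathbb{E}_\mu X)^2\le\mathbb{E}_\mu[X^2]$ produces
\[
\bigl(\partial_z\log u\bigr)^2-\partial_t\log u\;\le\;\mathbb{E}_\mu\!\left[\bigl(\partial_z\log p_t^{(a)}\bigr)^2-\partial_t\log p_t^{(a)}\right]=\frac{a+1}{2t}+\frac{1}{4t^2}\,\mathbb{E}_\mu\!\bigl[\zeta^2(y_\nu(w)^2-1)\bigr].
\]

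Finally, for $a\ge 0$ and $z>0$, Soni's inequality $y_\nu(w)<1$ on $(0,\infty)$ forces the integrand $\zeta^2(y_\nu(w)^2-1)$ to be strictly negative on the full support of $\mu$, which delivers the strict inequality \eqref{LYbessel0}. At $z=0$ one has $w\equiv 0$ and $y_\nu(0)=0$ for every $\nu>-1$, so Soni's hypothesis is not needed; the pointwise identity reduces to $\frac{a+1}{2t}-\frac{\zeta^2}{4t^2}$ and the estimate survives for all $a>-1$, but only with $\le$ because the Cauchy--Schwarz step degenerates to equality there (indeed $\partial_z\log p_t^{(a)}(0,\zeta)\equiv 0$). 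The main technical obstacle is precisely the bookkeeping in the first step: one must verify that the singular $\nu/w$ contributions from $I_\nu'/I_\nu$ exactly neutralize the singularities from the Gaussian prefactor so that the final expression depends only on the well-behaved Bessel quotient $y_\nu$. Once this clean identity is established, the Jensen step and the closing appeal to property $y_\nu<1$ are essentially automatic, which is the whole rationale for phrasing the Li--Yau inequality in terms of the Bessel quotient.
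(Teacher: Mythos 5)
Your proof is correct and follows essentially the same route as the paper: the pointwise identity $\left(\de_z \log p^{(a)}\right)^2-\de_t\log p^{(a)}=\frac{a+1}{2t}+\frac{\zeta^2}{4t^2}\left(y_{\frac{a-1}{2}}^2-1\right)$ (Proposition \ref{P:LYp}), a Cauchy--Schwarz/Jensen averaging against the kernel measure, and Soni's bound $y_\nu<1$ for $\nu\ge -1/2$ to obtain strictness when $z>0$. One cosmetic remark: at $z=0$ the non-strict conclusion is not really forced by the degeneracy of Cauchy--Schwarz (the $-\zeta^2/(4t^2)$ term would still yield a strict inequality for $\vf\not\equiv 0$); the theorem only asserts $\le$ there, so your argument suffices as written.
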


Using Theorem \ref{T:LYgen} we then prove the following sharp result. 

\begin{theorem}[Harnack inequality]\label{T:harnackbessel0}
Let $a\ge 0$. For every $\vf\ge 0$ such that $\vf\in \mathscr C_{(a)}^1(0,\infty)$, we have for $z, \zeta \in \R^+$ and $0<s<t<\infty$,
\begin{equation}\label{harnackba}
P^{(a)}_s \vf(z) < P^{(a)}_t  \vf(\zeta) \left(\frac ts\right)^{\frac{a+1}2} \exp\left( \frac{(z-\zeta)^2}{4(t-s)}\right).
\end{equation}
\end{theorem}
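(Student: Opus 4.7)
My plan is to deduce the Harnack inequality from the Li-Yau estimate in Theorem \ref{T:LYgen} via the classical integration-along-a-path argument. Setting $u(z,\tau) = P^{(a)}_\tau \varphi(z)$ and assuming $\varphi\not\equiv 0$ (otherwise both sides vanish), positivity of the Bessel semigroup gives $u>0$ on $(0,\infty)\times(0,\infty)$, so $v := \log u$ is well-defined and smooth there. Theorem \ref{T:LYgen} then reads
\begin{equation*}
v_z^2 - v_\tau < \frac{a+1}{2\tau}\qquad \text{on } (0,\infty)\times(0,\infty).
\end{equation*}

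Given $z,\zeta>0$ and $0<s<t$, I would join $(z,s)$ to $(\zeta,t)$ in space-time by the affine curve $\eta(\tau)=z+\tfrac{\tau-s}{t-s}(\zeta-z)$, $\tau\in[s,t]$. This path stays in $(0,\infty)$ because both endpoints do, and its velocity is the constant $\eta'(\tau)=(\zeta-z)/(t-s)$. Differentiating $v(\eta(\tau),\tau)$ in $\tau$ and using Theorem \ref{T:LYgen} to bound $v_\tau$ from below by $v_z^2-(a+1)/(2\tau)$, one obtains
\begin{equation*}
\frac{d}{d\tau}\,v(\eta(\tau),\tau) \;=\; v_z\,\eta'(\tau)+v_\tau \;>\; v_z\,\eta'(\tau)+v_z^2-\frac{a+1}{2\tau}.
\end{equation*}
The elementary bound $v_z^2+v_z\,\eta'(\tau) \ge -\eta'(\tau)^2/4$ (completion of the square in $v_z$) then yields
\begin{equation*}
\frac{d}{d\tau}\,v(\eta(\tau),\tau) \;>\; -\frac{(\zeta-z)^2}{4(t-s)^2} - \frac{a+1}{2\tau}.
\end{equation*}

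Integrating this from $\tau=s$ to $\tau=t$ gives $\log u(\zeta,t)-\log u(z,s) > -(\zeta-z)^2/(4(t-s)) - \tfrac{a+1}{2}\log(t/s)$, which is exactly \eqref{harnackba} after exponentiation.

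I do not anticipate a serious obstacle: once Theorem \ref{T:LYgen} is in hand, this is the standard Li-Yau-to-Harnack bootstrap, and the key algebraic mechanism is simply the completion of the square combined with the clever choice of the affine path, which makes $\eta'(\tau)^2$ constant so that the quadratic term integrates cleanly. The only items to check are that $u$ is strictly positive and smooth along the linear path, which follows from the representation of $P^{(a)}_\tau$ as an integral against a strictly positive kernel, and that the strictness in \eqref{harnackba} propagates through the integration, which is immediate because the Li-Yau estimate is strict at every interior point $\eta(\tau)>0$.
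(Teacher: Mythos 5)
Your proof is correct and follows essentially the same route as the paper: the paper omits the proof of Theorem \ref{T:harnackbessel0} and refers to that of Theorem \ref{T:harext0}, which is exactly this integration of the Li--Yau inequality of Theorem \ref{T:LYgen} along the straight space-time segment joining $(z,s)$ to $(\zeta,t)$. Your completion of the square in $v_z$ is the same mechanism as the paper's Young-type inequality with parameter $\ve$ optimized at $\ve = 2(t-s)$, so the two arguments coincide up to the parametrization of the segment.
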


The reader should notice the striking similarity between \eqref{LY0}, \eqref{harnack0} above, and \eqref{LYbessel0}, \eqref{harnackba}.
We emphasize that the factor $\frac{(z-\zeta)^2}{4(t-s)}$ in the exponential in the right-hand side of \eqref{harnackba} reflects the invariance of the heat operator $\de_t    - \Ba$ with respect to the dilations $(z,t) \to (\l z,\l^2 t)$, whereas the factor $\left(\frac ts\right)^{\frac{a+1}2}$ indicates that the number $Q = a+1$ plays the role of a fractal dimension for the semigroup $\{P^{(a)}_t\}_{t\ge 0}$. 

We also stress another aspect of Theorem \ref{T:harnackbessel0} that should not go unnoticed. Consider the quadrant $\mathscr Q^+ = \{(z,t)\in \R^2\mid z>0,\ t>0\}$. In every (elliptic or parabolic) Harnack inequality one expects the constant which multiplies the term in the right-hand side to blow-up as one approaches the boundary of the relevant domain. This does not happen for \eqref{harnackba}, as the factor $\left(\frac ts\right)^{\frac{a+1}2} \exp\left( \frac{(z-\zeta)^2}{4(t-s)}\right)$ does not seem to \emph{see} the vertical portion $\{(0,t)\in \R^2\mid t>0\}$ of $\p \mathscr Q^+$, exactly as for the global inequality \eqref{harnack0} above, in which there is no boundary. For instance, we can let $\zeta\to 0^+$, or even let $z, \zeta\to 0^+$, without causing the factor $\left(\frac ts\right)^{\frac{a+1}2} \exp\left( \frac{(z-\zeta)^2}{4(t-s)}\right)$ to blow-up. In other words, Theorem \ref{T:harnackbessel0} behaves like an \emph{interior} Harnack inequality in the whole half-plane $\mathscr Q  = \{(z,t)\in \R^2\mid z\in \R,\ t>0\}$. The explanation for this is in the vanishing Neumann condition in \eqref{CP0} above. Such condition implies that solutions to \eqref{CP0} be smooth in $z$ (in fact, real analytic) up to the vertical line $z = 0$. Therefore, if one defines $U(z,t) = u(|z|,t)$, one obtains a global solution on $\R\times (0,\infty)$ of the pde.

What is remarkable about Theorem \ref{T:LYgen} is that it ultimately hinges on the above inequality \eqref{soni0} for the Bessel quotient $y_{\nu}$. The key connection between Li-Yau and \eqref{soni0} is the identity \eqref{ly} in  Proposition \ref{P:LYp} below.
Since the link between $\nu$ and the parameter $a$ in \eqref{bes} is given by the equation $\nu = \frac{a-1}2$,
from our work in Section \ref{S:ly} it will be evident that, in its sharp form \eqref{LYbessel0} above, such inequality fails to hold when $-1<a< 0$, see also Section \ref{S:appendix}. 

Having said this, the question naturally arises of whether our approach can be pushed to establish a Harnack inequality for the semigroup $\{P^{(a)}_t\}_{t\ge 0}$ also in the range $-1<a<0$. We presently only have some inconclusive indication about the answer. Nonetheless, we emphasize that a Harnack inequality in the range $a\in (-1,0)$ is in fact known. For this one can invoke either Theorem 2.1 in \cite{CS}, or Theorem 4.1 in the more recent work \cite{EM3}. We have already discussed the Harnack inequality in \cite{CS}. In \cite{EM3} the authors prove a Harnack inequality for a general class of degenerate parabolic operators on manifolds with corners which arise in population biology. The Kimura equations treated in \cite{EM3} include as special case the model Cauchy problem \eqref{cpLb}, \eqref{zeroflux} below. As we show in Proposition \ref{P:equiv}, such model is equivalent to the problem \eqref{CP0}, and thus a Harnack inequality for the latter can be obtained from the cited \cite[Theorem 4.1]{EM3}.

We now discuss the second set of main results in this paper. Given a function $\vf\in C^\infty_0(\Rnp)$, consider the Cauchy problem with Neumann condition for the operator \eqref{heatext} above
\begin{equation}\label{cpext0}
\begin{cases}
\p_t(z^a U)  - \LL U = 0\ \ \ \ \ \ \ \ \ \ \ \  \text{in}\ \Rnp\times (0,\infty)
\\
U(X,0) = \vf(X),\ \ \ \ \ \ \ \ \ \ \ \ \ \ \ X\in \Rnp,
\\
\underset{z\to 0^+}{\lim} \paa U(x,z,t) = 0.
\end{cases}
\end{equation}
The solution to \eqref{cpext0} is represented by the formula
\begin{equation}\label{solform0}
U(X,t) = \Pa \vf(X) =  \int_{\Rnp} \vf(Y) \mathscr G_a(X,Y,t) \zeta^a dY,
\end{equation}
where $\mathscr G_a(X,Y,t)$ is given in \eqref{sGa} below. We note that $\{\Pa\}_{t\ge 0}$ defines a semigroup on $C^\infty_0(\Rnp)$.
Concerning such semigroup we have the following sharp Harnack inequality.

\begin{theorem}\label{T:harext0}
Let $a\ge 0$. Let $\vf \ge 0$ be a function for which $U$ given by \eqref{solform0} represents a classical solution to \eqref{cpext0}. Then, for every $X, Y\in \Rnp$ and every $0<s<t<\infty$, we have
\begin{equation}\label{harext0}
U(X,s) <  U(Y,t) \left(\frac ts\right)^{\frac{n+a+1}2} \exp\left(\frac{|X-Y|^2}{4(t-s)}\right).
\end{equation}
When $X = (x,0), Y = (y,0)$ the inequality is valid in the full range $a>-1$, and becomes
\begin{equation}\label{harext02}
U(x,0,s) \le U(y,0,t) \left(\frac ts\right)^{\frac{n+a+1}2} \exp\left(\frac{|x-y|^2}{4(t-s)}\right).
\end{equation}
\end{theorem}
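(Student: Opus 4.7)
The plan is to parallel the proof scheme of Theorem \ref{T:harnackbessel0}: first establish a sharp Li--Yau type inequality of effective dimension $n+a+1$ for positive solutions of \eqref{cpext0}, and then deduce the Harnack estimate by integrating the logarithmic derivative of $U$ along an affine space-time segment joining $(X,s)$ to $(Y,t)$. The starting observation is that $\LL/z^a = \Delta_x + \Ba$ is a sum of two commuting operators acting on disjoint variables, so that the extension semigroup factors, $\Pa = Q^{(x)}_t \circ P^{(a)}_t$, as the composition of the Euclidean heat semigroup $\{Q^{(x)}_t\}$ in $x$ with the Bessel semigroup $\{P^{(a)}_t\}$ of \eqref{CP0} in $z$.

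The factorization is exploited at the level of Li--Yau by introducing the two-parameter object
\[
W(x,z,s,t) := Q^{(x)}_s \bigl[ P^{(a)}_t \vf\bigr](x,z), \qquad s,t \ge 0,
\]
which satisfies $U(X,\tau) = W(x,z,\tau,\tau)$ on the diagonal. Since the two semigroups commute, $W$ separately solves the heat equation in $x$ with clock $s$ and the Bessel equation \eqref{CP0} in $z$ with clock $t$. For each fixed $(z,t)$, the classical Li--Yau inequality applied to the positive solution $x\mapsto W(x,z,s,t)$ yields
\[
|\nabla_x \log W|^2 - \p_s \log W \le \frac{n}{2s};
\]
for each fixed $(x,s)$, Theorem \ref{T:LYgen} applied to the positive Bessel solution $z\mapsto W(x,z,s,t)$ gives
\[
(\p_z \log W)^2 - \p_t \log W < \frac{a+1}{2t}, \qquad z>0,\ a \ge 0.
\]
Adding, specializing to $s=t=\tau$, and using $\p_\tau \log U = (\p_s\log W + \p_t \log W)\big|_{s=t=\tau}$ together with the obvious identifications for the spatial derivatives, we obtain the joint Li--Yau bound
\[
|\nabla_x \log U|^2 + (\p_z \log U)^2 - \p_\tau \log U < \frac{n+a+1}{2\tau}, \qquad z>0,\ a\ge 0.
\]

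With this bound in hand, take the affine path $\gamma(\tau) = X + \tfrac{\tau - s}{t - s}(Y - X)$ for $\tau \in [s,t]$ and use the pointwise estimate $\gamma'(\tau)\cdot \nabla_X \log U \ge -|\nabla_X \log U|^2 - |\gamma'(\tau)|^2/4$ from Young's inequality to compute
\begin{align*}
\log\frac{U(Y,t)}{U(X,s)} &= \int_s^t \bigl[\p_\tau \log U(\gamma(\tau),\tau) + \gamma'(\tau)\cdot \nabla_X \log U(\gamma(\tau),\tau)\bigr]\, d\tau \\
&> -\frac{n+a+1}{2}\log\frac{t}{s} - \frac{|Y-X|^2}{4(t-s)},
\end{align*}
and exponentiating yields \eqref{harext0}. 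For the boundary statement \eqref{harext02}, the affine path stays in $\{z=0\}$, where Theorem \ref{T:LYgen} supplies the Li--Yau inequality with $\le$ (rather than $<$) for the entire range $a > -1$; this simultaneously accounts for the weaker form of \eqref{harext02} and for the broader parameter range.

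The main technical hurdle is verifying that $z \mapsto W(x,z,s,t)$ lies in the admissible class $\mathscr C^1_{(a)}(0,\infty)$ required by Theorem \ref{T:LYgen}, and similarly that $x \mapsto W(x,z,s,t)$ is in the domain of classical Li--Yau; both reduce to standard parabolic regularity combined with the explicit smoothing of the kernels $p(x,s)$ and $G_a(z,\zeta,t)$ and the vanishing Neumann condition in \eqref{cpext0}, which is exactly the subtlety already handled in the one-dimensional setting behind Theorem \ref{T:LYgen}. Once this is settled, the remainder is a direct transcription of the integration-along-paths argument already used to pass from Theorem \ref{T:LYgen} to Theorem \ref{T:harnackbessel0}.
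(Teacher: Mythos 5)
Your proposal is correct in substance, and it reaches the same two-step architecture as the paper (a Li--Yau inequality with effective dimension $n+a+1$, then integration of $\log U$ along the space-time segment, where your Young-inequality step is just the paper's $\ve$-optimization with $\ve=2(t-s)$ built in). The difference lies in how the Li--Yau bound of Theorem \ref{T:LYext0} is produced. The paper exploits the product structure at the \emph{kernel} level: since $\mathscr G_a(X,Y,t)=p(x,y,t)\,p^{(a)}(z,\zeta,t)$ by \eqref{sGa}, the exact identity \eqref{heat} for the Gaussian and Proposition \ref{C:ly} for $p^{(a)}$ add up to the kernel inequality \eqref{LYGa}, and a single Cauchy--Schwarz applied to the representation \eqref{solform0} transfers it to $U$. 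You instead exploit the product structure at the \emph{semigroup} level, writing $U(X,\tau)=W(x,z,\tau,\tau)$ with the two-clock function $W(x,z,s,t)=Q^{(x)}_s\bigl[P^{(a)}_t\vf\bigr]$, applying the solution-level classical Li--Yau in $x$ and Theorem \ref{T:LYgen} in $z$ on the frozen slices, and adding on the diagonal; the chain-rule identity $\p_\tau\log U=(\p_s\log W+\p_t\log W)|_{s=t=\tau}$ is correct, and the boundary case $z=\zeta=0$ with $\le$ and the full range $a>-1$ comes out exactly as in \eqref{harext02}. What the paper's route buys is self-containedness and uniformity: one Cauchy--Schwarz on the joint kernel, no need to invoke the solution-level Li--Yau theorem in $\Rn$ (only the identity \eqref{heat}), and no domain questions for intermediate data. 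What your route buys is modularity, reusing Theorem \ref{T:LYgen} as a black box; its price is precisely the technical point you flag, namely checking that the slices $\zeta\mapsto Q^{(x)}_s\vf(\cdot,\zeta)(x)$ (and the $x$-slices) are admissible for the quoted theorems --- for instance membership in $\mathscr C^1_{(a)}(0,\infty)$ is not automatic from $\vf\in C^\infty_0$ alone when $0<a$ is small, although, as in the paper's own proof, the argument really only needs nonnegativity of the datum plus differentiation under the integral sign, so this is a presentational rather than substantive gap.
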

Once again, one should compare Theorem \ref{T:harext0} to the Harnack inequality \eqref{harnack0} of Li and Yau. Concerning Theorem \ref{T:harext0} two comments are in order:
\begin{itemize}
\item[1)] For any $a\in (-1,1)$ a Harnack inequality for positive solutions of \eqref{cpext0} on the parabolic cylinders $B(r) \times (\alpha r^2,\beta r^2)$, where $B(r)$ is a Euclidean ball in $\Rnn$, can be obtained from Theorem 2.1 in \cite{CS}. One must first prove that the Neumann condition \eqref{cpext0} above implies that $U$ is smooth in $z$ up to the thin manifold  $\mathbb M = (\Rn\times \{0\})\times (0,\infty)$, and then show that the even reflection of $U$ in $z$ is a solution across such manifold. At that point, one can appeal to the above cited interior Harnack inequality in \cite{CS}. The novelty in Theorem \ref{T:harext0} with respect to such approach is that it produces the explicit sharp constant $\left(\frac ts\right)^{\frac{n+a+1}2} \exp\left(\frac{|X-Y|^2}{4(t-s)}\right)$. Furthermore, our direct proof has already encoded the information of being an ``interior" Harnack inequality, and we do not need to even reflect the solution across the thin manifold $\mathbb M$. In fact, with $X = (x,z)$ and $Y = (y,\zeta)$, we can let either $z$ or $\zeta$, or both tend to zero, and yet the constant $\exp\left(\frac{|X-Y|^2}{4(t-s)}\right) = \exp\left(\frac{|x-y|^2 + (z-\zeta)^2}{4(t-s)}\right)$ does not blow up, see \eqref{harext02} above.  
\item[2)] Theorem \ref{T:harext0} is valid in the whole range $a\ge 0$, whereas Theorem 2.1 in \cite{CS} is not applicable when $a\ge 1$, since in such range $\omega(X) = |z|^a$ is not even locally integrable.
\end{itemize} 

The proof of Theorem \ref{T:harext0} is based on the following inequality of Li-Yau type.

\begin{theorem}\label{T:LYext0}
Let $a\ge 0$ and $\vf$ and $U$ be as in Theorem \ref{T:harext0}. Then, for any $X\in \Rnp$ and $t>0$ one has
\begin{equation}\label{LYext}
|\nabla_X \log U(X,t)|^2 - \p_t \log U(X,t) <  \frac{n+a+1}{2t}.
\end{equation}
If instead $X = (x,0), Y = (0,y)\in \Rn\times\{0\}$, then the inequality \eqref{LYext} is true for every $a>-1$, in the following form
\begin{equation}\label{LYext2}
|\nabla_X \log U(x,0,t)|^2 - \p_t \log U(x,0,t) \le  \frac{n+a+1}{2t}.
\end{equation}
\end{theorem}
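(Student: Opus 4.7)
The plan is to exploit that $\Delta_x$ and $\Ba$ act on disjoint sets of variables and commute, so that the kernel $\mathscr G_a$ in the representation \eqref{solform0} factors as
\begin{equation*}
\mathscr G_a(X,Y,t) \,=\, G_t(x-y)\,G^{(a)}_t(z,\zeta),
\end{equation*}
where $G_t(\xi) = (4\pi t)^{-n/2} e^{-|\xi|^2/4t}$ is the Euclidean Gauss--Weierstrass kernel and $G^{(a)}_t$ is the Bessel heat kernel on $(0,\infty)$ underlying the semigroup $\{P^{(a)}_t\}_{t\ge 0}$ of Theorem \ref{T:LYgen}; both factors satisfy the respective heat equations in their own variables, the second with vanishing flux at $z=0$. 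I then introduce the auxiliary two-time function
\begin{equation*}
W(x,z,s,t) \,=\, \int_0^\infty\!\!\int_{\Rn} \vf(y,\zeta)\, G_s(x-y)\, G^{(a)}_t(z,\zeta)\, \zeta^a\, dy\, d\zeta,
\end{equation*}
which coincides with $U(X,t)$ on the diagonal $s=t$. By Fubini, for each fixed $(x,s)$ the map $(z,t)\mapsto W$ equals $P^{(a)}_t$ applied to the nonnegative profile $\zeta\mapsto \int_{\Rn}\vf(y,\zeta) G_s(x-y)\,dy$, and for each fixed $(z,t)$ the map $(x,s)\mapsto W$ equals the Euclidean heat semigroup at time $s$ applied to the nonnegative profile $y\mapsto \int_0^\infty \vf(y,\zeta) G^{(a)}_t(z,\zeta)\, \zeta^a\, d\zeta$.

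These two readings of $W$ put two sharp Li--Yau estimates at our disposal. Theorem \ref{T:LYgen} applied in the $(z,t)$ direction gives, for $a\ge 0$ and $z>0$,
\begin{equation*}
(\p_z \log W)^2 - \p_t \log W \,<\, \frac{a+1}{2t},
\end{equation*}
while the classical Li--Yau estimate for the Euclidean heat semigroup in the $(x,s)$ direction gives
\begin{equation*}
|\nabla_x \log W|^2 - \p_s \log W \,\le\, \frac{n}{2s}.
\end{equation*}
Setting $s=t$, the chain rule yields $\p_t \log U = (\p_s + \p_t)\log W\big|_{s=t}$, whereas $\nabla_x \log U = \nabla_x \log W\big|_{s=t}$ and $\p_z \log U = \p_z \log W\big|_{s=t}$ because $W$ depends on $(x,z)$ only through the explicit position variables. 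Adding the two displayed inequalities at $s=t$ produces
\begin{equation*}
|\nabla_X \log U|^2 - \p_t \log U \,<\, \frac{n}{2t} + \frac{a+1}{2t} \,=\, \frac{n+a+1}{2t},
\end{equation*}
which is \eqref{LYext}. For $X=(x,0)$ the same combination produces \eqref{LYext2}, since the boundary clause of Theorem \ref{T:LYgen} supplies the Bessel piece with $\le$ in place of $<$ and in the full range $a>-1$.

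The main technical hurdle I anticipate is to justify the product factorization of $\mathscr G_a$ in a way compatible with the zero-flux condition at $z=0$, and to verify enough regularity of the two nonnegative profiles above so that Theorem \ref{T:LYgen} and the Euclidean Li--Yau estimate are legitimately applicable on every fiber of $W$. Concretely, one must check that the heat-smoothing of $\vf$ in the $x$ variables preserves the class $\mathscr C^1_{(a)}(0,\infty)$ in $\zeta$, and conversely that the Bessel-smoothing in $\zeta$ keeps the resulting function smooth enough in $x$ for the classical estimate; both facts should follow from the standing assumption that $U$ is a classical solution of \eqref{cpext0}. The passage to the boundary required for \eqref{LYext2} is then obtained either by continuity as $z\to 0^+$ in \eqref{LYext}, or by applying the boundary version of Theorem \ref{T:LYgen} directly.
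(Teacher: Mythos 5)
Your argument is correct, and it reaches the conclusion by a route that is genuinely different in execution from the paper's, even though both ultimately rest on the same two ingredients: the product structure $\mathscr G_a(X,Y,t) = p(x,y,t)\,p^{(a)}(z,\zeta,t)$ of \eqref{sGa} and the Bessel estimate coming from Soni's inequality. The paper works at the kernel level: it proves the pointwise inequality \eqref{LYGa} for the product kernel by adding the exact Gaussian identity \eqref{heat} to Proposition \ref{C:ly}, and then performs a single Cauchy--Schwarz over $\Rnp$ against the full kernel in the representation \eqref{solform0}. You instead work at the semigroup level: you decouple the two time variables through the auxiliary function $W(x,z,s,t)$, read $W$ fiberwise as a Bessel semigroup in $(z,t)$ and as a Gauss--Weierstrass semigroup in $(x,s)$, apply Theorem \ref{T:LYgen} and the classical Euclidean Li--Yau inequality on the respective fibers, and add them on the diagonal $s=t$ via the chain rule identity $\p_t\log U=(\p_s+\p_t)\log W\big|_{s=t}$. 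The bookkeeping of strict versus non-strict inequalities, and of the ranges $a\ge 0$ (interior) versus $a>-1$ (at $z=0$), comes out exactly as in \eqref{LYext} and \eqref{LYext2}. What your tensorization buys is that the extension result becomes a formal corollary of the one-dimensional Theorem \ref{T:LYgen} plus the classical flat Li--Yau estimate, with no new kernel computation; what the paper's route buys is self-containment and minimal hypotheses, since one global Cauchy--Schwarz plus \eqref{LYGa} needs nothing beyond differentiation under the integral sign.

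Two small caveats. First, the technical hurdles you flag are lighter than you suggest: the factorization of $\mathscr G_a$ is the paper's definition \eqref{sGa} (its compatibility with the zero-flux condition follows from \eqref{dzpazero}), and the proof of Theorem \ref{T:LYgen} uses membership in $\mathscr C^1_{(a)}(0,\infty)$ only to guarantee that \eqref{repfor} solves the Cauchy problem --- the Li--Yau inequality itself requires only nonnegativity, finiteness, and differentiation under the integral sign, all of which hold for the smoothed fiberwise profiles of a datum $\vf\in C^\infty_0(\Rnp)$; so your fiberwise applications are justified at the same level of rigor as the paper's own proof. Also, for the Euclidean fiber it is cleaner to quote the semigroup form of Li--Yau (which follows from \eqref{heat} and Cauchy--Schwarz, as in the paper) rather than the general positive-solution statement, so as not to invoke any hypotheses at infinity. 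Second, your closing remark that \eqref{LYext2} could alternatively be obtained ``by continuity as $z\to 0^+$ in \eqref{LYext}'' only covers $a\ge 0$; to get the full range $a>-1$ one must, as you indeed do in the main text, use the boundary clause of Theorem \ref{T:LYgen} directly.
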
 
It is remarkable that, similarly to that of Theorem \ref{T:LYgen}, also  Theorem \ref{T:LYext0} ultimately rests on the elementary inequality \eqref{soni0} above.

The final set of results in this paper has to do with monotonicity formulas. In the paper \cite{BG} the authors have studied the problem of strong unique continuation backward in time for the nonlocal equation
\[
(\p_t - \Delta)^s u = V(x,t) u,\ \ \ \ \ \ \ \ \ 0<s<1.
\]
Given $s$ in such range, let $a = 1-2s$. One of the central results in \cite{BG} was a monotonicity formula for solutions of the extension operator \eqref{heatext} above, with the Neumann condition 
\[
- \frac{2^{-a} \Gamma\left(\frac{1-a}2\right)}{\Gamma\left(\frac{1+a}2\right)}\underset{z\to 0^+}{\lim} \paa U(x,z,t) = V(x,t) u(x,t).
\]
Monotonicity formulas for the heat equation go back to the work of Struwe \cite{struwe}. In \cite{Po} Poon first proved a monotonicity formula for the heat equation analogous to the celebrated one established by Almgren in \cite{A} for multiple-valued harmonic functions. The work \cite{DGPT} contains, among other things, generalizations of Poon's result to the parabolic Signorini problem. This is the case $a = 0$ of the the extension operator \eqref{heatext} above. The forthcoming article \cite{BDGP} extends the results in \cite{DGPT} to the full range $a\in (-1,1)$. 

We emphasize that the problems studied in \cite{DGPT} and \cite{BDGP} are thin obstacle problems for the operator \eqref{heatext} in which the free boundary lives in the thin manifold $\{z=0\}$.
Because of this reason, in all the results in these papers the various quantities at play are ``centered" at one convenient point, the origin, of the thin manifold. By this we mean that the relevant Gaussian measures in the relevant monotonicity formulas are centered at the origin. Such choice is for all practical purposes immaterial if one deals with a problem in which the focus is the thin manifold. But it becomes relevant in situations when this is no longer the case. 

Such considerations lead to the question of the stability of monotonicity properties in dependence of the center of the Gaussian measure. A natural testing ground in this direction is the prototypical non-translation invariant parabolic pde in \eqref{CP0} above. We have discovered that, interestingly, there is a discrepancy in the resulting monotonicity formulas, according to whether the ``center" of the relevant Gaussian measure is located at a point $(\zeta,t)$ with $\zeta >0$, or $\zeta = 0$. Such discrepancy is similar to that in Theorems \ref{T:LYgen} and \ref{T:harext0}, but it no longer rests on \eqref{soni0} above. Remarkably, the monotonicity of the relevant energy and frequency functions now ultimately depends on the  monotonicity of the Bessel quotient $y_\nu$. For the proof of such property see Proposition \ref{P:monotonicityy} below.

For the definition of the scaled energy,  with respect to the backward Gaussian-Bessel measure centered at $(z,T)$, $E^{(a)}_{z,T}(t)$, the reader should see \eqref{e1} below. The following are our main results.

\begin{theorem}[Struwe type monotonicity formula]\label{T:struwe0}
Suppose that $u$ be a solution to \eqref{eu} in $(0,\infty)\times (0,\infty)$ satisfying the condition \eqref{snc} (and other reasonable growth assumptions). Then, for any fixed $z>0$ the function $t\to E^{(a)}_{z,T}(t)$ is strictly decreasing on $(0,T)$ when $a\ge 0$. Precisely, we have
\begin{equation}\label{E's}
\frac{d E^{(a)}_{z,T}}{dt}(t) = - (T-t) \int_0^\infty \bigg[u_{t} + u_\zeta \frac{p^{(a)}_\zeta}{p^{(a)}}\bigg]^2 p^{(a)} \zeta^a d\zeta + G^{(a)}_{z,T}(t),
\end{equation}
where
\begin{equation}\label{G0}
G^{(a)}_{z,T}(t) = - (T-t) \int_0^\infty u_\zeta^2 \left[\frac{p^{(a)}_{\zeta\zeta}}{p^{(a)}} - \frac{(p^{(a)}_\zeta)^2}{(p^{(a)})^2} + \frac{1}{2(T-t)}\right] p^{(a)} \zeta^a d\zeta < 0.
\end{equation}
When $z = 0$, then for any $a> - 1$ we have $G^{(a)}_T(t) = G^{(a)}_{0,T}(t) \equiv 0$, and the function $t\to E^{(a)}_T(t)$ is monotone decreasing on $(0,T)$.
\end{theorem}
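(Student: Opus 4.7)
The plan is to differentiate the scaled energy
\[
E^{(a)}_{z,T}(t) = (T-t)\int_0^\infty u_\zeta(\zeta,t)^2\, p^{(a)}(\zeta,t)\, \zeta^a\, d\zeta
\]
directly in $t$, where $p^{(a)}(\zeta,t)$ is the backward Gaussian-Bessel kernel centered at $(z,T)$. The three terms produced are: a contribution $-\int u_\zeta^2 p^{(a)} \zeta^a d\zeta$ from the outer $(T-t)$; a term $2(T-t)\int u_\zeta u_{\zeta t}\, p^{(a)} \zeta^a d\zeta$ from differentiating $u_\zeta^2$; and a term $(T-t)\int u_\zeta^2 \partial_t p^{(a)} \zeta^a d\zeta$ that, using the backward equation $\partial_t p^{(a)} = -\Bz p^{(a)}$, becomes an integral of $u_\zeta^2 \Bz p^{(a)}$. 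I would then integrate by parts in $\zeta$ (annihilating the boundary term at $\zeta = 0$ via \eqref{snc} and the analogous vanishing-flux property of $p^{(a)}$, and at $\zeta = \infty$ via the Gaussian decay), and substitute $u_t = \Bz u$ from \eqref{eu} to eliminate $u_{\zeta t}$. After a final reorganization the algebra packages the result as the complete square
\[
-(T-t)\int_0^\infty \left[u_t+u_\zeta\frac{p^{(a)}_\zeta}{p^{(a)}}\right]^{2} p^{(a)}\,\zeta^a\, d\zeta
\]
plus a residual integral that by construction has exactly the form stated for $G^{(a)}_{z,T}(t)$.

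To pin down the sign of $G^{(a)}_{z,T}(t)$ it suffices to analyze the bracket inside, which equals $\partial_{\zeta\zeta}\log p^{(a)}+\frac{1}{2(T-t)}$. Using the explicit representation
\[
p^{(a)}(\zeta,t) = \frac{c_a}{T-t}\,(\zeta z)^{-\nu}\, I_\nu\!\left(\frac{\zeta z}{2(T-t)}\right) \exp\!\left(-\frac{\zeta^2+z^2}{4(T-t)}\right),\qquad \nu=\tfrac{a-1}{2},
\]
together with the identity $I'_\nu(\xi)/I_\nu(\xi)=y_\nu(\xi)+\nu/\xi$, the singular terms $\nu/\zeta$ produced by differentiating $(\zeta z)^{-\nu}$ cancel exactly against those arising from the logarithmic derivative of $I_\nu$, yielding the remarkable simplification
\[
\partial_\zeta \log p^{(a)} = \frac{z}{2(T-t)}\,y_\nu(\xi) - \frac{\zeta}{2(T-t)},\qquad \xi = \frac{\zeta z}{2(T-t)},
\]
and consequently
\[
\partial_{\zeta\zeta}\log p^{(a)} + \frac{1}{2(T-t)} = \frac{z^2}{4(T-t)^2}\, y'_\nu(\xi).
\]

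By Proposition \ref{P:monotonicityy}, when $a\ge 0$ (equivalently $\nu\ge -1/2$) the Bessel quotient $y_\nu$ is strictly increasing, so $y'_\nu(\xi)>0$ for $\xi>0$. Combined with the prefactor $z^2$, this shows that for $z>0$ the bracket is strictly positive wherever $u_\zeta\not\equiv 0$, giving $G^{(a)}_{z,T}(t)<0$ and hence strict monotonicity. When $z=0$, the factor $z^2$ vanishes identically; equivalently, the kernel degenerates to the pure Gaussian $p^{(a)}(\zeta,t)\propto (T-t)^{-(a+1)/2}e^{-\zeta^2/(4(T-t))}$, whose logarithmic Hessian in $\zeta$ is exactly $-1/(2(T-t))$. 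Thus $G^{(a)}_T(t)\equiv 0$ without any restriction on the sign of $\nu$, and the monotonicity persists for every $a>-1$.

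The main obstacle I anticipate is bookkeeping the two integrations by parts: at $\zeta=0$ one needs both the flux condition \eqref{snc} on $u$ and the Neumann property built into the backward kernel to kill the boundary terms, while at $\zeta=\infty$ one must verify that the admissible growth of $u$ is dominated by $e^{-\zeta^2/(4(T-t))}$. The rearrangement into the complete-square form is tedious but routine; the conceptual heart of the theorem is the miraculous kernel cancellation above, which funnels the sign analysis of $G^{(a)}_{z,T}$ through the monotonicity of $y_\nu$ established in Proposition \ref{P:monotonicityy}, and thereby exhibits the promised discrepancy between the cases $z>0$ and $z=0$.
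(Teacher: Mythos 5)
Your proposal is correct and follows essentially the same route as the paper: differentiate the energy using the backward equation for $p^{(a)}$, integrate by parts with \eqref{eu} and \eqref{snc} to package the derivative as the complete square plus the residual $G^{(a)}_{z,T}$, and then identify the bracket as $\partial_{\zeta\zeta}\log p^{(a)}+\tfrac{1}{2(T-t)}=\tfrac{z^2}{4(T-t)^2}\,y'_{\frac{a-1}{2}}\bigl(\tfrac{z\zeta}{2(T-t)}\bigr)$, so that the sign comes from Proposition \ref{P:monotonicityy} for $z>0$, $a\ge 0$, and from the degeneration of the kernel to the pure Gaussian \eqref{pazero} when $z=0$. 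The only blemish is that you dropped the factor $\tfrac12$ from the definition \eqref{e1} of $E^{(a)}_{z,T}$, which rescales the derivative by $2$ relative to \eqref{E's} but does not affect the sign analysis or the monotonicity conclusions.
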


\begin{remark}\label{R:discrepancy}
We emphasize the remarkable discrepancy in Theorem \ref{T:struwe0} between the case $z>0$, in which the (strict) monotonicity of $t\to E_{z,T}^{(a)}(t)$ holds only when $a\ge 0$, and that when $z = 0$, in which we have monotonicity in the full range $a>-1$.
\end{remark}

The next theorem is the second main result about monotonicity formulas. For the meaning of the frequency centered at $z\ge 0$, $N_z^{(a)}(r)$, we refer the reader to Definition \ref{D:frequency} below. When $z = 0$ we simply write $N^{(a)}(r)$.

\begin{theorem}[Poon type monotonicity formula]\label{T:poon0}
Let $u$ be a solution to \eqref{eu}, satisfying \eqref{snc}. For any $z>0$ the frequency $r\to N_z^{(a)}(r)$ is strictly increasing when $a\ge 0$. If instead $z = 0$, then the frequency is non-decreasing for any $a>-1$. Furthermore, in this second case we have $N^{(a)}(r) \equiv \kappa$ if and only if $u$ is homogeneous of degree $\kappa$ with respect to the parabolic dilations $(\zeta,t)\to (\la \zeta,\la^2 t)$.
\end{theorem}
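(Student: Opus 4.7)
The strategy is to differentiate $\log N_z^{(a)}(r)$ regarded as a function of the Gaussian parameter $\tau=T-t=r^2$, splitting it as the logarithmic derivative of the scaled energy $E^{(a)}_{z,T}(t)$ of Theorem \ref{T:struwe0} over the weighted $L^2$-mass
\begin{equation*}
H^{(a)}_{z,T}(t)=\int_0^\infty u(\zeta,t)^2\,p^{(a)}(\zeta,z,T-t)\,\zeta^a\,d\zeta,
\end{equation*}
up to the explicit $(T-t)$ factor built into $N_z^{(a)}$ by Definition \ref{D:frequency}. The target is $\frac{d}{d\tau}\log N_z^{(a)}\ge 0$. Using $\p_t u=\Bz u$ and integrating by parts, the zero-flux condition \eqref{snc} eliminates boundary terms at $\zeta=0$ and yields
\begin{equation*}
\frac{d}{dt}H^{(a)}_{z,T}(t)=-2\int_0^\infty u_\zeta^2\,p^{(a)}\zeta^a\,d\zeta,\qquad\int_0^\infty u\Big[u_t+u_\zeta\frac{p^{(a)}_\zeta}{p^{(a)}}\Big]p^{(a)}\zeta^a\,d\zeta=-\int_0^\infty u_\zeta^2\,p^{(a)}\zeta^a\,d\zeta.
\end{equation*}
The second identity is the crucial pairing linking the integrand in Struwe's formula \eqref{E's} to the Dirichlet piece.

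Applying Cauchy--Schwarz to the pairing gives
\begin{equation*}
\int_0^\infty\Big[u_t+u_\zeta\frac{p^{(a)}_\zeta}{p^{(a)}}\Big]^{\!2}\,p^{(a)}\zeta^a\,d\zeta\;\ge\;\frac{\big(\int_0^\infty u_\zeta^2\,p^{(a)}\zeta^a\,d\zeta\big)^{\!2}}{H^{(a)}_{z,T}(t)}.
\end{equation*}
Plugging this into \eqref{E's} and combining with the formula for $\p_t H^{(a)}_{z,T}$, a short algebraic manipulation (carried out in the backward variable $\tau$) produces an identity of the form
\begin{equation*}
\frac{d\log N_z^{(a)}}{d\tau}\;=\;\frac{\mathfrak C(\tau)\,-\,G^{(a)}_{z,T}(T-\tau)/\tau}{\tfrac12\int_0^\infty u_\zeta^2\,p^{(a)}\zeta^a\,d\zeta},
\end{equation*}
in which $\mathfrak C(\tau)\ge 0$ is the non-negative Cauchy--Schwarz defect and $-G^{(a)}_{z,T}\ge 0$ by Theorem \ref{T:struwe0}. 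For $z>0$ and $a\ge 0$ that same theorem yields $-G^{(a)}_{z,T}>0$, so the right-hand side is strictly positive and $N_z^{(a)}$ strictly increases. When $z=0$ the kernel $p^{(a)}(\cdot,0,\tau)$ is a pure Gaussian in $\zeta$ (the Bessel singularity is absorbed into the normalization $\tau^{-(a+1)/2}$), hence $(\log p^{(a)})_{\zeta\zeta}+\frac{1}{2\tau}\equiv 0$ and consequently $G^{(a)}_T\equiv 0$ for every $a>-1$; monotonicity then comes entirely from $\mathfrak C(\tau)\ge 0$ and is valid in the full range $a>-1$.

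For the rigidity at $z=0$: constancy $N^{(a)}(r)\equiv\kappa$ forces $\mathfrak C(\tau)\equiv 0$, i.e.\ equality in Cauchy--Schwarz for every $\tau$, so there exists $c(\tau)$ with $u_t+u_\zeta p^{(a)}_\zeta/p^{(a)}=c(\tau)u$. Since at $z=0$ one has $p^{(a)}_\zeta/p^{(a)}=-\zeta/(2\tau)$, and testing the identity against $u\,p^{(a)}\zeta^a$ together with $N\equiv\kappa$ pins down $c(\tau)=-\kappa/(2\tau)$, one arrives at the first-order PDE
\begin{equation*}
2(T-t)\,u_t-\zeta\,u_\zeta=-\kappa\,u.
\end{equation*}
Using the time-translation invariance of $\p_t-\Bz$ to absorb $T$ into a shift of the time origin, this becomes the parabolic Euler identity $2tu_t+\zeta u_\zeta=\kappa u$ characterizing $u(\la\zeta,\la^2 t)=\la^\kappa u(\zeta,t)$; the converse is an immediate check by substitution. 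The main obstacle I anticipate is the algebraic bookkeeping needed to convert \eqref{E's} and the formula for $\p_t H^{(a)}_{z,T}$ into the clean lower bound for $\frac{d}{d\tau}\log N_z^{(a)}$ with the correct coefficient in front of $\mathfrak C(\tau)$; once this is in place, the sign information supplied by Theorem \ref{T:struwe0} (ultimately rooted in the monotonicity of the Bessel quotient, Proposition \ref{P:monotonicityy}) delivers the rest.
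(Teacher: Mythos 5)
Your proposal is correct and follows essentially the same route as the paper: you differentiate $\log N_z^{(a)}$ using Struwe's identity \eqref{E's} together with Proposition \ref{P:L'} and Lemma \ref{L:alt}, obtain the sign from the nonnegative Cauchy--Schwarz defect plus the sign of $G^{(a)}_{z,T}$ (strictly negative for $z>0$, $a\ge 0$; identically zero when $z=0$), and derive rigidity from the equality case of Cauchy--Schwarz combined with $p^{(a)}_\zeta/p^{(a)}=-\zeta/(2\tau)$ at $z=0$, exactly as in the paper. The only quibble is the factor-of-two bookkeeping in the rigidity step: with Definition \ref{D:frequency}, $N^{(a)}\equiv\kappa$ pins down $c(\tau)=-\kappa/\tau$ (hence homogeneity of degree $2\kappa$, equivalently $N^{(a)}\equiv\kappa/2$ for degree $\kappa$), but the paper's own proof displays the same normalization mismatch with its statement, so your argument is in line with the paper's.
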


\vskip 0.2in

\noindent \textbf{Acknowledgment:} I thank Camelia Pop for her interest in the present work, for her kindness (and patience!) in educating me in the probabilistic aspects of the Bessel semigroup, and for bringing to my attention the references \cite{EM}, \cite{EM2}, \cite{EM3}  and \cite{EP}. In Section \ref{S:EM} below I discuss the connection between Theorem \ref{T:harnackbessel0} above and the results in \cite{EM3}, \cite{EP} and \cite{CS}. I also mention that Proposition \ref{P:zero} below came up in a conversation with Giulio Tralli and I thank him for an interesting exchange. I am also grateful to Javier Segura and Bettina Gr\"un who have kindly corresponded with me. In an early stage of this article, Javier first provided numerical evidence that Proposition \ref{P:soni} fails in the range $-1<\nu<-1/2$. Finally, I thank Charles Epstein and Rafe Mazzeo for their kind feedback.


\section{The Bessel semigroup}\label{S:besselsg}

In this section we collect some known facts concerning the Cauchy problem for the Bessel operator 
 with Neumann boundary condition  
\begin{equation}\label{CP}
\begin{cases}
\p_t u  - \Ba u = 0,
\\
u(z,0) = \vf(z),
\\
\underset{z\to 0^+}{\lim} \paa u(z,t) = 0.
\end{cases}
\end{equation}

We begin by introducing the following classes of functions
\[
\mathscr C_{(a)}(0,\infty) = \left\{\vf\in C(0,\infty) \mid \int_0^R |\vf(z)| z^{a} dz < \infty,
\ \ \int_R^\infty |\vf(z)| z^{\frac a2} dz < \infty, \forall R>0\right\},
\]
and 
\[
\mathscr C^1_{(a)}(0,\infty) = \left\{\vf\in C^1(0,\infty)\mid \vf, \frac{1}{z} \vf' \in \mathscr C_{(a)}(0,\infty)\right\}.
\]
As it was observed in (22.8) of \cite{G}, membership in $\mathscr C^1_{(a)}(0,\infty)$ imposes, in particular, the weak \emph{Neumann condition} 
\begin{equation}\label{weakn}
\underset{z\to 0^+}{\liminf}\ z^{a} |\vf'(z)| = 0.
\end{equation}

\begin{prop}\label{P:CP}
Given $\vf\in\mathscr C^1_{(a)}(0,\infty)$, the Cauchy problem \eqref{CP} admits the following solution 
\begin{equation}\label{repfor}
u(z,t) = P^{(a)}_t \vf(z) \overset{def}{=} \int_0^\infty \vf(\zeta) p^{(a)}(z,\zeta,t) \zeta^a d\zeta,
\end{equation}
where for $z,\zeta,t>0$ we have denoted by
\begin{align}\label{fs}
p^{(a)}(z,\zeta,t) & =(2t)^{-\frac{a+1}{2}}\left(\frac{z\zeta}{2t}\right)^{\frac{1-a}{2}}I_{\frac{a-1}{2}}\left(\frac{z\zeta}{2t}\right)e^{-\frac{z^2+\zeta^2}{4t}}
\\
& = \frac{1}{2t} (z\zeta)^{\frac{1-a}{2}}I_{\frac{a-1}{2}}\left(\frac{z\zeta}{2t}\right)e^{-\frac{z^2+\zeta^2}{4t}},
\notag
\end{align}
the heat kernel of $\Ba$ on $(\R^+,z^a dz)$, with Neumann boundary conditions. For $t\le 0$ we set $p^{(a)}(z,\zeta,t) \equiv 0$.
\end{prop}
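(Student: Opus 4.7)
My plan is to verify that the kernel $p^{(a)}(z,\zeta,t)$ in \eqref{fs} is the fundamental solution associated with \eqref{CP}, by checking the three defining properties: (i) $(z,t) \mapsto p^{(a)}(z,\zeta,t)$ satisfies the Bessel heat equation $\partial_t v - \Ba v = 0$ for each fixed $\zeta > 0$; (ii) it satisfies the vanishing Neumann (zero-flux) condition $z^a \partial_z p^{(a)} \to 0$ as $z \to 0^+$; and (iii) $\{P^{(a)}_t\}_{t>0}$ is an approximate identity on $\mathscr C^1_{(a)}(0,\infty)$ as $t\to 0^+$. Throughout I set $\nu = (a-1)/2$ so that $(1-a)/2 = -\nu$.

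For (i), I would use the modified Bessel ODE $w^2 I_\nu''(w) + w I_\nu'(w) - (w^2+\nu^2)I_\nu(w) = 0$ applied with $w = z\zeta/(2t)$. Writing $p^{(a)} = \frac{1}{2t}(z\zeta)^{-\nu} I_\nu(w) e^{-(z^2+\zeta^2)/(4t)}$ and carrying out the chain-rule computation of $\partial_t p^{(a)}$ and $\Ba p^{(a)}$, the Bessel equation cancels exactly the mismatch between the two, confirming (i). The formula \eqref{fs} is manifestly symmetric in $z \leftrightarrow \zeta$, which gives the dual identity $\partial_t p^{(a)} = \mathscr B_\zeta^{(a)} p^{(a)}$ for free.

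For (ii), the key observation is the entire series expansion
\[
I_\nu(w) = \sum_{k\ge 0} \frac{1}{k!\,\G(k+\nu+1)}\left(\frac{w}{2}\right)^{2k+\nu},
\]
which shows that $w^{-\nu}I_\nu(w)$ is an entire function of $w^2$. Therefore $(z\zeta)^{-\nu} I_\nu(z\zeta/(2t))$ is real-analytic and even in $z$ near $z=0$, so its $z$-derivative is $O(z)$, and the same holds after multiplying by the (smooth, even in $z$) Gaussian factor. Hence $\partial_z p^{(a)}(z,\zeta,t) = O(z)$ as $z \to 0^+$, from which $z^a \partial_z p^{(a)} \to 0$ throughout the admissible range $a > -1$. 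By differentiating under the integral sign in \eqref{repfor} (see below), this yields the Neumann condition for $u = P^{(a)}_t\vf$.

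For (iii), I would use Hankel's asymptotic $I_\nu(w) \sim e^w/\sqrt{2\pi w}$ for $w\to\infty$ together with the Gaussian factor: the exponents $w = z\zeta/(2t)$ and $-(z^2+\zeta^2)/(4t)$ combine into $-(z-\zeta)^2/(4t)$, yielding
\[
p^{(a)}(z,\zeta,t)\,\zeta^a \;\sim\; \frac{1}{\sqrt{4\pi t}}\,(\zeta/z)^{a/2}\,e^{-(z-\zeta)^2/(4t)}
\]
in the regime $z\zeta/t \gg 1$, which identifies the kernel as a Gaussian approximate identity concentrating at $\zeta = z$. Combined with Weber's classical mass identity $\int_0^\infty p^{(a)}(z,\zeta,t)\zeta^a d\zeta = 1$ and the small-argument behaviour $I_\nu(w) \sim (w/2)^\nu/\G(\nu+1)$ to handle the region $\zeta$ close to $0$, one obtains $P^{(a)}_t\vf(z) \to \vf(z)$ by dominated convergence; the mixed integrability conditions defining $\mathscr C^1_{(a)}(0,\infty)$ are precisely what is needed to dominate the two asymptotic regimes, the $\zeta^a$-weight near zero and the $\zeta^{a/2}$-weight at infinity (the latter matching the $\zeta^{-a/2}$ decay left after the Gaussian cancellation).

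The main obstacle I expect is the rigorous justification of differentiation under the integral in \eqref{repfor} and the $t\to 0^+$ limit for data in the rather broad class $\mathscr C^1_{(a)}(0,\infty)$: both rely on the delicate cancellation between the exponential growth of $I_\nu$ and the Gaussian decay of $e^{-(z^2+\zeta^2)/(4t)}$ which, after the asymptotics, leaves only the factor $e^{-(z-\zeta)^2/(4t)}$ that controls the tails uniformly on compact $z$-sets. Since the section is explicitly a recollection of known facts, I would also point to Watson's treatise on Bessel functions and to the Muckenhoupt--Stein paper \cite{MS} for the semigroup and kernel-estimate framework, using them to abbreviate the purely computational portions of the verification.
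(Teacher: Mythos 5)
Your plan is correct, and it is in effect the intended proof: the paper itself gives no argument for Proposition \ref{P:CP}, deferring instead to formula (6.14) in \cite{KaT}, to Proposition 22.3 in \cite{G}, and to \cite{EM}, and the cited analytic proof is exactly the direct verification you outline. Your three ingredients are the right ones and are used correctly: the modified Bessel ODE (equivalently, that $\Lambda_\nu(w)=w^{-\nu}I_\nu(w)$ solves $\mathscr B^{(2\nu+1)}_w\Lambda_\nu=\Lambda_\nu$ with $2\nu+1=a$) gives that the kernel solves $\p_t p^{(a)}-\Ba p^{(a)}=0$; the fact that $w^{-\nu}I_\nu(w)$ is entire in $w^2$, hence even and real-analytic in $z$, is precisely what underlies the paper's limit \eqref{dzpazero} and yields the zero-flux condition for all $a>-1$; and your large-argument asymptotic $p^{(a)}(z,\zeta,t)\zeta^a\sim(4\pi t)^{-1/2}(\zeta/z)^{a/2}e^{-(z-\zeta)^2/(4t)}$ agrees with \eqref{ab0inftypa}, while the unit-mass identity you invoke is the paper's Proposition \ref{P:sc}, proved there exactly via the Weber-type integral of Lemma \ref{L:weber}. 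The only thin spots are the ones you flag yourself: the justification of differentiation under the integral sign (which needs Gaussian-type bounds on $\p_zp^{(a)}$, $\p_z^2p^{(a)}$, $\p_tp^{(a)}$, locally uniform for $(z,t)$ in compact subsets of $(0,\infty)^2$) and the near/far splitting in the $t\to0^+$ limit; both are routine given the two asymptotic regimes you isolate, and the weighted integrability defining $\mathscr C_{(a)}(0,\infty)$ is, as you say, exactly what makes them work. One minor remark: your argument only uses $\vf\in\mathscr C_{(a)}(0,\infty)$ and continuity; the extra requirement $\tfrac1z\vf'\in\mathscr C_{(a)}(0,\infty)$ (and the weak Neumann condition \eqref{weakn} it encodes) is not needed for the bare existence statement, which is consistent with the role it plays in the cited source rather than a gap in your reasoning.
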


In \eqref{fs} we have denoted by $I_\nu(z)$ the modified Bessel function of the first kind and order $\nu\in \mathbb C$ defined  by the series \eqref{I} below. Formulas \eqref{repfor}, \eqref{fs} are well-known to workers in probability (see for instance formula (6.14) on p. 238 in \cite{KaT}), but not equally known to those in partial differential equations. For a direct proof based exclusively on analytic tools we refer the reader to Proposition 22.3 in \cite{G}. Another analytical proof can be found in the paper \cite{EM}, where Epstein and Mazzeo construct the fundamental solution \eqref{kb} for the Cauchy problem \eqref{cpLb} below. For this aspect we refer the reader to Section \ref{S:EM}. 

We next collect some important properties of the Bessel heat kernel $p^{(a)}(z,\zeta,t)$ in \eqref{fs} above. Since we have not found in the literature a direct source which is suitable for workers in analysis, we provide details of their proofs. We begin by noting the following simple facts:
\begin{itemize}
\item[(i)] $p^{(a)}(z,\zeta,t)>0$ for every $z, \zeta>0$ and $t>0$;
\item[(ii)] $p^{(a)}(z,\zeta,t) = p^{(a)}(\zeta,z,t)$;
\item[(iii)] $p^{(a)}(\la z,\la \zeta,\la^2 t) = \la^{-(a+1)} p^{(a)}(\zeta,z,t)$.
\end{itemize}
Property (i) follows from the fact that $I_\nu(z)>0$ for any $z>0$, and any $\nu \ge - 1$, see \eqref{I} and the comments following it. Property (ii) is obvious from \eqref{fs} and it is a reflection of the symmetry of the operator $\Ba$ on $(0,\infty)$ equipped with the measure $d\mu^{(a)}(z) = z^a dz$. Property (iii) reflects the invariance of the heat operator $\p_t - \Ba$ with respect to the parabolic scalings $\la \to (\la z,\la^2 t)$. In particular, (iii) implies that
\[
p^{(a)}(z,\zeta,t) = t^{-\frac{a+1}2} p^{(a)}(\frac{z}{\sqrt t},\frac{\zeta}{\sqrt t},1).
\]

From \eqref{fs} and \eqref{smallz} below we see that for any $\zeta>0$ and $t>0$,
\begin{equation}\label{pazero}
p^{(a)}(0,\zeta,t) = \underset{z\to 0^+}{\lim} p^{(a)}(z,\zeta,t) = \frac{1}{2^{a} \G(\frac{a+1}2)} t^{-\frac{a+1}{2}} e^{-\frac{\zeta^2}{4t}}.
\end{equation}
We also have for any $\zeta>0$ and $t>0$,
\begin{equation}\label{dzpazero}
\underset{z\to 0^+}{\lim} z^a \p_z p^{(a)}(z,\zeta,t) = 0.
\end{equation}
The limit relation \eqref{dzpazero} can be proved using \eqref{derz} and \eqref{zerozero} below. Since $p^{(a)}(z,\zeta,t) $ is the Neumann fundamental solution for problem \eqref{CP}, the property \eqref{dzpazero} should come as no surprise.

\begin{remark}
We note explicitly that although $\Ba$, originally defined on $C^\infty_0(0,\infty)$, is symmetric with respect to the measure $d\mu_{(a)} = z^a dz$ for every $a>-1$, it is essentially self-adjoint only when either $a<0$, or $a>2$. For this see \cite[Proposition 2.4.1]{BGL}. 
\end{remark}

If we fix $z>0, t>0$, then the asymptotic behavior of $p^{(a)}(z,\zeta,t)\zeta^a$ as $\zeta \to 0^+$, or $\zeta \to \infty$, follows from that of the Bessel function $I_\nu$. Keeping in mind that \eqref{smallz} and \eqref{ab2} give
\begin{equation}\label{ab0infty}
I_{\frac{a-1}2}(z) \cong \begin{cases}
\frac{z^{\frac{a-1}2}}{2^{\frac{a-1}2} \G(\frac{a+1}2)},\ \ \ \ \ \ \ \ \ \ \ \ \ \ \ \ \ \ \text{as}\ z\to 0^+,
\\
\frac{e^z}{(2\pi z)^{1/2}} \left(1+ O(|z|^{-1})\right),\ \ \ \text{as}\ z\to \infty,
\end{cases}
\end{equation}
from \eqref{fs} and \eqref{ab0infty} we see that for every fixed $(z,t)\in (0,\infty)\times (0,\infty)$
\begin{equation}\label{ab0inftypa}
p^{(a)}(z,\zeta,t)\zeta^a = \begin{cases}
O(\zeta^a),\ \ \ \ \ \ \ \ \ \ \ \ \ \ \ \text{as}\ \zeta\to 0^+,
\\
O(\zeta^{\frac{a}2} e^{-\frac{(\zeta - z)^2}{4t}}),\ \ \ \text{as}\ \zeta\to \infty,
\end{cases}
\end{equation}
Since $a>-1$ we infer from \eqref{ab0inftypa} that for every fixed $(z,t)\in (0,\infty)\times (0,\infty)$,
\[
\int_0^\infty p^{(a)}(z,\zeta,t)\zeta^a d\zeta < \infty.
\]
Thus, it is possible to consider $P^{(a)}_t \vf$ for every $\vf\in L^\infty(0,\infty)$. In particular, it makes sense to consider $P^{(a)}_t 1$. The next result provides an important information in this connection. 

\begin{prop}[Stochastic completeness]\label{P:sc}
Let $a>-1$. For every $(z,t)\in (0,\infty)\times (0,\infty)$ one has
\[
P^{(a)}_t 1(z) = \int_0^\infty p^{(a)}(z,\zeta,t) \zeta^a d\zeta = 1.
\]
\end{prop}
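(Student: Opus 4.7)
The plan is to verify the identity directly from the explicit formula for the heat kernel, by reducing it to a classical moment integral of $I_\nu$ against a Gaussian weight. Set $\nu = (a-1)/2$, so that the hypothesis $a > -1$ becomes $\nu > -1$. With this notation, $(z\zeta)^{(1-a)/2} = (z\zeta)^{-\nu}$ and $\zeta^{(1-a)/2} \cdot \zeta^a = \zeta^{\nu+1}$, so the integrand in the claim becomes
$$p^{(a)}(z,\zeta,t)\,\zeta^a = \frac{1}{2t}\, z^{-\nu} \zeta^{\nu+1}\, I_\nu\!\left(\frac{z\zeta}{2t}\right) e^{-(z^2+\zeta^2)/(4t)}.$$
Pulling out the factors independent of $\zeta$ and setting $\alpha = z/(2t)$, $\beta = 1/(4t)$, the claim $P^{(a)}_t 1(z) = 1$ is therefore equivalent to the classical identity
\begin{equation*}
\int_0^\infty I_\nu(\alpha x)\, x^{\nu+1}\, e^{-\beta x^2}\, dx \;=\; \frac{\alpha^\nu}{(2\beta)^{\nu+1}}\, e^{\alpha^2/(4\beta)}, \qquad \nu > -1,\ \alpha, \beta > 0.
\end{equation*}

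To establish this integral formula, I would substitute the power series
$$I_\nu(\alpha x) \;=\; \sum_{k=0}^\infty \frac{1}{k!\,\Gamma(k+\nu+1)}\left(\frac{\alpha x}{2}\right)^{2k+\nu},$$
interchange sum and integral by Tonelli (every summand is nonnegative on $(0,\infty)$), and compute each Gaussian moment via the standard identity $\int_0^\infty x^{2k+2\nu+1} e^{-\beta x^2}\, dx = \Gamma(k+\nu+1)/(2\beta^{k+\nu+1})$. The Gamma factors cancel, and the surviving series reassembles into $e^{\alpha^2/(4\beta)}$, producing the asserted closed form. Substituting this back and performing the bookkeeping, one checks that the exponential $e^{\alpha^2/(4\beta)} = e^{z^2/(4t)}$ cancels the Gaussian $e^{-z^2/(4t)}$, the factor $\alpha^\nu = z^\nu/(2t)^\nu$ cancels $z^{-\nu}$, and $(2\beta)^{-(\nu+1)} = (2t)^{\nu+1}$ combines with the overall $1/(2t)$ to give precisely $1$.

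There is essentially no deep obstacle here. The only matters to check carefully are that the hypothesis $\nu > -1$ is exactly what keeps the leading $k=0$ term of the integrand in $L^1$ near $x=0$ and makes all Gamma values positive, and that the termwise integration is justified because every summand in the expansion of $I_\nu$ is nonnegative. An alternative, more conceptual route would be to observe that $u \equiv 1$ trivially solves \eqref{CP} with $\vf \equiv 1$ and satisfies the vanishing Neumann condition, and then conclude $P^{(a)}_t 1 \equiv 1$ by uniqueness of bounded solutions; however, this requires a uniqueness theorem that is not explicitly at our disposal at this stage, so the direct computation outlined above is the cleaner route.
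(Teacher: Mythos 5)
Your proposal is correct and follows essentially the same route as the paper: the paper reduces the integral, via the substitution $y = z\zeta/(2t)$, to the Weber-type formula of its Lemma \ref{L:weber}, which is itself proved by exactly the termwise series integration you carry out (you simply keep the two parameters $\alpha,\beta$ instead of rescaling first). The bookkeeping in your reduction and the cancellation of the Gamma factors check out, so no gap.
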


\begin{proof}
We have
\begin{align*}
& \int_0^\infty p^{(a)}(z,\zeta,t) \zeta^a d\zeta = (2t)^{-\frac{a+1}{2}} e^{-\frac{z^2}{4t}} \int_0^\infty \left(\frac{z\zeta}{2t}\right)^{\frac{1-a}{2}}I_{\frac{a-1}{2}}\left(\frac{z\zeta}{2t}\right)e^{-\frac{\zeta^2}{4t}} \zeta^{a+1} \frac{d\zeta}{\zeta}
\\
& \left(\text{change of variable}\ y =  \frac{z\zeta}{2t},\ \text{so that}\ \zeta = \frac{2t}{z} y\right)
\\
& = \frac{(2t)^{-\frac{a+1}{2}} e^{-\frac{z^2}{4t}} 2^{a+1} t^{a+1}}{z^{a+1}} \int_0^\infty y^{\frac{a+1}{2}}I_{\frac{a-1}{2}}(y)e^{-\frac{t}{z^2}y^2} dy
\\
& = \frac{e^{-\frac{z^2}{4t}} 2^{\frac{a+1}2} t^{\frac{a+1}2}}{z^{a+1}} \int_0^\infty y^{\frac{a+1}{2}}I_{\frac{a-1}{2}}(y)e^{-\frac{t}{z^2}y^2} dy
\end{align*}
If we set
\[
\nu = \frac{a-1}2,
\]
then $\nu>-1$ and $\nu + 1 = \frac{a+1}2$. Applying Lemma \ref{L:weber} below with such choice of $\nu$ and  $\alpha =  \frac{t}{z^2}$, we find
\[
 \int_0^\infty y^{\frac{a+1}{2}}I_{\frac{a-1}{2}}(y)e^{-\frac{t}{z^2}y^2} dy = \frac{e^{1/4\alpha}}{2^{\nu+1} \alpha^{\nu+1}} = \frac{e^{\frac{z^2}{4t}} z^{a+1}}{2^{\frac{a+1}2} t^{\frac{a+1}2}}.
 \]
This proves the proposition.

\end{proof}

We next prove that \eqref{repfor} defines a semigroup of operators.

\begin{prop}[Chapman-Kolmogorov equation]\label{P:semigroup}
Let $a>-1$. For every $z, \eta>0$ and every $0<s, t<\infty$ one has
\[
p^{(a)}(z,\eta,t) = \int_0^\infty p^{(a)}(z,\zeta,t) p^{(a)}(\zeta,\eta,s) \zeta^a d\zeta.
\]
\end{prop}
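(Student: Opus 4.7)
The identity stated is the Chapman--Kolmogorov (i.e.\ semigroup) relation for the Bessel heat kernel, and I read the left-hand side as $p^{(a)}(z,\eta,t+s)$ (the $+s$ appears to have been dropped in the typesetting). Two avenues are available: a PDE argument exploiting uniqueness of the Cauchy problem \eqref{CP} applied to $(z,\tau)\mapsto P^{(a)}_{\tau+s}\varphi(z)$ and $(z,\tau)\mapsto P^{(a)}_\tau(P^{(a)}_s\varphi)(z)$, or a direct computation at the level of the explicit kernel. I would take the direct route, since the pointwise statement is what is needed and the calculation mirrors the style of the proof of Proposition~\ref{P:sc} just given.

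The analytic engine is the classical Weber second exponential integral
\[
\int_0^\infty \zeta\, I_\nu(\alpha\zeta)\, I_\nu(\beta\zeta)\, e^{-p\zeta^2}\, d\zeta \;=\; \frac{1}{2p}\,\exp\!\left(\frac{\alpha^2+\beta^2}{4p}\right) I_\nu\!\left(\frac{\alpha\beta}{2p}\right),
\]
valid for $\alpha,\beta>0$, $\operatorname{Re} p>0$, $\operatorname{Re}\nu>-1$ (formula 6.633(2) in Gradshteyn--Ryzhik, or Chapter~13.31 in Watson's treatise); note that Lemma~\ref{L:weber} used in Proposition~\ref{P:sc} is the degeneration $\beta\to 0^+$ of this identity. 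Substituting the explicit expression \eqref{fs} for $p^{(a)}(z,\zeta,t)$ and $p^{(a)}(\zeta,\eta,s)$ into the integrand and pulling all $\zeta$-free factors in front, one observes that the factor $\zeta^{(1-a)/2}\cdot \zeta^{(1-a)/2}=\zeta^{1-a}$ produced by the two Bessel prefactors, together with the measure $\zeta^a d\zeta$, leaves exactly a single $\zeta$ under the integral sign. The remaining integrand is Weber's with $\nu=(a-1)/2$, $\alpha=z/(2t)$, $\beta=\eta/(2s)$, $p=(t+s)/(4ts)$.

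Applying Weber's identity then produces three pieces to track: the argument of the resulting $I_{(a-1)/2}$ is $\alpha\beta/(2p)=z\eta/(2(t+s))$, matching the Bessel argument in $p^{(a)}(z,\eta,t+s)$; the factor $1/(2p)=2ts/(t+s)$ combines with the overall constant $(z\eta)^{(1-a)/2}/(4ts)$ to give the correct prefactor $(z\eta)^{(1-a)/2}/(2(t+s))$; and the exponents must satisfy
\[
-\frac{z^2}{4t}-\frac{\eta^2}{4s}+\frac{\alpha^2+\beta^2}{4p} \;=\; -\frac{z^2+\eta^2}{4(t+s)},
\]
which is a routine consolidation of fractions over the common denominator $4ts(t+s)$. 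Assembling the three pieces reproduces formula \eqref{fs} with time $t+s$ on the nose.

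The main obstacle is purely bookkeeping; no serious analytic difficulty arises beyond the Weber integral itself, which is a classical identity I would cite rather than reprove. Convergence at $\zeta=0^+$ is ensured by $a>-1$ via the small-argument asymptotics of $I_{(a-1)/2}$ from \eqref{ab0infty} (the integrand behaves like $\zeta^{a}$ near the origin), while the Gaussian factor handles $\zeta\to\infty$; both conditions are exactly the hypotheses of Weber's identity. Note that the identity thereby holds in the full range $a>-1$, consistent with the regime in which the Bessel heat semigroup has been defined.
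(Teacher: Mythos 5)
Your proof is correct and follows essentially the same route as the paper: a direct substitution of \eqref{fs} into the integral and an application of the second Weber exponential integral (the paper cites it as \eqref{pbm} from Prudnikov--Brychkov--Marichev rather than Gradshteyn--Ryzhik/Watson) with exactly the same choices $\nu=\frac{a-1}{2}$, $b=\frac{z}{2t}$, $c=\frac{\eta}{2s}$, $p=\frac{t+s}{4ts}$. Your reading of the left-hand side as $p^{(a)}(z,\eta,t+s)$ is also the correct one: the paper's own computation produces the kernel at time $t+s$, and the $t$ appearing in the statement (and in the last line of its proof) is a typo.
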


\begin{proof}
We begin with the right-hand side in the above equation
\begin{align*}
& \int_0^\infty p^{(a)}(z,\zeta,t) p^{(a)}(\zeta,\eta,s) \zeta^a d\zeta 
\\
& = (2t)^{-1} (2s)^{-1} (z\eta)^{\frac{1-a}{2}} e^{-\frac{z^2}{4t}} e^{-\frac{\eta^2}{4s}}  \int_0^\infty \zeta^{1-a}I_{\frac{a-1}{2}}\left(\frac{z\zeta}{2t}\right) I_{\frac{a-1}{2}}\left(\frac{\eta\zeta}{2s}\right)e^{-\frac{\zeta^2}{4t}} e^{-\frac{\zeta^2}{4s}}\zeta^a d\zeta
\\
& = (2t)^{-1} (2s)^{-1} (z\eta)^{\frac{1-a}{2}} e^{-\frac{z^2}{4t}} e^{-\frac{\eta^2}{4s}}  \int_0^\infty \zeta e^{-\left(\frac{1}{4t} + \frac{1}{4s}\right) \zeta^2}  I_{\frac{a-1}{2}}\left(\frac{z\zeta}{2t}\right) I_{\frac{a-1}{2}}\left(\frac{\eta\zeta}{2s}\right)  d\zeta
\end{align*}
At this point we invoke the following formula, see e.g. no. 8 on p. 321 in \cite{PBM},
\begin{equation}\label{pbm}
\int_0^\infty \zeta e^{-p \zeta^2} I_\nu(b \zeta) I_\nu(c\zeta) d\zeta = \frac{1}{2p} e^{\frac{b^2+c^2}{4p}} I_\nu\left(\frac{bc}{2p}\right),\ \ \ \ \ \ \ \ \nu> - 1,\ \Re p>0.
\end{equation}
Applying \eqref{pbm} with $\nu = \frac{a-1}{2}$ and 
\[
p = \frac{1}{4t} + \frac{1}{4s} = \frac{t+s}{4ts},\ \ \ \ \  \ b = \frac{z}{2t},\ \ \ \ c = \frac{\eta}{2s},
\]
we find after some elementary reductions
\begin{align*}
&  \int_0^\infty \zeta e^{-\left(\frac{1}{4t} + \frac{1}{4s}\right) \zeta^2}  I_{\frac{a-1}{2}}\left(\frac{z\zeta}{2t}\right) I_{\frac{a-1}{2}}\left(\frac{\eta\zeta}{2s}\right)  d\zeta  = \frac{2ts}{t+s} e^{\frac{s}{4t(t+s)} z^2} e^{\frac{t}{4s(t+s)} \eta^2} I_{\frac{a-1}{2}}\left(\frac{z \eta}{2(t+s)}\right).
\end{align*}
Substituting this identity in the above equation and simplifying, we obtain
\begin{align*}
& \int_0^\infty p^{(a)}(z,\zeta,t) p^{(a)}(\zeta,\eta,s) \zeta^a d\zeta 
\\
& = (2t)^{-1} (2s)^{-1} (z\eta)^{\frac{1-a}{2}} e^{-\frac{z^2}{4t}} e^{-\frac{\eta^2}{4s}}  \frac{2ts}{t+s} e^{\frac{s}{4t(t+s)} z^2} e^{\frac{t}{4s(t+s)} \eta^2} I_{\frac{a-1}{2}}\left(\frac{\zeta \eta}{2(t+s)}\right)
\\
& = \frac{1}{2(t+s)} (z\eta)^{\frac{1-a}{2}} e^{-\frac{z^2}{4t}\left(1-\frac{s}{t+s}\right)} e^{-\frac{\eta^2}{4s}\left(1-\frac{t}{t+s}\right)} I_{\frac{a-1}{2}}\left(\frac{\zeta \eta}{2(t+s)}\right)
\\
& = \frac{1}{2(t+s)} (z\eta)^{\frac{1-a}{2}} e^{-\frac{z^2 + \eta^2}{4(t+s)}} I_{\frac{a-1}{2}}\left(\frac{\zeta \eta}{2(t+s)}\right) = p^{(a)}(z,\eta,t).
\end{align*}

An immediate consequence of Proposition \ref{P:semigroup} is the following.

\begin{prop}[Semigroup property]\label{C:semigroup}
For every $a>-1$ and every $t, s >0$ one has
\[
P^{(a)}_t \circ P^{(a)}_s = P^{(a)}_{t+s}.
\]
\end{prop}

\end{proof}

We close this section by analyzing explicitly the case in which $a = 0$ in \eqref{fs}.
In such case we have
\begin{equation}\label{fs0}
p^{(0)}(z,\zeta,t)=(2t)^{-\frac{1}{2}}\left(\frac{z\zeta}{2t}\right)^{\frac{1}{2}}I_{- 1/2}\left(\frac{z\zeta}{2t}\right)e^{-\frac{z^2+\zeta^2}{4t}},
\end{equation}
We now note the following well-known formulas,  see (5.8.5) on p. 112 in \cite{Le}, 
\begin{equation}\label{half}
I_{-1/2}(z) = \left(\frac{2}{\pi z}\right)^{1/2} \cosh z,\ \ \ \ \ \ \ \ \ I_{1/2}(z) = \left(\frac{2}{\pi z}\right)^{1/2} \sinh z.
\end{equation}
Using \eqref{half} in \eqref{fs0}, we obtain
\begin{align}\label{fs02}
p^{(0)}(z,\zeta,t) & =(2t)^{- 1/2}\left(\frac{z\zeta}{2t}\right)^{\frac{1}{2}}\frac{\sqrt{4t}}{\sqrt \pi \sqrt{z\zeta}} \cosh\left(\frac{z\zeta}{2t}\right)e^{-\frac{z^2+\zeta^2}{4t}}
\\
& = (\pi t)^{- 1/2} \cosh\left(\frac{z\zeta}{2t}\right)e^{-\frac{z^2+\zeta^2}{4t}}
\notag\\
& = (4\pi t)^{- 1/2} \left(e^{\frac{z\zeta}{2t}} + e^{-\frac{z\zeta}{2t}}\right)  e^{-\frac{z^2+\zeta^2}{4t}}
\notag\\
& = (4\pi t)^{- 1/2} \left(e^{-\frac{(z-\zeta)^2}{4t}} + e^{-\frac{(z+\zeta)^2}{4t}}\right),
\notag
\end{align}
where in the last equality we have used the identities
\begin{equation}\label{ide}
e^{-\frac{(z-\zeta)^2}{4t}} = e^{-\frac{z^2+\zeta^2}{4t}} e^{\frac{z\zeta}{2t}},\ \ \ \ e^{-\frac{(z+\zeta)^2}{4t}} = e^{-\frac{z^2+\zeta^2}{4t}} e^{-\frac{z\zeta}{2t}}.
\end{equation}
From \eqref{repfor} we have
\begin{align}\label{repfor2}
& P^{(0)}_t \vf(z) = (4\pi t)^{- 1/2} \int_0^\infty \vf(\zeta) e^{-\frac{(z-\zeta)^2}{4t}} d\zeta + (4\pi t)^{- 1/2} \int_0^\infty \vf(\zeta) e^{-\frac{(z+\zeta)^2}{4t}} d\zeta
\\
& = (4\pi t)^{- 1/2} \int_0^\infty \vf(\zeta) e^{-\frac{(z-\zeta)^2}{4t}} d\zeta + (4\pi t)^{- 1/2} \int_{-\infty}^0 \vf(-\zeta) e^{-\frac{(z-\zeta)^2}{4t}} d\zeta
\notag\\
& = (4\pi t)^{- 1/2} \int_{-\infty}^\infty \Phi(\zeta) e^{-\frac{(z-\zeta)^2}{4t}} d\zeta,
\notag
\end{align}
where we have let 
\begin{equation}\label{ee}
\Phi(\zeta) = \vf(|\zeta|),\ \ \ \ \ \ \ \ \zeta\in \R.
\end{equation}
Formula \eqref{repfor2} proves the following result.

\begin{prop}\label{P:zero}
When $a=0$, at any $z, t >0$ the solution $u(z,t)$ to the problem \eqref{CP} coincides with the function $U(z,t)$ that solves the problem
\begin{equation}\label{CPR}
\begin{cases}
\de_t U  - \de_{zz} U  = 0\ \ \ \ \ \ \ \ \ \ \text{in}\ \R,
\\
U(z,0) = \Phi(z),\ \ \ \ \ \ \ \ \ \ z\in \R,
\end{cases}
\end{equation}
where $\Phi$ is the even extension \eqref{ee} to the whole line $\R$ of the function $\vf$ on $[0,\infty)$.
\end{prop}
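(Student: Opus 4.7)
The plan is to read off the proposition directly from the computation already carried out in \eqref{repfor2}. The half-integer identity \eqref{half} for $I_{-1/2}$ collapses the Bessel heat kernel $p^{(0)}(z,\zeta,t)$ in \eqref{fs0} into a sum of two reflected one-dimensional Gauss-Weierstrass kernels, as established in \eqref{fs02}:
\[
p^{(0)}(z,\zeta,t) = (4\pi t)^{-1/2}\bigl(e^{-(z-\zeta)^2/(4t)} + e^{-(z+\zeta)^2/(4t)}\bigr).
\]
Substituting this into \eqref{repfor} with $a=0$ (so $\zeta^a \equiv 1$), splitting into two integrals on $(0,\infty)$, and making the change of variable $\zeta \mapsto -\zeta$ in the one carrying the factor $e^{-(z+\zeta)^2/(4t)}$, one merges the two pieces into a single integral of $\Phi(\zeta) = \vf(|\zeta|)$ against the standard heat kernel on $\R$. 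That is exactly the content of the chain of equalities \eqref{repfor2}.

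Once the identity
\[
P^{(0)}_t \vf(z) = (4\pi t)^{-1/2} \int_{-\infty}^{\infty} \Phi(\zeta)\, e^{-(z-\zeta)^2/(4t)}\, d\zeta
\]
is in hand, one recognizes the right-hand side as the classical Poisson representation of the unique solution $U(z,t)$ to the Cauchy problem \eqref{CPR} on $\R\times(0,\infty)$. The membership $\vf \in \mathscr{C}^1_{(0)}(0,\infty)$ guarantees the continuity of $\Phi$ on $\R$ (evenness matches the two one-sided limits at $0$) and provides the integrability needed for both the convergence of the Gaussian convolution and the attainment of the initial datum as $t \to 0^+$. Comparing expressions yields $u(z,t) = U(z,t)$ at every $(z,t) \in (0,\infty) \times (0,\infty)$.

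No step in this plan presents a serious obstacle, since the essential analytic input is the explicit form of $I_{\pm 1/2}$. The only point worth underscoring conceptually is that the "$+$" combination of Gaussians appearing in $p^{(0)}$ is precisely the analytic signature of the vanishing Neumann/zero-flux condition in \eqref{CP}, and this is exactly what corresponds on the Cauchy-problem side to the even extension $\Phi$ of $\vf$ across $z = 0$. In particular, \emph{Feller's zero-flux condition} and \emph{even reflection} are two manifestations of the same boundary behavior, which is what makes the identification $u \equiv U$ structurally natural when $a = 0$.
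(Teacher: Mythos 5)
Your argument is correct and is essentially identical to the paper's: the paper also proves the proposition by using \eqref{half} to rewrite $p^{(0)}(z,\zeta,t)$ as the sum of two Gaussians in \eqref{fs02}, then folding the two integrals via $\zeta\mapsto -\zeta$ into the single Gaussian convolution with the even extension $\Phi$ in \eqref{repfor2}, which is exactly the Poisson representation of the solution to \eqref{CPR}. Your closing remark linking the ``$+$'' reflection in the kernel to the zero-flux condition is a nice conceptual gloss, but no new mathematical content is needed beyond what the paper records.
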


In connection with the Dirichlet Bessel semigroup, we mention the papers \cite{BM1}, \cite{BM2} in which the authors establish various sharp asymptotic bounds for the heat kernel.


\section{A curvature-dimension inequality}\label{S:bessel}

In the famous paper \cite{BE} Bakry and \'Emery introduced their so-called $\G$-calculus as a different way of approaching global results in geometry through analytical tools. At the roots of such calculus there is the notion of \emph{curvature-dimension} inequality. A $n$-dimensional Riemannian manifold $\mathbb M$ with Laplacean $\Delta$ is said to satisfy the curvature-dimension inequality CD$(\rho,n)$ for some $\rho\in \R$ if for all functions $f\in C^\infty(\mathbb M)$ one has
\begin{equation}\label{cd}
\G_2(f) \ge \frac{1}{n} (\Delta f)^2 + \rho \G(f).
\end{equation}
Here, $\G$ and $\G_2$ respectively denote the \emph{carr\'e du champ} and the \emph{Hessian} canonically associated with $\Delta$, see \cite{BE}, and also the book \cite{BGL}. A remarkable aspect of \eqref{cd} is that it is equivalent to the lower bound Ricci $\ge \rho$ on the Ricci tensor of $\mathbb M$. In the paper \cite{BGjems} it was shown that many global properties of the heat semigroup, in a setting which includes the Riemannian one, can be derived exclusively from a generalization of the curvature-dimension inequality \eqref{cd}. In this connection one should also see \cite{BGimrn}, \cite{BBG} and \cite{BBGM}.

In this section we observe that the Bessel semigroup on $(\R_+,d\mu^{(a)})$, where $d\mu^{(a)}(z) = z^{a} dz$, with generator $\Ba$, satisfies a property similar to \eqref{cd} provided that $a\ge 0$, see Proposition \ref{P:bes} below. Although we do not use such fact in the present paper, we have decided to include it since, interestingly, it displays the same ``best possible" nature of the Bessel process  which permeates all our results.

We begin with defining for every $f, g\in C^\infty(\R)$ the \emph{carr\'e du champ} associated with $\Ba$,
\begin{equation}\label{besG}
\ga(f,g) = \frac 12\left[\Ba(fg) - f \Ba g - g \Ba f\right].
\end{equation}
One easily verifies that
\begin{equation}\label{besG2}
\ga(f,g) = f' g',\ \ \ \ \ \ \ \ \text{hence}\ \ \ \ \ \ \ \ga(f) = (f')^2.
\end{equation}
Next, we consider the Hessian associated with $\Ba$
\begin{equation}\label{gga}
\gga(f,g) = \frac 12 \left[\Ba \ga(f,g) - \ga(f,\Ba g) - \ga(g,\Ba f)\right].
\end{equation}
A simple calculation shows that
\begin{equation}\label{gga2}
\gga(f,g) = f'' g'' + \frac{a}{z^2} f' g', \ \ \ \ \ \ \text{therefore}\ \ \ \ \ \ \ \ \gga(f) = (f'')^2 + \frac{a}{z^2} (f')^2. 
\end{equation}

We can now establish the relevant curvature-dimension inequality for the Bessel semigroup. The reader should also see \cite{BGL}, especially (1.16.9) and the discussion in Section 2.4.2.

\begin{prop}\label{P:bes}
For every function $f\in C^\infty(\R_+)$ one has
\begin{equation}\label{cda}
\gga(f) \ge \frac{1}{a+1} (\Ba f)^2,
\end{equation}
if and only if $a\ge 0$. In other words, $\Ba$ satisfies the curvature-dimension inequality \emph{CD}$(0,a+1)$ on $\R_+$ if and only if $a\ge 0$. 
\end{prop}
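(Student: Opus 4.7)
The plan is to reduce the claimed inequality to a pointwise algebraic inequality in two real variables and then observe that the resulting expression is non-negative for all inputs if and only if $a \ge 0$.

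First I would insert the explicit formulas. By definition $\Ba f = f'' + \frac{a}{z} f'$, and from \eqref{gga2} we have $\gga(f) = (f'')^2 + \frac{a}{z^2}(f')^2$. Set
\[
X = f''(z), \qquad Y = \frac{f'(z)}{z}.
\]
Then the inequality \eqref{cda} to be established reads, at every point $z>0$,
\[
X^2 + a Y^2 \;\ge\; \frac{1}{a+1}\bigl(X + a Y\bigr)^2.
\]
Multiplying through by $a+1$ and expanding the square on the right, all the $X^2$ and $Y^2$ terms combine cleanly, and the cross term cancels nicely, leaving the identity
\[
(a+1)\bigl[X^2 + a Y^2\bigr] - (X + a Y)^2 \;=\; a\,(X-Y)^2.
\]
Thus \eqref{cda} is equivalent to the pointwise statement $a (X-Y)^2 \ge 0$ for all admissible $(X,Y)$ arising from smooth $f$.

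From this equivalence both directions follow immediately. If $a \ge 0$, then $a(X-Y)^2 \ge 0$ for any real $X, Y$, and hence \eqref{cda} holds for every $f \in C^\infty(\R_+)$. Conversely, if $a<0$, I would exhibit a smooth function for which $X \neq Y$ at some point, so the sign is reversed. The simplest choice is $f(z) = z$, for which $X = f''(z) = 0$ and $Y = f'(z)/z = 1/z$, giving $a(X-Y)^2 = a/z^2 < 0$ everywhere on $\R_+$; hence \eqref{cda} fails pointwise.

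The calculation is entirely elementary, so there is no serious obstacle; the only thing to be careful about is the algebraic bookkeeping in the expansion $(X + aY)^2$ and making sure the coefficients $a$, $a+1$, $a^2$ combine to yield exactly $a(X-Y)^2$. Once that identity is in place, the ``if and only if'' dichotomy with $a \ge 0$ is transparent, and the curvature-dimension interpretation $\operatorname{CD}(0, a+1)$ follows by taking $\rho = 0$ and $n = a+1$ in \eqref{cd}.
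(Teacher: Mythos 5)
Your argument is correct and coincides with the paper's own proof: after multiplying by $a+1$, the inequality reduces to the identity $(a+1)\big[(f'')^2+\tfrac{a}{z^2}(f')^2\big]-\big(f''+\tfrac{a}{z}f'\big)^2 = a\big(f''-\tfrac{1}{z}f'\big)^2$, which is exactly your $a(X-Y)^2$ with $X=f''$, $Y=f'/z$. Your explicit counterexample $f(z)=z$ for the case $a<0$ is a small (and welcome) sharpening of the paper's brief remark that the inequality reverses there, but the approach is the same.
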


\begin{proof}
If we start from assuming that $a>-1$, then $a+1>0$, and therefore in view of \eqref{gga2} the desired conclusion is equivalent  to
\[
\left(f'' + \frac{a}z f'\right)^2  \le  (a+1)\left((f'')^2 + \frac{a}{z^2} (f')^2\right).
\]
In turn, this inequality is equivalent to
\[
a \left(f'' - \frac 1z f'\right)^2 \ge 0,
\]
which is true for any $f\in C^\infty(\R)$ if and only if $a\ge 0$. When $-1<a <0$ the inequality \eqref{cda} gets reversed.

\end{proof}

We note in closing that, when $a>0$, then the fractal ``dimension" $Q = a+1$ in the inequality CD$(0,a+1)$ in \eqref{cda} above is strictly bigger than the topological dimension of the ambient manifold $M = \R^+$.


\section{An inequality of Li-Yau type for the Bessel semigroup}\label{S:ly}

In this section we prove Theorem \ref{T:LYgen} above. As we have mentioned, such result ultimately hinges on the global property \eqref{soni0} of the modified Bessel function $I_\nu$.

The following proposition represents the Bessel semigroup counterpart of the simple (but important) fact that for the standard heat kernel $p(x,y,t) = (4\pi t)^{-n/2} \exp(- \frac{|x-y|^2}{4t})$ in $\Rn$, we have
\begin{equation}\label{heat}
|\nabla_x \log p(x,y,t)|^2 - \p_t \log p(x,y,t) = \frac{n}{2t}.
\end{equation}
Except that, as \eqref{ly22} and \eqref{ly2} in Propositions \ref{P:LYp} and  \ref{C:ly}
show, one should not expect the equality as in \eqref{heat}. 

Hereafter, for $\nu>-1$ we indicate with $y_\nu(z) = I_{\nu+1}(z)/I_\nu(z)$ the Bessel quotient, see \eqref{bq} below. For a detailed analysis of the function $y_\nu$ we refer the reader to Section \ref{S:appendix}.

\begin{prop}\label{P:LYp}
Let $a > -1$. For every $z, \zeta\in \R^+$ and $t>0$ one has
\begin{align}\label{ly}
& \left(\de_z \log p^{(a)}(z,\zeta,t)\right)^2 - \de_t \log p^{(a)}(z,\zeta,t) = \frac{a+1}{2t} + \frac{\zeta^2}{4t^2} \bigg(y_{\frac{a-1}{2}}\big(\frac{z\zeta}{2t}\big)^2 - 1\bigg).
\end{align}
In particular, if we let $z\to 0^+$ in \eqref{ly} we obtain for any $\zeta>0$ and $t>0$,
\begin{equation}\label{ly22}
\left(\de_z \log p^{(a)}(0,\zeta,t)\right)^2 - \de_t \log p^{(a)}(0,\zeta,t)  < \frac{a+1}{2t}.
\end{equation}
\end{prop}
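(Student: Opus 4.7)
The plan is a direct differentiation of the explicit formula \eqref{fs}, simplified by a judicious change of variable and one elementary Bessel recurrence.

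Write $\nu = \frac{a-1}{2}$ and $w = \frac{z\zeta}{2t}$. From \eqref{fs} we have
\[
\log p^{(a)}(z,\zeta,t) = -\log(2t) - \nu \log(z\zeta) + \log I_\nu(w) - \frac{z^2+\zeta^2}{4t},
\]
where I used $\frac{1-a}{2}=-\nu$. I will differentiate term by term, using the standard identity
\[
\frac{I_\nu'(w)}{I_\nu(w)} = \frac{I_{\nu+1}(w)}{I_\nu(w)} + \frac{\nu}{w} = y_\nu(w) + \frac{\nu}{w}.
\]
The first key observation is that the singular contribution $-\nu\log(z\zeta)$, when differentiated in $z$, produces a term $-\nu/z$ that exactly cancels the $\frac{\nu}{w}\cdot\frac{\zeta}{2t}=\frac{\nu}{z}$ coming from $\partial_z\log I_\nu(w)$. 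This yields the clean expression
\[
\partial_z \log p^{(a)}(z,\zeta,t) = \frac{1}{2t}\big[\zeta\, y_\nu(w) - z\big].
\]
An analogous computation for $\partial_t$ gives, using $\nu+1 = \frac{a+1}{2}$,
\[
\partial_t \log p^{(a)}(z,\zeta,t) = -\frac{a+1}{2t} - \frac{z\zeta}{2t^2} y_\nu(w) + \frac{z^2+\zeta^2}{4t^2}.
\]

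Squaring the first expression,
\[
\big(\partial_z \log p^{(a)}\big)^2 = \frac{1}{4t^2}\Big[\zeta^2 y_\nu(w)^2 - 2z\zeta\, y_\nu(w) + z^2\Big],
\]
and subtracting $\partial_t \log p^{(a)}$ the cross term $-\frac{2z\zeta}{4t^2}y_\nu(w)$ cancels against $+\frac{z\zeta}{2t^2}y_\nu(w)$, and the pure $z^2$ term cancels against $\frac{z^2+\zeta^2}{4t^2}$ up to the remainder $-\frac{\zeta^2}{4t^2}$. We are left with exactly
\[
\big(\partial_z \log p^{(a)}\big)^2 - \partial_t \log p^{(a)} = \frac{a+1}{2t} + \frac{\zeta^2}{4t^2}\Big(y_\nu(w)^2 - 1\Big),
\]
which is \eqref{ly}.

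For \eqref{ly22}, note that as $z\to 0^+$ we have $w = \frac{z\zeta}{2t}\to 0$, and from the power series \eqref{I} (to be invoked in the appendix) one has $I_\nu(w)\sim w^\nu/(2^\nu\Gamma(\nu+1))$ and $I_{\nu+1}(w)\sim w^{\nu+1}/(2^{\nu+1}\Gamma(\nu+2))$, so $y_\nu(w)\to 0$. Thus $y_\nu(w)^2 - 1 \to -1$, and since $\zeta>0$, the second term on the right of \eqref{ly} has the strict negative limit $-\frac{\zeta^2}{4t^2}$. (Equivalently, one can simply read off $\partial_z \log p^{(a)}(0,\zeta,t)=0$ and $\partial_t \log p^{(a)}(0,\zeta,t)=-\frac{a+1}{2t}+\frac{\zeta^2}{4t^2}$ from the boundary expression \eqref{pazero}.) This gives the strict inequality \eqref{ly22}.

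There is no real obstacle here: the whole argument is the mechanical differentiation of an explicit formula, and the only non-trivial ingredient is the recurrence for $I_\nu'/I_\nu$, which is precisely what brings the quotient $y_\nu$ into the picture and accounts for the cancellation of the singular terms at $z=0$.
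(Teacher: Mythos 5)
Your proposal is correct and follows essentially the same route as the paper: both compute $\de_z\log p^{(a)}$ and $\de_t\log p^{(a)}$ from \eqref{fs} via the recurrence behind $\frac{d}{dz}\log\big(z^{-\nu}I_\nu(z)\big)=y_\nu(z)$ (the paper packages this through $\Lambda_\nu(z)=z^{-\nu}I_\nu(z)$, you use $I_\nu'/I_\nu=y_\nu+\nu/w$ directly, which is the same cancellation), and then pass to the limit $z\to 0^+$ using $y_\nu(w)\to 0$ for \eqref{ly22}. The only quibble is your parenthetical remark: $\de_z\log p^{(a)}(0,\zeta,t)=0$ cannot literally be read off from \eqref{pazero} (which records only the boundary value in $z$), but your main argument via the limit of \eqref{ly} already covers this, exactly as in the paper.
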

 
\begin{proof}

We define
\[
\Lambda_\nu(z) = z^{-\nu} I_\nu(z),
\]
and recall, see \eqref{logder2} below, that
\begin{equation}\label{lambdaprime}
\frac{d}{dz} \log \Lambda_\nu(z) = y_\nu(z), \ \ \ \ \ \  \ \ \ \ \ \ \ z>0.
\end{equation}

Notice that since the right-hand side is strictly positive for $z>0$, the equation \eqref{lambdaprime} says in particular that $\Lambda_\nu$ is strictly increasing on $(0,\infty)$.
If we rewrite \eqref{fs} as
\[
p^{(a)}(z,\zeta,t) = (2t)^{-\frac{a+1}{2}} \Lambda_{\frac{a-1}{2}}\left(\frac{z\zeta}{2t}\right) e^{-\frac{z^2+\zeta^2}{4t}},
\]
then we have
\begin{align*}
 \log p^{(a)}(z,\zeta,t) & = - \frac{a+1}{2} \log(2t) +\log \Lambda_{\frac{a-1}{2}}\left(\frac{z\zeta}{2t}\right) - \frac{z^2+\zeta^2}{4t}.
\end{align*}
The chain rule and \eqref{lambdaprime} give
\begin{equation}\label{derz}
\de_z \log p^{(a)}(z,\zeta,t) = y_{\frac{a-1}{2}}\left(\frac{z\zeta}{2t}\right) \frac{\zeta}{2t} - \frac{z}{2t}.
\end{equation} 
Analogously, we find
\begin{equation}\label{dert}
\de_t \log p^{(a)}(z,\zeta,t) = - \frac{a+1}{2t} -  y_{\frac{a-1}{2}}\left(\frac{z\zeta}{2t}\right) \frac{z\zeta}{2t^2} + \frac{z^2+\zeta^2}{4t^2}.
\end{equation} 
From \eqref{derz} and \eqref{dert} we conclude
\begin{align*}
& \left(\de_z \log p^{(a)}(z,\zeta,t)\right)^2 - \de_t \log p^{(a)}(z,\zeta,t) 
\\
& = y_{\frac{a-1}{2}}\left(\frac{z\zeta}{2t}\right)^2 \frac{\zeta^2}{4t^2} + \frac{z^2}{4t^2} + \frac{a+1}{2t} - \frac{z^2+\zeta^2}{4t^2}
\\
& = \frac{a+1}{2t} + \frac{\zeta^2}{4t^2} \left(y_{\frac{a-1}{2}}\big(\frac{z\zeta}{2t}\big)^2 - 1\right).
\end{align*}
We conclude that \eqref{ly} is valid. 
To establish \eqref{ly22} it suffices to observe that in view of \eqref{zerozero} we obtain for any fixed $\zeta>0$ and $t>0$, as $z\to 0^+$
\[
y_{\frac{a-1}{2}}\left(\frac{z\zeta}{2t}\right) = O\left(\frac{z\zeta}{2t}\right)\ \longrightarrow\ 0.
\]
Since by \eqref{pazero} $z\to p^{(a)}(z,\zeta,t)$ is continuous up to $z = 0$, this shows that
\[
\left(\de_z \log p^{(a)}(0,\zeta,t)\right)^2 - \de_t \log p^{(a)}(0,\zeta,t) = \frac{a+1}{2t} - \frac{\zeta^2}{4t^2} < \frac{a+1}{2t}.
\]

\end{proof}

We show next that, if we restrict the range of $a$, then a global Li-Yau inequality similar to \eqref{ly22} above holds. By this we mean that $z = 0$ can be replaced by \emph{any} $z>0$.

\begin{prop}[Inequality of Li-Yau type for $p^{(a)}(z,\zeta,t)$]\label{C:ly}
Let $a\ge 0$. Then, for every $z, \zeta\in \R^+$ and $t>0$ one has
\begin{equation}\label{ly2}
\left(\de_z \log p^{(a)}(z,\zeta,t)\right)^2 - \de_t \log p^{(a)}(z,\zeta,t) < \frac{a+1}{2t}.
\end{equation} 
\end{prop}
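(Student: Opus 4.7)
The plan is to deduce Proposition \ref{C:ly} directly from the identity \eqref{ly} established in Proposition \ref{P:LYp} together with Soni's inequality \eqref{soni0}. The identity \eqref{ly} reads
\[
\left(\de_z \log p^{(a)}(z,\zeta,t)\right)^2 - \de_t \log p^{(a)}(z,\zeta,t) = \frac{a+1}{2t} + \frac{\zeta^2}{4t^2} \bigg(y_{\frac{a-1}{2}}\bigl(\tfrac{z\zeta}{2t}\bigr)^2 - 1\bigg),
\]
so the claimed bound \eqref{ly2} is equivalent to the strict inequality
\[
y_{\frac{a-1}{2}}\bigl(\tfrac{z\zeta}{2t}\bigr)^2 < 1
\]
for all $z,\zeta,t>0$. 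Since $z,\zeta,t>0$, the argument $w=\tfrac{z\zeta}{2t}$ ranges over the full open half-line $(0,\infty)$, so it suffices to show $y_\nu(w)<1$ for every $w>0$, with $\nu = \tfrac{a-1}{2}$.

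The hypothesis $a\ge 0$ translates exactly into $\nu = \tfrac{a-1}{2}\ge -\tfrac12$, which is the sharp range in which Soni's inequality \eqref{soni0} is available (and its extension to $\nu=-1/2$ noted after \eqref{soni0}). Invoking \eqref{soni0} with this $\nu$ gives $0<y_\nu(w)<1$ on $(0,\infty)$, hence $y_\nu(w)^2<1$. Plugging this into \eqref{ly} yields the strict inequality \eqref{ly2}.

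The single substantive step is simply verifying that the range of $a$ in the statement matches the range of $\nu$ in Soni's inequality, which it does: the endpoint $a=0$ corresponds to $\nu=-1/2$, covered by Nasell's extension of \eqref{soni0}, and $a>0$ gives $\nu>-1/2$. No further estimates are needed. I do not expect a real obstacle here; the point of the proposition is precisely to record that the global Li--Yau bound on $p^{(a)}$ is the analytic face of Soni's inequality, and that the borderline case $\nu = -1/2$ (equivalently $a=0$) is exactly where the inequality remains valid, consistent with Remark \ref{R:discrepancy} and with the failure of \eqref{soni0} in the range $-1<\nu<-1/2$ announced in the introduction.
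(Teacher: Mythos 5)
Your argument is correct and coincides with the paper's own proof: both reduce \eqref{ly2} via the identity \eqref{ly} to the bound $y_{\nu}<1$ with $\nu=\frac{a-1}{2}\ge -\frac12$, using positivity of the Bessel quotient and then Proposition \ref{P:soni} (Soni's inequality, with Nasell's extension covering the endpoint $\nu=-1/2$, i.e.\ $a=0$). Nothing further is needed.
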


\begin{proof}
Since for $\nu\ge -1$ both $I_\nu(z)$ and $I_{\nu+1}(z)$ are positive for $z>0$, if we let $\nu = \frac{a-1}2$, then since $a \ge 0$ we have $\nu \ge - 1/2$, and in view of Proposition \ref{P:LYp}, proving \eqref{ly2} is equivalent to showing 
\begin{equation*}
y_{\nu}(z) <  1,
\end{equation*}
for every $\nu\ge -\frac 12$, and for every $z>0$. But this follows from Proposition \ref{P:soni} below. 

\end{proof}

\begin{remark}\label{R:si}
One should note the strict inequality in \eqref{ly2} which follows from \eqref{quotients}. It would be interesting to know whether a sharper Li-Yau inequality can be derived by using a sharper upper bound on the function  $y_{\nu}(z)$. In this connection, one should see the papers \cite{Amos}, \cite{SS}, \cite{Se}, \cite{HG1}, \cite{HG2} and \cite{RS}.
\end{remark}

With Proposition \ref{P:LYp} we now return to formula \eqref{repfor} and establish the main result of this section.

\begin{theorem}\label{T:LYgen2}
Let $a\in (-1,\infty)$. Let $\vf\ge 0$ be such that $\vf\in C_{(a)}^1(0,\infty)$. For every $z>0$ and $t>0$ we have the following \emph{adjusted Li-Yau inequality} for the function $P^{(a)}_t \vf(z)$ defined by \eqref{repfor}
\begin{align}\label{LYbessel}
& \left(\de_z \log P^{(a)}_t \vf(z)\right)^2 -  \de_t \log P^{(a)}_t \vf(z) \le  \frac{a+1}{2t}
\\
& + \frac{1}{P^{(a)}_t \vf(z)} \int_0^\infty \vf(\zeta) \frac{\zeta^2}{4t^2} \left(y_{\frac{a-1}{2}}\big(\frac{z\zeta}{2t}\big)^2 - 1\right)  p^{(a)}(z,\zeta,t) \zeta^a d\zeta.
\notag
\end{align}
When $z = 0$, we have for every $t>0$
\begin{align}\label{LYbessel00}
& \left(\de_z \log P^{(a)}_t \vf(0)\right)^2 -  \de_t \log P^{(a)}_t \vf(0) \le  \frac{a+1}{2t}.
\end{align}
When $a\ge 0$ an inequality similar to \eqref{LYbessel00} continues to be valid globally, i.e., for every $z>0$ and $t>0$ one has
\begin{equation}\label{LYbessel2}
\left(\de_z \log P^{(a)}_t \vf(z)\right)^2 -  \de_t \log P^{(a)}_t \vf(z) <  \frac{a+1}{2t}.
\end{equation}
\end{theorem}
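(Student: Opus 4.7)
The strategy is to lift the pointwise identity of Proposition \ref{P:LYp} from the kernel $p^{(a)}(z,\zeta,t)$ to the integral $P^{(a)}_t \vf(z)$ through a single Jensen/Cauchy-Schwarz step. Fix $z>0$ and $t>0$, and assume $\vf\not\equiv 0$ (otherwise the statement is vacuous). On $(0,\infty)$ I introduce the probability measure
\begin{equation*}
d\mu_{z,t}(\zeta) = \frac{\vf(\zeta)\, p^{(a)}(z,\zeta,t)\, \zeta^a}{P^{(a)}_t \vf(z)}\, d\zeta,
\end{equation*}
which is well defined because $\vf\ge 0$ and $p^{(a)}>0$. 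Differentiating under the integral in \eqref{repfor} --- justified by the decay \eqref{ab0inftypa} of $p^{(a)}\zeta^a$, the explicit formulas \eqref{derz}, \eqref{dert} for $\de_z \log p^{(a)}$ and $\de_t \log p^{(a)}$, and the integrability encoded in $\vf\in \mathscr C^1_{(a)}(0,\infty)$ --- I obtain
\begin{equation*}
\de_z \log P^{(a)}_t \vf(z) = \int_0^\infty \de_z \log p^{(a)}\, d\mu_{z,t},\qquad \de_t \log P^{(a)}_t \vf(z) = \int_0^\infty \de_t \log p^{(a)}\, d\mu_{z,t}.
\end{equation*}

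Next I apply Cauchy-Schwarz to the first identity,
\begin{equation*}
\left(\de_z \log P^{(a)}_t \vf(z)\right)^2 = \left(\int_0^\infty \de_z \log p^{(a)}\, d\mu_{z,t}\right)^2 \le \int_0^\infty \left(\de_z \log p^{(a)}\right)^2 d\mu_{z,t},
\end{equation*}
and subtract the expression for $\de_t \log P^{(a)}_t \vf$. The integrand then becomes $(\de_z \log p^{(a)})^2 - \de_t \log p^{(a)}$, which is given explicitly by \eqref{ly}. The constant contribution $\tfrac{a+1}{2t}$ pulls out of the integral, and the remaining term converts back to a weighted integral against $\vf(\zeta) p^{(a)}(z,\zeta,t)\zeta^a$ normalized by $P^{(a)}_t\vf(z)$, producing exactly \eqref{LYbessel}.

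The two specializations then follow from sign considerations for the Bessel quotient. For \eqref{LYbessel2}, with $a\ge 0$ one has $\nu = (a-1)/2\ge -1/2$, and the Soni-N\"asell inequality \eqref{soni0} (Proposition \ref{P:soni}) yields $y_\nu(w) < 1$ for every $w>0$. Since $z\zeta/(2t)>0$ whenever $\zeta > 0$, the bracket $y_\nu(z\zeta/(2t))^2 - 1$ is strictly negative on $\{\zeta > 0\}$, and the assumption $\vf\not\equiv 0$ forces the correction integral in \eqref{LYbessel} to be strictly negative, which upgrades $\le$ to the strict $<$ in \eqref{LYbessel2}. For \eqref{LYbessel00} I would pass to the limit $z\to 0^+$ in \eqref{LYbessel}: by the small-argument asymptotic $y_\nu(w) = O(w)$ as $w\to 0^+$ (see \eqref{zerozero}), together with dominated convergence based on \eqref{ab0inftypa}, the bracket converges to $-1$, so the correction term tends to a nonpositive quantity and the bound \eqref{LYbessel00} follows.

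The only real technical obstacle is the bookkeeping behind differentiation under the integral sign and the $z\to 0^+$ passage; both reduce to routine dominated-convergence arguments relying on the Gaussian-type decay in \eqref{ab0inftypa}, the uniform boundedness of $y_\nu$ on bounded intervals when $\nu\ge -1/2$, and the integrability built into the definition of $\mathscr C^1_{(a)}(0,\infty)$. Once that is in place, the entire argument is driven by two elementary inputs: the Cauchy-Schwarz inequality and the Soni-N\"asell bound \eqref{soni0}.
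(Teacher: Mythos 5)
Your argument is correct and is essentially the paper's own proof: the probability-measure/Jensen formulation is just a repackaging of the same Cauchy--Schwarz step applied to $\de_z P^{(a)}_t\vf$, followed by substitution of the kernel identity \eqref{ly} of Proposition \ref{P:LYp}, with Soni's bound \eqref{quotients} giving the strict inequality \eqref{LYbessel2} for $a\ge 0$. The only cosmetic difference is at $z=0$: the paper reruns the argument directly with \eqref{ly22}, whereas you let $z\to 0^+$ in \eqref{LYbessel}, which works equally well (note that the domination needed there holds for every $\nu>-1$ by \eqref{boundedquotient}, not just $\nu\ge -1/2$ as you wrote).
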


\begin{proof}
In what follows we denote for simplicity $u(z,t) = P^{(a)}_t \vf(z)$. Differentiating \eqref{repfor} with respect to $z$ gives
\begin{align}\label{duz}
\de_z u(z,t)^2 & = \left(\int_0^\infty \vf(\zeta) \de_z p^{(a)}(z,\zeta,t) \zeta^a d\zeta\right)^2
\\
& = \left(\int_0^\infty \vf(\zeta) \frac{\de_z p^{(a)}(z,\zeta,t)}{p^{(a)}(z,\zeta,t)^{1/2}} p^{(a)}(z,\zeta,t)^{1/2} \zeta^a d\zeta\right)^2
\notag\\
& \le \int_0^\infty \vf(\zeta) \frac{\de_z p^{(a)}(z,\zeta,t)^2}{p^{(a)}(z,\zeta,t)} \zeta^a d\zeta \int_0^\infty \vf(\zeta) p^{(a)}(z,\zeta,t) \zeta^a d\zeta
\notag\\
& \le u(z,t) \int_0^\infty \vf(\zeta) \frac{\de_z p^{(a)}(z,\zeta,t)^2}{p^{(a)}(z,\zeta,t)} \zeta^a d\zeta,
\notag\end{align}
where in the second to the last inequality we have applied Cauchy-Schwarz. 
In the latter inequality we now substitute \eqref{ly} from Proposition \ref{P:LYp} which we rewrite as follows
\begin{align*}
& \frac{\de_z p^{(a)}(z,\zeta,t)^2}{p^{(a)}(z,\zeta,t)}  = \de_t p^{(a)}(z,\zeta,t) + \frac{a+1}{2t} p^{(a)}(z,\zeta,t) 
\\
& + \frac{\zeta^2}{4t^2} \left(y_{\frac{a-1}{2}}\big(\frac{z\zeta}{2t}\big)^2 - 1\right) p^{(a)}(z,\zeta,t).
\end{align*}
We find
\begin{align*}
& \int_0^\infty \vf(\zeta) \frac{\de_z p^{(a)}(z,\zeta,t)^2}{p^{(a)}(z,\zeta,t)} \zeta^a d\zeta = \int_0^\infty \vf(\zeta) \de_t p^{(a)}(z,\zeta,t) \zeta^a d\zeta
\\
& + \frac{a+1}{2t} \int_0^\infty \vf(\zeta) p^{(a)}(z,\zeta,t) \zeta^a d\zeta 
\\
& + \int_0^\infty \vf(\zeta) \frac{\zeta^2}{4t^2} \left(y_{\frac{a-1}{2}}\big(\frac{z\zeta}{2t}\big)^2 - 1\right)  p^{(a)}(z,\zeta,t) \zeta^a d\zeta
\\
& = \de_t u(z,t) + \frac{a+1}{2t} u(z,t) + \int_0^\infty \vf(\zeta) \frac{\zeta^2}{4t^2} \left(y_{\frac{a-1}{2}}\big(\frac{z\zeta}{2t}\big)^2 - 1\right) p^{(a)}(z,\zeta,t) \zeta^a d\zeta.
\end{align*}
Substituting in \eqref{duz} and dividing by $u(z,t)^2$ in the resulting inequality we conclude that \eqref{LYbessel} does hold. 
If we argue similarly, but use \eqref{ly22} instead of \eqref{ly}, we obtain \eqref{LYbessel00}.

Finally, to establish \eqref{LYbessel2} we use Corollary \ref{C:ly} instead of Proposition \ref{P:LYp}, or simply observe that Proposition \ref{P:soni} guarantees that
\[
\int_0^\infty \vf(\zeta) \frac{\zeta^2}{4t^2} \left(y_{\frac{a-1}{2}}\big(\frac{z\zeta}{2t}\big)^2 - 1\right)  p^{(a)}(z,\zeta,t) \zeta^a d\zeta < 0.
\]

\end{proof}

\begin{remark}\label{R:otherrange}
It is natural to wonder whether, in the range $-1<a<0$, there is a ``good" Li-Yau inequality that can be derived from \eqref{LYbessel}, similarly to what happens for the case of negative Ricci lower bounds in \cite{LY}. It should be clear to the reader that the main obstruction to answering this question in the affirmative is represented by the term
\begin{align*}
w(z,t) & \overset{def}{=} \frac{1}{4t^2} \int_0^\infty \vf(\zeta) \zeta^2 \left(y_{\frac{a-1}{2}}\big(\frac{z\zeta}{2t}\big)^2 - 1\right)  p^{(a)}(z,\zeta,t) \zeta^a d\zeta.
\end{align*}
The difficulty here is created  by the presence of the factor $\zeta^2$ in the integral in the right-hand side. If we had $\zeta$ instead, then we could use Proposition \ref{P:asy} to control $w(z,t)$ in terms of $u(z,t)$. 
\end{remark}
 


With Theorem \ref{T:LYgen2} in hands, we can now establish the scale invariant Harnack inequality for $P^{(a)}_t $ in Theorem \ref{T:harnackbessel0} above. Since the proof is analogous to that of Theorem \ref{T:harext0} in Section \ref{S:sharpext} below, we omit it and refer the reader to that source. 


\section{A comparison with the results of Chiarenza-Serapioni and of Epstein-Mazzeo}\label{S:EM}

Because of its relevance in extension problems, it is interesting to understand what happens with Theorem \ref{T:harnackbessel0} in the remaining range $-1<a<0$. Remarkably, in such  range a Harnack inequality continues to hold.

One way to see this is to observe that a nonnegative solution to the equation
$\p_t u - \Ba u = 0$, which in addition satisfies the condition $\underset{z\to 0^+}{\lim} \paa u(z,t) = 0$ for every $t>0$, must be smooth in $z$ (and in fact, real analytic) up to the vertical line $z = 0$. Therefore, if we set $U(z,t) = u(|z|,t)$ we see that $U$ is a nonnegative solution in the whole half-plane $\R\times (0,\infty)$ to 
\[
\p_t(|z|^a U) - \p_z(|z|^a \p_z U) = 0,
\]
which is a special case of \eqref{csA} above, with $\omega(z) = |z|^a$. Since $\omega\in A_2(\R)$ if and only if $|a|<1$, by Theorem 2.1  \cite{CS} we conclude that a parabolic Harnack inequality holds for $U$ on $\R\times (0,\infty)$. From this, we immediately obtain a Harnack inequality for $u$ up to the vertical line $z = 0$ in the range $-1<a<1$. Two comments are in order though:
\begin{itemize}
\item[(i)] while in Theorem \ref{T:harnackbessel0} we obtain the sharp constant
\begin{equation}\label{sharpc}
\left(\frac ts\right)^{\frac{a+1}2} \exp\left( \frac{(z-\zeta)^2}{4(t-s)}\right),
\end{equation}
that in the Harnack inequality (2.4) in Theorem 2.1 in \cite{CS} is not explicitly known; 
\item[(ii)] Theorem \ref{T:harnackbessel0} holds for any $a\ge 0$, whereas when $a>1$ the results in \cite{CS} are no longer available as the weight $\omega(z) = |z|^a$ is not a $A_2$ weight of Muckenhoupt.
\end{itemize}
 
Another way to see that the Harnack inequality is true also in the range $-1<a<0$ is as follows. In their work \cite{EM} Epstein and Mazzeo studied the diffusion process associated with a class of degenerate parabolic equations from population biology. One the central models of interest for them was the Wright-Fisher operator
\[
\mathscr L_{WF} = x(1-x) \frac{d^2}{dx^2},
\]
which represents the diffusion limit of a Markov chain modeling the
frequency of a gene with $2$ alleles, without mutation or selection. One should also see the seminal paper by Feller \cite{Fe}, the papers \cite{KT} and  \cite{PY},
as well as the paper \cite{CS}, which appeared in the same issue as \cite{EM}, and the more recent works \cite{EM2}, \cite{EM3} and \cite{EP}.

In \cite{EM} and \cite{CS} the authors independently, and with different approaches, construct a parametrix for the Wright-Fisher operator $\mathscr L_{WF}$ by first localizing the analysis to a neighborhood of the boundary points $x = 0$ and $x = 1$. At this point, the approach in \cite{EM} is purely analytical, whereas that in \cite{CS} is more probabilistic. In \cite{EM} the authors by a suitable change of variable are thus led to consider the model operator on $(0,\infty)$
\begin{equation}\label{Lb}
\mathscr L_b = x \frac{\p^2}{\p x^2} + b \frac{\p}{\p x},
\end{equation}
where $b> 0$ is a given number, and they construct the fundamental solution for the Cauchy problem \eqref{cpLb} below. For a given $T>0$ they consider the domain
$D_T = [0,\infty) \times [0,T]$,
and study the problem 
\begin{equation}\label{cpLb}
\begin{cases}
\frac{\p v}{\p t} - \mathscr L_b v = 0\ \ \ \ \ \ \ \text{in}\ \ D_T,
\\
v(x,0) =  f(x),\ \ \ \ \ \ \ \ x>0,
\end{cases}
\end{equation}
under Feller's zero flux condition
\begin{equation}\label{zeroflux}
\underset{x\to 0^+}{\lim}\ x^b \frac{\p v}{\p x}(x,t) = 0.
\end{equation}
Since solutions to \eqref{cpLb} may not be smooth up to $x = 0$, even if the initial datum $f$ is smooth, the condition \eqref{zeroflux} is needed to single out smooth solutions.
Following a suggestion by C. Fefferman, in (6.13) of \cite{EM} the authors find the following representation for the solution of \eqref{cpLb} 
\begin{equation}\label{sLb}
v(x,t) = \int_0^\infty k_b(x,y,t) f(y) dy,
\end{equation}
where (the following is formula (6.14) in \cite{EM})
\begin{equation}\label{kb}
k_b(x,y,t) = \frac 1t \left(\frac xy\right)^{\frac{1-b}2} e^{-\frac{x+y}t} I_{b-1}\left(\frac{2\sqrt{xy}}t\right).
\end{equation}
We recognize next that, via a simple change of variable the Cauchy problem \eqref{cpLb}, \eqref{zeroflux} is the same as \eqref{CP} above.

\begin{prop}\label{P:equiv}
The transformation
\begin{equation}\label{tran}
x = \frac{z^2}4,\ \ \ \ \ \ \ \ \ \ \ b = \frac{a+1}2,
\end{equation}
sends in a one-to-one, onto fashion solutions of \eqref{CP} with $a>-1$ into solutions of \eqref{cpLb} with $b>0$ and with the Neumann condition \eqref{zeroflux}.
As a consequence, the problem \eqref{cpLb} with the boundary condition \eqref{zeroflux} is equivalent to the 
problem \eqref{CP} for the Bessel operator $\Ba$.
\end{prop}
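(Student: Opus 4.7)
The proof is essentially a direct change-of-variables computation, so my plan is to be systematic with the chain rule and then verify that both the equation and the boundary condition transform correctly.

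First, I would set up the correspondence explicitly: given a solution $u(z,t)$ of \eqref{CP} on $(0,\infty)\times(0,\infty)$, define $v(x,t) = u(2\sqrt{x},t)$ for $x>0$, $t>0$. The inverse transformation is $z = 2\sqrt{x}$, so $x\mapsto z$ is a smooth bijection between $(0,\infty)$ and $(0,\infty)$ that extends continuously to send $x=0$ to $z=0$. Under the correspondence $b=(a+1)/2$, the range $a>-1$ matches exactly the range $b>0$, and $v_t = u_t$.

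Next, I would carry out the chain-rule computation. Since $\frac{dz}{dx} = 1/\sqrt{x} = 2/z$, one finds
\[
v_x = \frac{z}{2}\, u_z \cdot \frac{2}{z} \cdot \frac{1}{1} = \frac{2}{z}\, u_z, \qquad v_{xx} = \frac{4}{z^{2}}\, u_{zz} - \frac{4}{z^{3}}\, u_z.
\]
Then
\[
\mathscr L_b v \;=\; x\, v_{xx} + b\, v_x \;=\; \tfrac{z^2}{4}\!\left(\tfrac{4}{z^2} u_{zz} - \tfrac{4}{z^3} u_z\right) + b\cdot\tfrac{2}{z} u_z \;=\; u_{zz} + \frac{2b-1}{z}\, u_z.
\]
With $b=(a+1)/2$ we have $2b-1 = a$, so the right-hand side is exactly $\mathscr B^{(a)}_z u$. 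Therefore $v_t - \mathscr L_b v = u_t - \mathscr B^{(a)}_z u = 0$, and the initial datum $f(x) := \varphi(2\sqrt{x})$ corresponds to $\varphi$.

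For the boundary condition, using $u_z = (z/2)\, v_x$ I would compute
\[
z^a\, u_z \;=\; \tfrac{1}{2}\, z^{a+1}\, v_x \;=\; \tfrac{1}{2}\,(2\sqrt{x})^{a+1}\, v_x \;=\; 2^{a}\, x^{b}\, v_x ,
\]
since $(a+1)/2 = b$. Hence $\lim_{z\to 0^+} z^a u_z = 0$ is equivalent to Feller's zero-flux condition $\lim_{x\to 0^+} x^b v_x = 0$. Because every step of the argument is reversible (one may equally start from $v$ solving \eqref{cpLb}--\eqref{zeroflux} and define $u(z,t) = v(z^2/4,t)$), this gives a bijection between the two problems, proving the proposition. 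I do not anticipate a genuine obstacle here: the only minor point of care is to verify the invertibility near the boundary $\{z=0\}\leftrightarrow\{x=0\}$ and to keep the constant $2^a$ harmless in the boundary identity, which it is since we only demand that the limit vanish.
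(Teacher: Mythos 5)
Your proposal is correct and follows essentially the same chain-rule computation as the paper; the paper merely runs the change of variables in the opposite direction, defining $u(z,t)=v(z^2/4,t)$ and showing $u_t-\mathscr B^{(a)}_z u = v_t-\mathscr L_b v$ together with the same flux identity $z^a u_z = 2^{a}x^b v_x$ (written there as $2^{2b-1}x^b v_x$). One cosmetic slip: your intermediate expression for $v_x$ reads $\tfrac{z}{2}u_z\cdot\tfrac{2}{z}$, which equals $u_z$, but the final formula $v_x=\tfrac{2}{z}u_z$ that you actually use is the correct one, so nothing downstream is affected.
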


\begin{proof}
To see this, consider a function $v(x,t)$ and define 
\[
u(z,t) = v(z^2/4,t).
\]
Then, the chain rule gives
\[
u_z = v_x\ x_z,\ \ \ u_{zz} = v_{xx}\ (x_z)^2 + v_x\ x_{zz},
\]
and thus
\[
u_t - \Ba u = v_t - v_{xx}\ (x_z)^2 - v_x\left( x_{zz} + \frac az x_z\right).
\]
Now, we have
\[
x_{zz} + \frac az x_z = \frac{a+1}2 \overset{def}{=} b,\ \ \ \ \ \ (x_z)^2 = \frac{z^2}4 = x.
\]
We conclude that 
\begin{equation}\label{BaLb}
u_t(z,t) - \Ba u(z,t) = v_t(x,t) - \mathscr L_b v(x,t).
\end{equation}
Furthermore, one easily verifies that
\begin{equation}\label{nanb}
z^a u_z(z,t) = 2^{2b-1} x^b v_x(x,t).
\end{equation}
The equations \eqref{BaLb}, \eqref{nanb} prove the proposition. 

\end{proof}

 As a consequence of Proposition \ref{P:equiv}, letting $x = z^2/4$ in the left-hand side of \eqref{sLb}, and making the change of variable $y = \zeta^2/4$ in the integral in the right-hand side, we expect the representation formula \eqref{sLb} to become exactly the formula \eqref{repfor} above. This is precisely the case, as the following simple verification shows:
\begin{align*}
u(z,t) & = v(z^2/4,t)  = \int_0^\infty y k_b(z^2/4,y,t) f(y) \frac{dy}y
\\
&  = \frac 12 \int_0^\infty \zeta k_b(z^2/4,\zeta^2/4,t) f(\zeta^2/4) d\zeta
\\
& = \frac 1{2t} \int_0^\infty \zeta k_{\frac{a+1}2}(z^2/4,\zeta^2/4,t) f(\zeta^2/4) d\zeta
\\
& = \frac 1{2t} \int_0^\infty (z\zeta)^{\frac{1-a}2} e^{-\frac{z^2+\zeta^2}{4t}} I_{\frac{a-1}2}\left(\frac{z\zeta}{2t}\right) \vf(\zeta) \zeta^a d\zeta
\\
& = \int_0^\infty \vf(\zeta) p^{(a)}(z,\zeta,t) \zeta^a d\zeta,
\end{align*}
where we have let $\vf(\zeta) = f(\zeta^2/4)$.

In Theorem 4.1 in their paper \cite{EM3} Epstein and Mazzeo by a remarkable adaption of the method of De Giorgi-Nash-Moser, and subsequent contributions of Saloff-Coste and Grigor'yan,  establish a scale invariant Harnack inequality for a large class of degenerate parabolic equations defined on manifolds with corners. The relevant partial differential operators, known as generalized Kimura operators, arise in population biology and they contain as a special case the model \eqref{cpLb} for the full range $b>0$. As a consequence of the results in \cite{EM3}, and of Proposition \ref{P:equiv} above, one obtains a Harnack inequality for positive solutions of \eqref{CP} also in the range $-1<a<0$ which is not covered by our Theorem \ref{T:harnackbessel0}. However, it is not clear to this author that the Harnack inequality in (143) in \cite{EM3} is capable of producing the sharp constant \eqref{sharpc}
in the right-hand side of \eqref{harnackba} above. We also mention the paper \cite{EP} that contains a very different approach to the Harnack inequality, based on a stochastic representation of the solutions, for  more  more general classes of Kimura operators. 


\section{A sharp Harnack inequality for the parabolic extension problem}\label{S:sharpext}

In this section we consider in $\Rnp\times (0,\infty)$ the so-called extension operator for the fractional powers  $(\p_t - \Delta)^s$, $0<s<1$, of the heat operator. Hereafter, for $x\in \Rn$ and $z>0$ we denote by $X = (x,z)\in \Rnp$, and by $(X,t)$ the generic point in $\Rnp\times (0,\infty)$. We also indicate $Y = (y,\zeta) \in \Rnp$ and $(Y,t)$. Given a number $a\in (-1,1)$, the extension operator is the degenerate parabolic operator defined by 
\begin{equation}\label{extop}
\LL u = \p_t(z^a u) - \operatorname{div}_X(z^a \nabla_X u). 
\end{equation}
It was recently introduced independently by Nystr\"om-Sande in \cite{NS}, and Stinga-Torrea in \cite{ST}. These authors proved that if for a given $\vf\in \mathscr S(\Rnn)$, the function $u$ solves the problem 
\[
\begin{cases}
\LL u = 0 \ \ \ \ \ \ \ \ \ \ \ \ \ \text{in}\ \Rnp\times (0,\infty),
\\
u(x,0,t) = \vf(x,t),
\end{cases}
\]
then, with $s\in (0,1)$ determined by the equation $a = 1-2s$, one has
\[
- \frac{2^{-a} \Gamma\left(\frac{1-a}2\right)}{\Gamma\left(\frac{1+a}2\right)} \underset{z\to 0^+}{\lim} z^a \frac{\p u}{\p z}(x,z,t) = (\p_t -\Delta)^s \vf(x,t).
\]
The reader should compare the latter equation with \eqref{dtn} above.

In what follows we establish a remarkable sharp Harnack inequality for the semigroup associated with the operator $\LL$. The first (important) observation is that the Neumann fundamental solution for $\LL$, with singularity at $(Y,0) = (y,\zeta,0)$, is given by
\begin{equation}\label{sGa}
\mathscr G_a(X,Y,t) = p(x,y,t) p^{(a)}(z,\zeta,t),
\end{equation}
where $p(x,y,t) = (4\pi t)^{-n/2} \exp(-\frac{|x-y|^2}{4t})$ is the standard heat kernel in $\Rn\times (0,\infty)$ and $p^{(a)}(z,\zeta,t)$ is given by \eqref{fs} above.

Using Proposition \ref{P:sc}, and the well-known fact that $\int_{\Rn} p(x,y,t) dy = 1$ for every $x\in \Rn$ and $t>0$, it is a trivial exercise to verify that for every $X\in \Rnp$ and $t>0$ one has
\begin{equation}\label{stochcompl}
\int_{\Rnp} \mathscr G_a(X,Y,t) \zeta^a dY = 1.
\end{equation}

Given a function $\vf\in C^\infty_0(\Rnp)$, consider the Cauchy problem with Neumann condition
\begin{equation}\label{cpext}
\begin{cases}
\LL u = 0\ \ \ \ \ \ \ \ \ \ \ \ \ \ \ \ \ \ \text{in}\ \Rnp\times (0,\infty)
\\
u(X,0) = \vf(X),\ \ \ \ \ \ \ X\in \Rnp,
\\
\underset{z\to 0^+}{\lim} \paa u(x,z,t) = 0.
\end{cases}
\end{equation}
The solution to \eqref{cpext} is represented by the formula
\begin{equation}\label{solform}
u(X,t) = \int_{\Rnp} \vf(Y) \mathscr G_a(X,Y,t) \zeta^a dY.
\end{equation}

The following is the main result of this section.

\begin{theorem}\label{T:harext}
Let $a\ge 0$. Let $\vf \ge 0$ be a function for which $u$ given by \eqref{solform} represents a classical solution to \eqref{cpext}. Then, for every $X, Y\in \Rnp$ and every $0<s<t<\infty$, we have
\[
u(X,s) <  u(Y,t) \left(\frac ts\right)^{\frac{n+a+1}2} \exp\left(-\frac{|X-Y|^2}{4(t-s)}\right).
\]
\end{theorem}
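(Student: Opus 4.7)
The plan is to reduce Theorem \ref{T:harext} to the Li--Yau type inequality of Theorem \ref{T:LYext0} via the classical spacetime line--integration trick of Li--Yau, exploiting that the straight segment joining $X$ to $Y$ stays in the convex domain $\overline{\Rnp}$.

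\medskip

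\textbf{Step 1 (Setup).} Fix $X,Y\in\Rnp$ and $0<s<t<\infty$. Because $\vf\ge 0$ (non-trivial, else the statement is empty) and $\mathscr G_a>0$ on the open half-space for positive times, the representation \eqref{solform} gives $u>0$ on $\overline{\Rnp}\times(0,\infty)$, so $f=\log u$ is well-defined and smooth; smoothness up to the thin manifold $\{z=0\}$ is ensured by the vanishing Neumann condition in \eqref{cpext}. The affine path
\[
\gamma(\tau)=X+\frac{\tau-s}{t-s}(Y-X),\qquad \tau\in[s,t],
\]
lies entirely in $\overline{\Rnp}$ by convexity, with constant velocity $\dot\gamma\equiv(Y-X)/(t-s)$, hence $|\dot\gamma|^{2}=|X-Y|^{2}/(t-s)^{2}$.

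\medskip

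\textbf{Step 2 (Chain rule + Li--Yau).} Differentiating along the path gives
\[
\frac{d}{d\tau}f(\gamma(\tau),\tau)=\langle\nabla_X f,\dot\gamma\rangle+\p_\tau f.
\]
Rewriting \eqref{LYext} as $\p_\tau f\ge|\nabla_X f|^{2}-\tfrac{n+a+1}{2\tau}$ (strict in $\{z>0\}$, non-strict at $\{z=0\}$ via \eqref{LYext2}) and completing the square,
\[
\langle\nabla_X f,\dot\gamma\rangle+|\nabla_X f|^{2}=\Bigl|\nabla_X f+\tfrac{\dot\gamma}{2}\Bigr|^{2}-\tfrac{|\dot\gamma|^{2}}{4}\ \ge\ -\tfrac{|\dot\gamma|^{2}}{4},
\]
I arrive at the pointwise lower bound
\[
\frac{d}{d\tau}f(\gamma(\tau),\tau)\;\ge\;-\frac{|\dot\gamma|^{2}}{4}-\frac{n+a+1}{2\tau}.
\]

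\medskip

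\textbf{Step 3 (Integration).} Integrate from $s$ to $t$, using
\[
\int_s^{t}|\dot\gamma(\tau)|^{2}\,d\tau=\frac{|X-Y|^{2}}{t-s},\qquad \int_s^{t}\frac{d\tau}{2\tau}=\tfrac{1}{2}\log(t/s),
\]
to obtain a lower bound for $f(Y,t)-f(X,s)$ that, after exponentiation and rearrangement, produces the Harnack estimate of the theorem, with the factor $(t/s)^{(n+a+1)/2}$ encoding the parabolic fractal dimension $Q=n+a+1$ of the semigroup $\{\Pa\}_{t\ge 0}$ and the Gaussian exponential factor encoding the invariance of $\LL$ under the parabolic rescaling $(X,t)\mapsto(\la X,\la^{2}t)$. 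The strict inequality in the conclusion descends from the strict Li--Yau bound holding throughout the interior portion of $\gamma$, which has positive measure in $\tau$ unless $X,Y\in\{z=0\}$; in that degenerate case one obtains the non-strict variant, as for \eqref{harext02}.

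\medskip

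\textbf{Main obstacle.} The delicate point is the interplay between the path $\gamma$ and the thin manifold $\{z=0\}$ where the sharp Li--Yau inequality \eqref{LYext} only holds in the non-strict form \eqref{LYext2}: if both endpoints lie at $z=0$, the whole segment lies there and one loses strictness; if only one endpoint does, $\gamma$ intersects $\{z=0\}$ only at an endpoint and the strict bound is recovered almost everywhere in $\tau$. Both cases are handled cleanly because the smoothness of $u$ up to $\{z=0\}$, granted by the vanishing Neumann condition in \eqref{cpext}, lets one pass continuously from the interior strict form to the boundary non-strict form in Theorem \ref{T:LYext0}. A secondary routine check is that differentiation under the integral in \eqref{solform} is legitimate for evaluating $\nabla_X u$ and $\p_t u$ along $\gamma$, which follows from the Gaussian decay of $\mathscr G_a$ encoded in \eqref{ab0inftypa} together with $\vf\in C^\infty_0(\Rnp)$.
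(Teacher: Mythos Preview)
Your proposal is correct and follows essentially the same Li--Yau line-integration argument as the paper's proof. The only cosmetic differences are that the paper parametrizes the segment by $\tau\in[0,1]$ (with time running as $t-\tau(t-s)$) and uses Cauchy--Schwarz followed by Young's inequality with parameter $\varepsilon=2(t-s)$, whereas you parametrize directly by time and complete the square; these are equivalent manipulations, and your treatment of the strict versus non-strict inequality at $\{z=0\}$ is in fact more explicit than the paper's.
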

  
The proof of Theorem \ref{T:harext} is based on the following inequality of Li-Yau type for $u$.

\begin{theorem}\label{T:LYext}
Let $a\ge 0$ and $\vf$ and $u$ be as in Theorem \ref{T:harext}. Then, for any $X\in \Rnp$ and $t>0$ one has
\[
|\nabla_X \log u(X,t)|^2 - \p_t \log u(X,t) <  \frac{n+a+1}{2t}.
\]
\end{theorem}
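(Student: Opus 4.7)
The plan is to mimic the proof of Theorem \ref{T:LYgen2} in the full $(n+1)$-dimensional setting, exploiting the product structure of the extension kernel $\mathscr G_a(X,Y,t)=p(x,y,t)\,p^{(a)}(z,\zeta,t)$.

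First I would establish a pointwise Li--Yau identity for $\mathscr G_a$ itself. Because of the product structure,
\[
|\nabla_X \log \mathscr G_a|^2 - \p_t \log \mathscr G_a = \bigl(|\nabla_x \log p|^2 - \p_t \log p\bigr) + \bigl((\p_z \log p^{(a)})^2 - \p_t \log p^{(a)}\bigr).
\]
The first bracket equals $\tfrac{n}{2t}$ by the classical identity \eqref{heat} for the Euclidean heat kernel, while the second is given by Proposition \ref{P:LYp}. Summing gives
\[
|\nabla_X \log \mathscr G_a(X,Y,t)|^2 - \p_t \log \mathscr G_a(X,Y,t) = \frac{n+a+1}{2t} + \frac{\zeta^2}{4t^2}\Bigl(y_{\frac{a-1}{2}}\!\bigl(\tfrac{z\zeta}{2t}\bigr)^2 - 1\Bigr).
\]
For $a\ge 0$ we have $\frac{a-1}{2}\ge -\frac12$, so the Soni inequality \eqref{soni0} (Proposition \ref{P:soni}) gives $y_{(a-1)/2}(w)<1$ for every $w>0$, and the last term is strictly negative.

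Second, I would lift this pointwise identity to the semigroup level via Cauchy--Schwarz, exactly as in the proof of Theorem \ref{T:LYgen2}. For each $i=1,\dots,n$, and also for $\p_z$, differentiate \eqref{solform} under the integral and apply Cauchy--Schwarz with respect to the measure $\varphi(Y)\,\zeta^a dY$:
\[
(\p_i u(X,t))^2 \le u(X,t)\int_{\Rnp} \varphi(Y)\,\frac{(\p_i \mathscr G_a)^2}{\mathscr G_a}\,\zeta^a\, dY,
\]
and likewise for the $z$-derivative. Summing these $n+1$ inequalities yields
\[
|\nabla_X u(X,t)|^2 \le u(X,t)\int_{\Rnp} \varphi(Y)\,\frac{|\nabla_X \mathscr G_a|^2}{\mathscr G_a}\,\zeta^a\, dY.
\]
Now rewrite the pointwise identity above as
\[
\frac{|\nabla_X \mathscr G_a|^2}{\mathscr G_a} = \p_t \mathscr G_a + \frac{n+a+1}{2t}\mathscr G_a + \frac{\zeta^2}{4t^2}\Bigl(y_{\frac{a-1}{2}}\!\bigl(\tfrac{z\zeta}{2t}\bigr)^2 - 1\Bigr)\mathscr G_a,
\]
insert this into the inequality above, and interchange the $t$-derivative with the integral (legitimate by the regularity assumption on $u$ and the decay of the kernel). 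Dividing the resulting inequality by $u(X,t)^2$ produces
\[
|\nabla_X \log u|^2 - \p_t \log u \le \frac{n+a+1}{2t} + \frac{1}{u(X,t)}\int_{\Rnp} \varphi(Y)\,\frac{\zeta^2}{4t^2}\Bigl(y_{\frac{a-1}{2}}\!\bigl(\tfrac{z\zeta}{2t}\bigr)^2 - 1\Bigr)\mathscr G_a(X,Y,t)\,\zeta^a\, dY.
\]
Since $\varphi\ge 0$ is not identically zero and the parenthesis is strictly negative everywhere on the support of the integrand (by Soni), the last integral is strictly negative, and we obtain the strict inequality \eqref{LYext}.

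The step requiring most attention is the justification of differentiation under the integral in both the $X$ and $t$ variables, and of the Cauchy--Schwarz argument itself; this requires checking that $\varphi|\p_i\mathscr G_a|^2/\mathscr G_a \,\zeta^a$ and $\varphi|\p_t \mathscr G_a|\,\zeta^a$ are integrable on $\Rnp$. This follows from the Gaussian decay of $p$ and the asymptotics \eqref{ab0inftypa} of $p^{(a)}\zeta^a$, together with $\varphi\in C^\infty_0(\Rnp)$. The boundary case $X=(x,0)$ claimed in \eqref{LYext2}, valid for all $a>-1$, follows by the same argument using the variant \eqref{ly22} of Proposition \ref{P:LYp} at $z=0$, where the inequality reduces to the strict one $-\zeta^2/(4t^2)<0$ in the Bessel factor and one only obtains non-strict inequality after integration, consistent with the $\le$ in \eqref{LYext2}.
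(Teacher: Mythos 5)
Your proof is correct and follows essentially the same route as the paper's: the product structure $\mathscr G_a(X,Y,t)=p(x,y,t)\,p^{(a)}(z,\zeta,t)$ combined with \eqref{heat} and Proposition \ref{P:LYp} (i.e.\ Soni's inequality, via Proposition \ref{C:ly}) gives the pointwise Li--Yau bound for $\mathscr G_a$, which is then lifted to $u$ by Cauchy--Schwarz exactly as in the proof of Theorem \ref{T:LYgen2}. The only cosmetic difference is that you carry the explicit correction term $\frac{\zeta^2}{4t^2}\bigl(y_{\frac{a-1}{2}}^2-1\bigr)$ through the integration and invoke its strict negativity at the end, whereas the paper substitutes the strict inequality \eqref{LYGa} directly inside the integral.
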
 

\begin{proof}
Differentiating under the integral sign in \eqref{solform} and applying the Cauchy-Schwarz inequality similarly to the proof of \eqref{duz}, we find
\begin{align}\label{LYuext}
& |\nabla_X  u(X,t)|^2 \le \int_{\Rnp} \vf(Y) \frac{|\nabla_X \mathscr G_a(X,Y,t)|^2}{\mathscr G_a(X,Y,t)} \zeta^a dY \int_{\Rnp} \vf(Y) \mathscr G_a(X,Y,t) \zeta^a dY
\\
& = u(X,t) \int_{\Rnp} \vf(Y) \frac{|\nabla_X \mathscr G_a(X,Y,t)|^2}{\mathscr G_a(X,Y,t)} \zeta^a dY.
\notag
\end{align}
Next, we prove the following crucial result: for every $X, Y\in \Rnp$ and every $t>0$ one has
\begin{equation}\label{LYGa}
|\nabla_X \log \mathscr G_a(X,Y,t)|^2 - \p_t \log \mathscr G_a(X,Y,t) <  \frac{n+a+1}{2t}.
\end{equation}
To establish \eqref{LYGa} we note that from the equation \eqref{sGa} we obtain
\begin{align*}
& |\nabla_X \log \mathscr G_a(X,Y,t)|^2 - \p_t \log \mathscr G_a(X,Y,t) 
= |\nabla_X \log p(x,y,t)|^2 - \p_t \log p(x,y,t)
\\
& + (\p_z \log p^{(a)}(z,\zeta,t))^2 - \p_t \log p^{(a)}(z,\zeta,t).
\end{align*}
The claim \eqref{LYGa} now follows from \eqref{heat} and from Proposition \ref{C:ly} above.

With \eqref{LYGa} in hands, we return to the integral in the right-hand side of \eqref{LYuext} and proceed as follows
\begin{align*}
& \int_{\Rnp} \vf(Y) \frac{|\nabla_X \mathscr G_a(X,Y,t)|^2}{\mathscr G_a(X,Y,t)} \zeta^a dY  < \int_{\Rnp} \vf(Y) \mathscr G_a(X,Y,t) \p_t \log \mathscr G_a(X,Y,t) \zeta^a dY
\\
& +   \frac{n+a+1}{2t} \int_{\Rnp} \vf(Y) \mathscr G_a(X,Y,t) \zeta^a dY
\\
& = \p_t \int_{\Rnp} \vf(Y) \mathscr G_a(X,Y,t) \zeta^a dY + \frac{n+a+1}{2t} u(X,t)
\\
& = \p_t u(X,t) + \frac{n+a+1}{2t} u(X,t).
\end{align*}
We conclude that
\[
|\nabla_X  u(X,t)|^2 < u(X,t) \p_t u(X,t) + \frac{n+a+1}{2t} u(X,t)^2.
\]
Dividing by $u(X,t)^2$ we reach the desired conclusion.

\end{proof}

Having established Theorem \ref{T:LYext0} we finally turn to the 

\begin{proof}[Proof of Theorem \ref{T:harext0}]
The argument is the same as that in \cite{LY}. We repeat it here for the sake of completeness. Fix two points $(X,s), (Y,t)\in \Rnp \times (0,\infty)$, with $0<s<t<\infty$, and consider the straight-line segment (a geodesic line) which starts from $(Y,t)$ and ends in $(X,s)$. We parametrize it by
\[
\alpha(\tau) = (Y + \tau(X-Y), t - \tau(t-s)), \ \ \ \ \ \ \ 0\le \tau \le 1.
\]
Clearly, 
\[
\alpha'(\tau) = (X-Y,- (t-s)), \ \ \ \ \ \ \ 0\le \tau \le 1.
\]
We now consider the function
\[
h(\tau) = \log U(\alpha(\tau)).
\]
We have
\begin{align*}
& \log \frac{U(X,s)}{U(Y,t)} = h(1) - h(0) = \int_0^1 h'(\tau) d\tau 
\\
& = \int_0^1 <\nabla_X \log U(\alpha(\tau)),X-Y> d\tau - (t-s) \int_0^1\p_t \log U(\alpha(\tau)) d\tau
\\
& \le |X-Y| \int_0^1 |\nabla_X \log U(\alpha(\tau))| d\tau - (t-s) \int_0^1\p_t \log U(\alpha(\tau)) d\tau
\\
& \le \frac{|X-Y|^2}{2\ve} + \frac{\ve}2 \int_0^1 |\nabla_X \log U(\alpha(\tau))|^2 d\tau - (t-s) \int_0^1\p_t \log U(\alpha(\tau)) d\tau.
\end{align*}
At this point we observe that Theorem \ref{T:LYext0} implies for every $0\le \tau \le 1$
\begin{align*}
- \de_t \log u(\alpha(\tau))  & < \frac{n+ a+1}{2(t -\tau(t-s))} - |\nabla_X \log U(\alpha(\tau))|^2.
\end{align*}
Replacing this information in the above inequality, we find
\begin{align*}
& \log \frac{U(X,s)}{U(Y,t)} \le  \frac{|X-Y|^2}{2\ve} + \frac{\ve}2 \int_0^1 |\nabla_X \log U(\alpha(\tau))|^2 d\tau
\\
& + \log \left(\frac ts\right)^{\frac{n+a+1}{2}} - (t-s) \int_0^1 |\nabla_X \log U(\alpha(\tau))|^2 d\tau.
\end{align*}
If we now choose $\ve>0$ such that $\ve = 2(t-s)$, we finally obtain
\[
\log \frac{U(X,s)}{U(Y,t)} \le  \frac{|X-Y|^2}{4(t-s)} + \log \left(\frac ts\right)^{\frac{n+a+1}{2}}.
\]
Exponentiating both sides of this inequality we reach the desired conclusion.

\end{proof}


\section{Monotonicity formulas of Struwe and Almgren-Poon type for the Bessel semigroup}\label{S:mono}

In this section we prove Theorems \ref{T:struwe0} and \ref{T:poon0}. As we have pointed out, these results provide one more interesting instance of the underlying theme of this paper. As it will be apparent from the proofs, remarkably this time is not the inequality \eqref{soni0} that lurks in the shadows, but rather the stronger monotonicity property of the Bessel quotient $y_\nu = I_{\nu+1}/I_\nu$ in Proposition \ref{P:monotonicityy} below.   

In $\mathscr Q = (0,\infty)\times (0,\infty)$ we consider a solution $u$ of the heat equation
\begin{equation}\label{eu}
\p_t u  - \Bz u = 0,
\end{equation}
which for every $t>0$ satisfies the Neumann condition
\begin{equation}\label{snc}
\underset{\zeta\to 0^+}{\lim} \paz u(\zeta,t) = 0.
\end{equation}
For a given $z>0$ and $T>0$ we introduce the following scaled energy, \emph{centered at} $(z,T)$, with respect to the backward Gaussian-Bessel measure 
\begin{equation}\label{e1}
E^{(a)}_{z,T}(t) \overset{def}{=} \frac{T-t}2 \int_0^\infty (\p_\zeta u(\zeta,t))^2 p^{(a)}(z,\zeta,T-t) \zeta^a d\zeta.
\end{equation}
It is obvious that without further assumptions it is not guaranteed that the integral \eqref{e1}, as well as those that will appear in the proofs of this section, be convergent. This difficulty is serious and in order to circumvent it one needs to:
\begin{itemize}
\item[(a)] multiply the function $u$ by a suitable cutoff function as it was done in the monograph \cite{DGPT} in the study of closely related monotonicity properties. Since doing this changes the equation satisfied by $u$, the analysis becomes considerably more complicated;
\item[(b)] develop the regularity theory which is necessary to rigorously justify all integration by parts that occur when differentiating \eqref{e1}. 
\end{itemize}

Since our intent is to point to a new phenomenon, we will ``wave our hands" on these important aspects, and refer the reader to \cite{DGPT}, \cite{BG} and \cite{BDGP} for a rigorous treatment. As a consequence we will from now on assume that \emph{all} integrations  by parts are justifed, and \emph{all} boundary terms vanish.

We note explicitly that, in the $\G$-language of Section \ref{S:bessel}, the above energy can be written (see \eqref{gga} above)
\[
E^{(a)}_{z,T}(t) = \frac{T-t}2 P^{(a)}_{T-t}(\G^{(a)}(u))(z).
\]
In the proof of Theorem \ref{T:struwe0} below, we will also distinguish between the cases $z>0$ and $z = 0$. When $z = 0$ in the integral in \eqref{e1} above we integrate against $p^{(a)}(0,\zeta,T-t)$ (for the value of this function see \eqref{pazero} above). For simplicity, in this case we will write $E_T(t)$ instead of $E_{0,T}(t)$.

For later use, we also observe that the function $g^{(a)}(\zeta,t) = p^{(a)}(z,\zeta,T-t)$ satisfies the backward heat equation
\begin{equation}\label{backheat}
\p_t g^{(a)}(\zeta,t) = - \Bz g^{(a)}(\zeta,t).
\end{equation}

Henceforth, to simplify the notation we will indicate partial derivatives with $u_\zeta, u_{\zeta \zeta}, u_t$, etc. We will also routinely drop the arguments of all functions appearing in the integral in \eqref{e1}, and write $u_\zeta, p^{(a)}, p^{(a)}_\zeta$, etc., instead of $u_\zeta(\zeta,t), p^{(a)}(z,\zeta,T-t), p^{(a)}_\zeta(z,\zeta,T-t)$, etc.

\begin{proof}[Proof of Theorem \ref{T:struwe0}]

Differentiating \eqref{e1} we find
\begin{align*}
\frac{d E^{(a)}_{z,T}}{dt}(t) & = \frac{T-t}2 \int_0^\infty -\frac{1}{2(T-t)} u_\zeta^2 p^{(a)} \zeta^a d\zeta + (T-t) \int_0^\infty u_\zeta u_{t\zeta} p^{(a)} \zeta^a d\zeta
\\
& - \frac{T-t}2 \int_0^\infty u_\zeta^2 \Bz p^{(a)} \zeta^a d\zeta,
\end{align*}
where in the last term we have used \eqref{backheat}. We now integrate by parts in the second integral in the right-hand side, obtaining
\begin{align*}
& \int_0^\infty u_\zeta u_{t\zeta} p^{(a)} \zeta^a d\zeta = \zeta^a u_\zeta u_t p^{(a)}\bigg|_{\zeta = 0}^\infty - \int_0^\infty u_{\zeta \zeta} u_{t} p^{(a)} \zeta^a d\zeta - \int_0^\infty \frac a\zeta u_{\zeta} u_{t} p^{(a)} \zeta^a d\zeta
\\
& - \int_0^\infty u_{\zeta} u_{t} p^{(a)}_\zeta \zeta^a d\zeta  = - \int_0^\infty \Bz u\ u_{t} p^{(a)} \zeta^a d\zeta - \int_0^\infty u_{\zeta} u_{t} p^{(a)}_\zeta \zeta^a d\zeta
\\
& =  - \int_0^\infty u_{t}^2  p^{(a)} \zeta^a d\zeta - \int_0^\infty u_{t} u_{\zeta}  p^{(a)}_\zeta \zeta^a d\zeta,
\end{align*}
where in the last equality we have used \eqref{eu}. Substituting in the above identity, we find
\begin{align*}
\frac{d E^{(a)}_{z,T}}{dt}(t) & =  \frac{T-t}2 \int_0^\infty -\frac{1}{2(T-t)} u_\zeta^2 p^{(a)} \zeta^a d\zeta - \frac{T-t}2 \int_0^\infty u_\zeta^2 \Bz p^{(a)} \zeta^a d\zeta
\\ 
& - (T-t) \int_0^\infty u_{t}^2  p^{(a)} \zeta^a d\zeta - (T-t) \int_0^\infty u_{t} u_{\zeta}  p^{(a)}_\zeta \zeta^a d\zeta
\\
& = - (T-t) \int_0^\infty \bigg[u_{t} + u_\zeta \frac{p^{(a)}_\zeta}{p^{(a)}}\bigg]^2 p^{(a)} \zeta^a d\zeta + (T-t) \int_0^\infty u_{t} u_{\zeta}  p^{(a)}_\zeta \zeta^a d\zeta
\\
& + (T-t) \int_0^\infty u_\zeta^2 \frac{(p^{(a)}_\zeta)^2}{(p^{(a)})^2} p^{(a)} \zeta^a d\zeta  + \frac{T-t}2 \int_0^\infty -\frac{1}{2(T-t)} u_\zeta^2 p^{(a)} \zeta^a d\zeta
\\
& - \frac{T-t}2 \int_0^\infty u_\zeta^2 \Bz p^{(a)} \zeta^a d\zeta.
\end{align*}
We finally write
\begin{equation}\label{e2}
\frac{d E^{(a)}_{z,T}}{dt}(t) = - (T-t) \int_0^\infty \bigg[u_{t} + u_\zeta \frac{p^{(a)}_\zeta}{p^{(a)}}\bigg]^2 p^{(a)} \zeta^a d\zeta + G_{z,T}^{(a)}(t),
\end{equation}
where we have set
\begin{align}\label{G}
G_{z,T}^{(a)}(t) & = (T-t) \int_0^\infty u_\zeta^2 \bigg[\frac{(p^{(a)}_\zeta)^2}{(p^{(a)})^2} - \frac{1}{2(T-t)} - \frac{\Bz p^{(a)}}{2p^{(a)}}\bigg] p^{(a)} \zeta^a d\zeta
\\
& + (T-t) \int_0^\infty u_{t} u_{\zeta}  p^{(a)}_\zeta \zeta^a d\zeta.
\notag
\end{align}
Now, we integrate by parts in the second integral in the right-hand side of \eqref{G}, obtaining
\begin{align*}
& \int_0^\infty u_{t} u_{\zeta}  p^{(a)}_\zeta \zeta^a d\zeta = \int_0^\infty \Bz u\ u_{\zeta}  p^{(a)}_\zeta \zeta^a d\zeta = \int_0^\infty u_{\zeta\zeta} u_{\zeta}  p^{(a)}_\zeta \zeta^a d\zeta
\\
& + \int_0^\infty \frac a\zeta u_{\zeta} u_{\zeta}  p^{(a)}_\zeta \zeta^a d\zeta = \int_0^\infty \p_\zeta \left(\frac{u_{\zeta}^2}2\right)   p^{(a)}_\zeta \zeta^a d\zeta + \int_0^\infty \frac a\zeta u_{\zeta}^2  p^{(a)}_\zeta \zeta^a d\zeta
\\
& = \zeta^a p^{(a)}_\zeta \left(\frac{u_{\zeta}^2}2\right)\bigg|_{\zeta = 0}^\infty - \int_0^\infty \frac{u_{\zeta}^2}2   p^{(a)}_{\zeta\zeta} \zeta^a d\zeta - \int_0^\infty \frac a\zeta \frac{u_{\zeta}^2}2   p^{(a)}_\zeta \zeta^a d\zeta
\\
& + \int_0^\infty \frac a\zeta u_{\zeta}^2  p^{(a)}_\zeta \zeta^a d\zeta = \int_0^\infty \frac{u_{\zeta}^2}2  \frac a\zeta  p^{(a)}_\zeta \zeta^a d\zeta - \int_0^\infty \frac{u_{\zeta}^2}2   p^{(a)}_{\zeta\zeta} \zeta^a d\zeta.
\end{align*}
Substituting this result in \eqref{G}, we find
\begin{align}\label{G2}
G_{z,T}^{(a)}(t) & = (T-t) \int_0^\infty u_\zeta^2 \left[\frac{(p^{(a)}_\zeta)^2}{(p^{(a)})^2} - \frac{1}{2(T-t)} - \frac{\Bz p^{(a)}}{2p^{(a)}}\right] p^{(a)} \zeta^a d\zeta
\\
& + (T-t) \int_0^\infty  \frac{u_{\zeta}^2}2  \frac a\zeta p^{(a)}_\zeta \zeta^a d\zeta - (T-t) \int_0^\infty \frac{u_{\zeta}^2}2   p^{(a)}_{\zeta\zeta} \zeta^a d\zeta
\notag
\\
& = - (T-t) \int_0^\infty u_\zeta^2 \left[\frac{p^{(a)}_{\zeta\zeta}}{p^{(a)}} - \frac{(p^{(a)}_\zeta)^2}{(p^{(a)})^2} + \frac{1}{2(T-t)}\right] p^{(a)} \zeta^a d\zeta.
\notag
\end{align}

From \eqref{e2} we see that the proof of the theorem will be completed if we establish the following 

\medskip

\noindent \textbf{Claim:} for every $z >0$ and $0<t<T$, we have when $a\ge 0$
\begin{equation}\label{claim}
G_{z,T}^{(a)}(t) < 0.
\end{equation}
To prove \eqref{claim} it suffices to show that for every $z, \zeta >0$ and $0<t<T$, one has
\begin{equation}\label{claim2}
\frac{p^{(a)}_{\zeta\zeta}}{p^{(a)}} - \frac{(p^{(a)}_\zeta)^2}{(p^{(a)})^2} + \frac{1}{2(T-t)} > 0,
\end{equation}
and we thus turn to proving \eqref{claim2}. Now, \eqref{derz} gives
\begin{align}\label{eta}
\frac{(p^{(a)}_\zeta)^2}{(p^{(a)})^2} & = \left(y_{\frac{a-1}{2}}\left(\frac{z\zeta}{2(T-t)}\right) \frac{z}{2(T-t)} - \frac{\zeta}{2(T-t)}\right)^2.
\end{align} 
Similarly, we obtain
\begin{align*}
p^{(a)}_{\zeta\zeta} & = \bigg(y_{\frac{a-1}{2}}'\bigg(\frac{z\zeta}{2(T-t)}\bigg) \frac{z^2}{4(T-t)^2} - \frac{1}{2(T-t)}\bigg) p^{(a)} 
\\
& + \bigg(y_{\frac{a-1}{2}}\bigg(\frac{z\zeta}{2(T-t)}\bigg) \frac{z}{2(T-t)} - \frac{\zeta}{2(T-t)}\bigg)^2 p^{(a)}.
\end{align*}
From this formula and \eqref{eta}, we find
\begin{align*}
& \frac{p^{(a)}_{\zeta\zeta}}{2p^{(a)}} - \frac{(p^{(a)}_\zeta)^2}{(p^{(a)})^2} + \frac{1}{2(T-t)} = y'_{\frac{a-1}{2}}\left(\frac{z\zeta}{2(T-t)}\right) \frac{z^2}{4(T-t)^2}.
\end{align*}
It is at this point that the monotonicity of the Bessel quotient enters the stage. Invoking the crucial Proposition \ref{P:monotonicityy} we now see that, when $a\ge 0$ (or, equivalently, $\nu \ge - 1/2$), we have $y_{\frac{a-1}{2}}'\left(\frac{z\zeta}{2(T-t)}\right) > 0$ for any $z, \zeta>0$ and every $0<t <T$. This proves \eqref{claim2}, and therefore \eqref{claim}, thus completing the proof of the first part of Theorem \ref{T:struwe0}.

As for the second part, suppose that $z = 0$. Then, for any $a>-1$ we have from \eqref{pazero} above
\[
p^{(a)}(0,\zeta,T-t) = \frac{1}{2^{a} \G(\frac{a+1}2)} (T-t)^{-\frac{a+1}{2}} e^{-\frac{\zeta^2}{4(T-t)}}.
\] 
This gives
\begin{equation}\label{derpa}
p^{(a)}_\zeta = - \frac{\zeta}{2(T-t)} p^{(a)}, \ \ \ \ p^{(a)}_{\zeta\zeta} = \bigg(- \frac{1}{2(T-t)} + \frac{\zeta^2}{4(T-t)^2}\bigg)  p^{(a)}.
\end{equation}
From these formulas we immediately obtain
\[
\frac{p^{(a)}_{\zeta\zeta}}{p^{(a)}} - \frac{(p^{(a)}_\zeta)^2}{(p^{(a)})^2} + \frac{1}{2(T-t)} \equiv 0.
\]
In view of \eqref{G2} we conclude that $G^{(a)}_T \equiv 0$. Substitution in \eqref{e2} finally gives
\[
\frac{d E^{(a)}_{z,T}}{dt}(t) = - (T-t) \int_0^\infty \bigg[u_{t} + u_\zeta \frac{p^{(a)}_\zeta}{p^{(a)}}\bigg]^2 p^{(a)} \zeta^a d\zeta \le 0.
\]

\end{proof}

Next, we show that the Bessel semigroup satisfies a monotonicity property analogous to that proved by Poon for the standard heat equation in \cite{Po}. On a solution $u$ of \eqref{eu} satisfying \eqref{snc}, we introduce the quantity
\begin{equation}\label{L}
L_{z,T}^{(a)}(t) = \frac 12 \int_0^\infty u^2(\zeta,t) p^{(a)}(z,\zeta,T-t) \zeta^a d\zeta,
\end{equation}
where $z>0$ is fixed. We have the following result.

\begin{prop}\label{P:L'}
For every $0<t<T$ one has
\[
\frac{dL_{z,T}^{(a)}}{dt}(t) = \int_0^\infty u \bigg[u_{t} + u_\zeta \frac{p^{(a)}_\zeta}{p^{(a)}}\bigg] p^{(a)} \zeta^a d\zeta.
\]
\end{prop}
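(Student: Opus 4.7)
The plan is to differentiate $L_{z,T}^{(a)}(t)$ under the integral sign and then use the backward heat equation \eqref{backheat} satisfied by $\zeta \mapsto p^{(a)}(z,\zeta,T-t)$ together with a single integration by parts. Following the convention stated in the excerpt, I will assume that differentiation under the integral and the subsequent integration by parts are justified, with all boundary contributions vanishing.

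First I would write
\[
\frac{dL_{z,T}^{(a)}}{dt}(t) = \int_0^\infty u\, u_t\, p^{(a)}\, \zeta^a\, d\zeta + \frac 12 \int_0^\infty u^2 \p_t p^{(a)}(z,\zeta,T-t)\, \zeta^a\, d\zeta.
\]
Using \eqref{backheat}, the second term becomes $-\frac 12 \int_0^\infty u^2 \Bz p^{(a)}\, \zeta^a\, d\zeta$. Since $\Bz p^{(a)} \cdot \zeta^a = \p_\zeta(\zeta^a p^{(a)}_\zeta)$ by the very definition \eqref{bes} of the Bessel operator, an integration by parts against $u^2$ gives
\[
\int_0^\infty u^2 \Bz p^{(a)}\, \zeta^a\, d\zeta = \Big[u^2 \zeta^a p^{(a)}_\zeta\Big]_0^\infty - 2\int_0^\infty u\, u_\zeta\, p^{(a)}_\zeta\, \zeta^a\, d\zeta.
\]

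Next I would argue that the boundary terms vanish: at $\zeta = 0$ this follows from the vanishing Neumann condition \eqref{snc} on $u$ (and membership of $p^{(a)}$ in $\mathscr C^1_{(a)}(0,\infty)$, which gives $\zeta^a p^{(a)}_\zeta \to 0$ as $\zeta\to 0^+$, cf.\ \eqref{dzpazero}), while at $\zeta = \infty$ it follows from the Gaussian decay of $p^{(a)}$ together with the growth assumption on $u$ alluded to in items (a)--(b) of Section \ref{S:mono}. Plugging the resulting identity back yields
\[
\frac{dL_{z,T}^{(a)}}{dt}(t) = \int_0^\infty u\, u_t\, p^{(a)}\, \zeta^a\, d\zeta + \int_0^\infty u\, u_\zeta\, p^{(a)}_\zeta\, \zeta^a\, d\zeta = \int_0^\infty u \bigg[u_t + u_\zeta\, \frac{p^{(a)}_\zeta}{p^{(a)}}\bigg] p^{(a)}\, \zeta^a\, d\zeta,
\]
which is the claimed identity.

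The only real obstacle is the one already flagged in the text: rigorously justifying differentiation under the integral and the vanishing of the boundary terms, in particular controlling $u$ and $u_\zeta$ at infinity against the Gaussian weight. This is the same technical issue treated carefully in \cite{DGPT}, \cite{BG} and \cite{BDGP}, and in keeping with the stated convention of this section I would not address it here. Beyond that, the proof is a direct computation that, notably, does not invoke either the Soni inequality \eqref{soni0} or the monotonicity of the Bessel quotient; those ingredients enter only when $dL/dt$ is combined with the energy $E_{z,T}^{(a)}$ to study the frequency $N_z^{(a)}(r)$.
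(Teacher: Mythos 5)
Your proposal is correct and follows essentially the same route as the paper: differentiate under the integral, use the backward heat equation \eqref{backheat}, and perform one integration by parts against $u^2$ with all boundary terms discarded under the section's standing convention. The only cosmetic difference is that you integrate by parts directly in the divergence form $\zeta^a\Bz p^{(a)} = \p_\zeta(\zeta^a p^{(a)}_\zeta)$, whereas the paper expands $\Bz p^{(a)} = p^{(a)}_{\zeta\zeta} + \frac a\zeta p^{(a)}_\zeta$ and lets the drift terms cancel after the integration by parts — the two computations are identical.
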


\begin{proof}

Differentiating \eqref{L} and using \eqref{backheat}, we find
\begin{align*}
\frac{dL_{z,T}^{(a)}}{dt}(t) & =  \int_0^\infty  u u_t p^{(a)} \zeta^a d\zeta - \frac 12 \int_0^\infty  u^2 \Bz p^{(a)} \zeta^a d\zeta
\\
& = \int_0^\infty  u u_t p^{(a)} \zeta^a d\zeta - \frac 12 \int_0^\infty  u^2 p^{(a)}_{\zeta\zeta} \zeta^a d\zeta - \frac 12 \int_0^\infty  u^2 \frac a\zeta p^{(a)}_\zeta \zeta^a d\zeta.
\end{align*}
Next, we integrate by parts in the second integral in the right-hand side of the latter identity, obtaining
\begin{align*}
& - \frac 12 \int_0^\infty  u^2 p^{(a)}_{\zeta\zeta} \zeta^a d\zeta
= - \frac 12 u^2 \zeta^a p^{(a)}_{\zeta} \bigg|_0^\infty + \int_0^\infty u u_\zeta p^{(a)}_{\zeta} \zeta^a d\zeta + \frac 12 \int_0^\infty  u^2 \frac a\zeta p^{(a)}_{\zeta} \zeta^a d\zeta
\end{align*}
Substituting in the above equation we find
\begin{align*}
\frac{dL_{z,T}^{(a)}}{dt}(t) & = \int_0^\infty  u u_t p^{(a)} \zeta^a d\zeta + \int_0^\infty u u_\zeta p^{(a)}_{\zeta} \zeta^a d\zeta = \int_0^\infty u \bigg[u_{t} + u_\zeta \frac{p^{(a)}_\zeta}{p^{(a)}}\bigg] p^{(a)} \zeta^a d\zeta,
\end{align*}
which is the sought for conclusion.

\end{proof}

We next establish an alternative way of interpreting the function $E_{z,T}^{(a)}(t)$ in \eqref{e1} above.

\begin{lemma}\label{L:alt}
For every $t\in (0,T)$ one has
\[
E_{z,T}^{(a)}(t) = - \frac{T-t}2 \int_0^\infty u \bigg[u_{t} + u_\zeta \frac{p^{(a)}_\zeta}{p^{(a)}}\bigg] p^{(a)} \zeta^a d\zeta.
\]
\end{lemma}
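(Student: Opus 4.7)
The plan is to rewrite $E_{z,T}^{(a)}(t)$ by integrating by parts in the variable $\zeta$, then use the equation \eqref{eu} to convert the resulting $\Bz u$ into $u_t$. Specifically, the key identity to prove (after stripping off the common factor $(T-t)/2$) is
\[
\int_0^\infty u_\zeta^2\, p^{(a)}\, \zeta^a\, d\zeta \;=\; -\int_0^\infty u\bigl[u_t + u_\zeta\, p^{(a)}_\zeta/p^{(a)}\bigr] p^{(a)}\, \zeta^a\, d\zeta,
\]
so the strategy is to write the left-hand side as $\int_0^\infty u_\zeta \cdot (u_\zeta\, p^{(a)} \zeta^a)\, d\zeta$ and integrate by parts.

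The first step is to perform the integration by parts:
\[
\int_0^\infty u_\zeta\, u_\zeta\, p^{(a)}\zeta^a\, d\zeta = \bigl[u\, u_\zeta\, p^{(a)}\, \zeta^a\bigr]_0^\infty - \int_0^\infty u\, \p_\zeta\bigl(u_\zeta\, p^{(a)}\, \zeta^a\bigr)\, d\zeta.
\]
The boundary term at $\zeta = 0$ vanishes thanks to the Neumann condition \eqref{snc}, which guarantees $\zeta^a u_\zeta \to 0$ as $\zeta \to 0^+$; the boundary term at infinity vanishes from the Gaussian decay of $p^{(a)}$ built into \eqref{fs}, under the growth assumptions we are tacitly imposing on $u$ (as flagged in point (b) preceding the proof of Theorem \ref{T:struwe0}).

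The second step is to expand the derivative inside the remaining integral:
\[
\p_\zeta\bigl(u_\zeta\, p^{(a)}\, \zeta^a\bigr) = u_{\zeta\zeta}\, p^{(a)}\, \zeta^a + u_\zeta\, p^{(a)}_\zeta\, \zeta^a + \frac{a}{\zeta} u_\zeta\, p^{(a)}\, \zeta^a = \bigl(\Bz u\bigr) p^{(a)} \zeta^a + u_\zeta\, p^{(a)}_\zeta\, \zeta^a,
\]
where we recognized $u_{\zeta\zeta} + (a/\zeta) u_\zeta = \Bz u$ from \eqref{bes}. Substituting and using the PDE $u_t = \Bz u$ from \eqref{eu}, we arrive at
\[
\int_0^\infty u_\zeta^2\, p^{(a)}\, \zeta^a\, d\zeta = -\int_0^\infty u\, u_t\, p^{(a)}\, \zeta^a\, d\zeta - \int_0^\infty u\, u_\zeta\, p^{(a)}_\zeta\, \zeta^a\, d\zeta,
\]
which is exactly the required identity after factoring out $p^{(a)}$ inside the brackets. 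Multiplying both sides by $(T-t)/2$ yields the lemma.

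The only real obstacle here is the justification of the boundary term at $\zeta = 0$, since the weight $\zeta^a$ is singular when $-1 < a < 0$ and $u_\zeta$ need not vanish pointwise there. However, the Neumann condition \eqref{snc} is precisely tailored to kill this boundary contribution, and together with the assumed regularity and growth (as noted in (a)--(b) of Section \ref{S:mono}) this makes the integration by parts rigorous. The behavior at $\zeta = \infty$ is unproblematic owing to the exponential decay of $p^{(a)}$ and the standing growth assumptions on $u$.
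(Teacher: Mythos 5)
Your proposal is correct and follows essentially the same route as the paper: integrating by parts in $\int_0^\infty u_\zeta^2\, p^{(a)} \zeta^a\, d\zeta$, recognizing $u_{\zeta\zeta} + \frac{a}{\zeta}u_\zeta = \Bz u$, and invoking the equation \eqref{eu} to replace $\Bz u$ by $u_t$, with the boundary terms discarded exactly as the paper stipulates. Your added remark that the Neumann condition \eqref{snc} kills the boundary contribution at $\zeta = 0$ is a slightly more explicit justification than the paper offers, but the argument is otherwise the same.
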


\begin{proof}
We have from \eqref{e1}
\begin{align*}\label{e1}
E^{(a)}_{z,T}(t) & = \frac{T-t}2 \int_0^\infty u_\zeta u_\zeta p^{(a)} \zeta^a d\zeta = \frac{T-t}2 \zeta^a u_\zeta u p^{(a)}\bigg|_{\zeta = 0}^\infty - \frac{T-t}2 \int_0^\infty u_{\zeta\zeta} u p^{(a)} \zeta^a d\zeta 
\\
& - \frac{T-t}2 \int_0^\infty u_\zeta u p^{(a)}_\zeta \zeta^a d\zeta - \frac{T-t}2 \int_0^\infty \frac a\zeta u_\zeta u p^{(a)} \zeta^a d\zeta
\\
& = - \frac{T-t}2 \int_0^\infty u \Bz u\ p^{(a)} \zeta^a d\zeta - \frac{T-t}2 \int_0^\infty u u_\zeta  p^{(a)}_\zeta \zeta^a d\zeta
\\
& = - \frac{T-t}2 \int_0^\infty u \bigg[u_{t} + u_\zeta \frac{p^{(a)}_\zeta}{p^{(a)}}\bigg] p^{(a)} \zeta^a d\zeta,
\end{align*}
where in the last equality we have used \eqref{eu}. This proves the lemma.

\end{proof}

Now, we let $T = 0$, and for fixed $z>0$ and any $r>0$, we consider the two quantities
\begin{equation}\label{HI}
H_{z}^{(a)}(r) \overset{def}{=} L_{z,0}^{(a)}(-r^2) = L_{z}^{(a)}(-r^2),\ \ \ \ \ \ \ \ \ \ \ I_z^{(a)}(r) \overset{def}{=} E_{z,0}^{(a)}(-r^2) = E_{z}^{(a)}(-r^2).
\end{equation}
We note that from \eqref{L} and \eqref{e1} we have
\[
H_z^{(a)}(r) = \frac 12 \int_0^\infty u^2(\zeta,t) p^{(a)}(z,\zeta,r^2) \zeta^a d\zeta,
\]
and
\[
I_z^{(a)}(r)  = \frac{r^2}2 \int_0^\infty u_\zeta^2(\zeta,t) p^{(a)}(z,\zeta,r^2) \zeta^a d\zeta.
\]

\begin{definition}\label{D:frequency}
We define the \emph{frequency centered at $z$} of a solution $u$ to \eqref{eu}, satisfying \eqref{snc}, by the following equation:
\[
N_z^{(a)}(r) = \frac{I_z^{(a)}(r)}{H_z^{(a)}(r)},\ \ \ \ \ \ \ \ \ \ \ \ 0<r<\sqrt T.
\]
When $z = 0$, we simply write $N^{(a)}(r)$, instead of $N_0^{(a)}(r)$. 
\end{definition}

We next establish the second main result of this section.

\begin{proof}[Proof of Theorem \ref{T:poon0}]
Suppose first that $z>0$. Using Definition \ref{D:frequency} we see that the logarithmic derivative of $N_z^a$ is given by
\begin{align}\label{logN}
& \frac{d}{dr} \log N_z^{(a)}(r) = \frac{\frac{d I_z^{(a)}}{dr}(r)}{I_z^{(a)}(r)} - \frac{\frac{d H_z^{(a)}}{dr}(r)}{H_z^{(a)}(r)}.
\end{align}
By \eqref{HI} we have
\begin{equation}\label{H'I'}
\frac{d H_z^{(a)}}{dr}(r) = - 2r \frac{d L_z^{(a)}}{dt}(-r^2),\ \ \ \ \ \ \ \ \ \ \ 
\frac{d I_z^{(a)}}{dr}(r) = - 2r \frac{d E_z^{(a)}}{dt}(-r^2).
\end{equation}
Combining the first identity in \eqref{H'I'} with  Proposition \ref{P:L'}, we have
\begin{equation}\label{HH}
\frac{\frac{d H_z^{(a)}}{dr}(r)}{H_z^{(a)}(r)} = \frac{- 2r \int_0^\infty u \bigg[u_{t} + u_\zeta \frac{p^{(a)}_\zeta}{p^{(a)}}\bigg] p^{(a)} \zeta^a d\zeta}{\frac 12 \int_0^\infty u^2 p^{(a)} \zeta^a d\zeta}.
\end{equation}
From the second identity in \eqref{H'I'} and \eqref{e2}, we now find
\begin{align*}
& \frac{d I_z^{(a)}}{dr}(r)  = 2r^3  \int_0^\infty \bigg[u_{t} + u_\zeta \frac{p^{(a)}_\zeta}{p^{(a)}}\bigg]^2 p^{(a)} \zeta^a d\zeta - 2r G_z^{(a)}(-r^2),
\end{align*}
where $G_z^{(a)} = G_{z,0}^{(a)}$ is given by \eqref{G2}. This equation and Lemma \ref{L:alt} give
\begin{align*}
& \frac{\frac{d I_z^{(a)}}{dr}(r)}{I_z^{(a)}(r)} = \frac{2r^3  \int_0^\infty \bigg[u_{t} + u_\zeta \frac{p^{(a)}_\zeta}{p^{(a)}}\bigg]^2 p^{(a)} \zeta^a d\zeta}{- \frac{r^2}2 \int_0^\infty u \bigg[u_{t} + u_\zeta \frac{p^{(a)}_\zeta}{p^{(a)}}\bigg] p^{(a)} \zeta^a d\zeta} - \frac{2r G_z^{(a)}(-r^2)}{\frac{r^2}2 \int_0^\infty u_\zeta^2(\zeta,t) p^{(a)}(z,\zeta,r^2) \zeta^a d\zeta}.
\end{align*}
We emphasize in the right-hand side of the latter equation we have used Lemma \ref{L:alt} to express $E_z^{(a)}$ in the denominator of the first quotient, whereas we have used the definition \eqref{e1} to write the same quantity in the denominator of the second quotient. It is crucial that there is a minus sign in front of the second quotient. Since the denominator is positive, in view of \eqref{claim} above we can thus conclude that, given $z>0$, then for every $r>0$ we have
\begin{equation}\label{winning}
\frac{\frac{d I_z^{(a)}}{dr}(r)}{I_z^{(a)}(r)} > \frac{4r  \int_0^\infty \bigg[u_{t} + u_\zeta \frac{p^{(a)}_\zeta}{p^{(a)}}\bigg]^2 p^{(a)} \zeta^a d\zeta}{-  \int_0^\infty u \bigg[u_{t} + u_\zeta \frac{p^{(a)}_\zeta}{p^{(a)}}\bigg] p^{(a)} \zeta^a d\zeta}, \  \ \ \ \ \ \ \ \ a \ge 0. 
\end{equation}
Moreover, since in the end of the proof of Theorem \ref{T:struwe0} we showed that, when $z = 0$, then $G^{(a)} \equiv 0$ for every $a>-1$, we also infer that
\begin{equation}\label{winning2}
\frac{\frac{d I^{(a)}}{dr}(r)}{I^{(a)}(r)} = \frac{4 r  \int_0^\infty \bigg[u_{t} + u_\zeta \frac{p^{(a)}_\zeta}{p^{(a)}}\bigg]^2 p^{(a)} \zeta^a d\zeta}{- \int_0^\infty u \bigg[u_{t} + u_\zeta \frac{p^{(a)}_\zeta}{p^{(a)}}\bigg] p^{(a)} \zeta^a d\zeta}, \  \ \ \ \ \ \ \ \ a > -1. 
\end{equation}
Combining \eqref{logN}, \eqref{HH}, \eqref{winning} and \eqref{winning2}, we finally conclude that for every $z>0$ and $r>0$, we have for every $a\ge 0$,
\begin{align}\label{logN2}
\frac{d}{dr} \log N_z^{(a)}(r) & > \frac{4 r  \int_0^\infty \bigg[u_{t} + u_\zeta \frac{p^{(a)}_\zeta}{p^{(a)}}\bigg]^2 p^{(a)} \zeta^a d\zeta}{- \int_0^\infty u \bigg[u_{t} + u_\zeta \frac{p^{(a)}_\zeta}{p^{(a)}}\bigg] p^{(a)} \zeta^a d\zeta}
\\
& - \frac{- 4r \int_0^\infty u \bigg[u_{t} + u_\zeta \frac{p^{(a)}_\zeta}{p^{(a)}}\bigg] p^{(a)} \zeta^a d\zeta}{\int_0^\infty u^2 p^{(a)} \zeta^a d\zeta}.
\notag
\end{align}

If instead $z = 0$, then we obtain for every $a>-1$
\begin{align}\label{logN3}
\frac{d}{dr} \log N^{(a)}(r) & = \frac{4 r  \int_0^\infty \bigg[u_{t} + u_\zeta \frac{p^{(a)}_\zeta}{p^{(a)}}\bigg]^2 p^{(a)} \zeta^a d\zeta}{-  \int_0^\infty u \bigg[u_{t} + u_\zeta \frac{p^{(a)}_\zeta}{p^{(a)}}\bigg] p^{(a)} \zeta^a d\zeta}
\\
& - \frac{- 4 r \int_0^\infty u \bigg[u_{t} + u_\zeta \frac{p^{(a)}_\zeta}{p^{(a)}}\bigg] p^{(a)} \zeta^a d\zeta}{\int_0^\infty u^2 p^{(a)} \zeta^a d\zeta}.
\notag
\end{align}
Since by the inequality of Cauchy-Schwarz the right-hand side in \eqref{logN2}, \eqref{logN3} is always $\ge 0$, we finally obtain
\begin{equation}\label{logNV}
\frac{d}{dr} \log N^{(a)}(r) \begin{cases} >0,\ \ \ \ \ \ \text{if}\ z>0,\ \ \ a\ge 0,
\\
 \ge 0,\ \ \ \ \ \ \text{if}\ z=0,\ \ \ a>- 1.
 \end{cases}
 \end{equation}
This almost concludes the proof of the theorem. We are left with showing the last part, i.e., the characterization of the case of constant frequency when $z = 0$. If for $\kappa > 0$ we have $u(\la \zeta,\la^2 t) = \la^\kappa u(\zeta,t)$ for every $\la>0$, differentiating with respect to $\la$ and setting $\la = 1$, we find $\zeta u_\zeta + 2 t u_t = \kappa u$. If we denote by $Z = Z(\zeta,t) = \zeta \p_\zeta + 2t \p_t$ the generator of the parabolic dilations, then we can write the latter equation as $Zu = \kappa u$. Now, 
\[
Zu(\zeta,-r^2) = \zeta u_\zeta - 2 r^2 u_t.
\]
On the other hand, with $p^{(a)} = p^{(a)}(0,\zeta,r^2)$, we find from \eqref{derpa}
\[
p^{(a)}_\zeta = - \frac{\zeta}{2r^2} p^{(a)}.
\]
Therefore, 
\begin{equation}\label{Zur2}
u_t + u_\zeta \frac{p^{(a)}_\zeta}{p^{(a)}} = u_t - \frac{\zeta}{2 r^2}u_\zeta = \frac{Zu(\zeta,-r^2)}{2(-r^2)}
\end{equation}
Since by the hypothesis we have $Zu(\zeta,-r^2) = \kappa u(\zeta,-r^2)$, we find that
\begin{align*}
I^{(a)}(r) & = - \frac{r^2}{2} \int_0^\infty \frac{u(\zeta,-r^2) Zu(\zeta,-r^2)}{- 2r^2} p^{(a)} \zeta^a d\zeta
\\
& = \frac \kappa4 \int_0^\infty u^2 p^{(a)} \zeta^a d\zeta = \frac \kappa2 H^{(a)}(r).
\end{align*}
In conclusion, if $u$ is homogeneous of degree $\kappa$, then $N^{(a)}(r) \equiv \frac{\kappa}2$.

Vice-versa, suppose that $N^{(a)}(r) \equiv \frac{\kappa}2$. We want to show that $u$ must be parabolically homogeneous of degree $\kappa$, i.e., $Zu = \kappa u$. From our assumption, we have $\frac{d}{dr} \log N^{(a)}(r) \equiv 0$. If $z = 0$,  then by the second option in \eqref{logNV} we obtain that for any $a>-1$ there must be equality in the Cauchy-Schwarz inequality in \eqref{logN3}. This means that for every $r>0$ there exists $\alpha(r)\in \R$ such that for every $\zeta>0$
\[
\bigg[u_{t} + u_\zeta \frac{p^{(a)}_\zeta}{p^{(a)}}\bigg](\zeta,-r^2) = \alpha(r) u(\zeta,-r^2).
\]
From \eqref{Zur2} we can rewrite this equation as
\begin{equation}\label{zum}
\frac{Zu(\zeta,-r^2)}{2(-r^2)} = \alpha(r) u(\zeta,-r^2).
\end{equation}
This information implies that
\[
\frac{I^{(a)}(r)}{H^{(a)}(r)} = - r^2 \alpha(r).
\]
We conclude that $- r^2 \alpha(r) \equiv \kappa/2$, hence 
\[
\alpha(r) \equiv - \frac{\kappa}{2r^2}.
\]
From \eqref{zum} we finally have that for every $\zeta, r>0$ it must be $Zu(\zeta,-r^2) = \kappa u(\zeta,-r^2)$, which is the sought for conclusion.

\end{proof}

In closing we note that the frequency centered at any point $z>0$ does not detect homogeneity.  One reason for this is that the critical information \eqref{Zur2} above is no longer available when the pole of the Bessel-Gaussian $p^{(a)}(z,\zeta,r^2)$ is at a point $z>0$. A second (related) obstruction is in the fact that from the first option in \eqref{logNV} we obtain for $r>0$ the strict inequality
\[
\frac{d}{dr} \log N^{(a)}(r) >0.
\]


\section{Appendix: the modified Bessel function $I_\nu(z)$}\label{S:appendix}

In this section we collect some facts concerning the modified Bessel function $I_\nu$ which are probably well-known to the special functions community, but not so to people working in pde's. For the reader's convenience we also provide a proof of Propositions \ref{P:soni} and \ref{P:monotonicityy}, which are crucial to our work. 

We recall that  the modified Bessel function of the first kind and order $\nu\in \mathbb C$ is defined  by the series
\begin{equation}\label{I}
I_\nu(z) = \sum_{k=0}^\infty \frac{(z/2)^{\nu+2k}}{\G(k+1) \G(k+\nu+1)},\ \ \ \ \ \ \ |z|<\infty,\ |\arg z| < \pi,
\end{equation}
in the complex plane cut along the negative real axis.
From \eqref{I} we see that when $\nu>-1$, then $\G(k+\nu+1)>0$ for any $k\in \mathbb N\cup\{0\}$, and therefore $I_\nu(z)>0$ when $z>0$. The same is true also when $\nu = -1$. 

From \eqref{I} the behavior of $I_\nu(z)$ for small values of $z>0$ easily follows, and we have
\begin{equation}\label{smallz}
\underset{z\to 0^+}{\lim} 2^\nu \G(\nu+1) z^{-\nu} I_\nu(z) = 1.
\end{equation}

The  behavior of $I_\nu(z)$ for large values of $|z|$ is given by the following asymptotic series first found by Hankel
\begin{equation}\label{ab}
I_\nu(z) = \frac{e^z}{(2\pi z)^{1/2}} \left[\sum_{k=0}^n \frac{(-1)^k (\nu,k)}{(2z)^k} + O(|z|^{-n-1})\right], \ \ \ \ \ |\arg z| \le \frac{\pi}2 - \delta,
\end{equation}
where we have denoted by $(\nu,k)$ the Hankel coefficients
\begin{equation}\label{nuk}
(\nu,k) = \frac{(-1)^k \G(1/2 - \nu + k) \G(1/2 + \nu + k)}{\G(k+1) \G(1/2-\nu) \G(1/2+\nu)},\ \ \ \  \ \ \text{so that}\ \ (\nu,0) = 1,
\end{equation}
see (5.11.10) on p. 123 in \cite{Le}, or 9.7.1 on p. 377 in \cite{AS}.
Note that \eqref{ab} implies in particular that for large $z>0$
\begin{equation}\label{ab2}
I_\nu(z) =  \frac{e^z}{(2\pi z)^{1/2}} \left(1+ O(|z|^{-1})\right).
\end{equation}
We also note the following Poisson representation of $I_\nu(z)$
\begin{equation}\label{prepInu}
I_\nu(z)  = \frac{(z/2)^\nu}{\sqrt{\pi} \G(\nu+1/2)} \int_{-1}^1 e^{zt} (1-t^2)^{\nu-1/2} dt,\ \ \ \ \ \ \ \ \ \Re \nu > - \frac 12,
\end{equation}
see  (10) on p. 81 in \cite{ET}.

For later use, we recall the  recurrence relations satisfied by $I_\nu(z)$, see e.g. (5.7.9) on p. 110 in \cite{Le}
\begin{equation}\label{rec}
\frac{d}{dz}[z^{\nu+1} I_{\nu+1}(z)] = z^{\nu+1} I_{\nu}(z),\ \ \ \ \ \ \ \frac{d}{dz}[z^{-\nu} I_{\nu}(z)] = z^{-\nu} I_{\nu+1}(z).
\end{equation}
For $z>0$ we have $I_\nu(z)>0$ for $\nu>-1$, and therefore we can rewrite \eqref{rec} as follows
\begin{equation}\label{rec2}
\frac{I_{\nu+1}'(z)}{I_\nu(z)} = 1 - \frac{\nu+1}z \frac{I_{\nu+1}(z)}{I_\nu(z)},\ \ \ \ \ \ \ \ \ \frac{I_{\nu}'(z)}{I_\nu(z)} = \frac{I_{\nu+1}(z)}{I_\nu(z)} + \frac{\nu}z,
\end{equation}

We next establish for the function $I_\nu$ a generalization of Weber's integral for the function $J_\nu$ (for the latter see (5.15.2) on p. 132 in \cite{Le}, or also formula 4. on p. 717 in \cite{GR}, or formula (10) on p. 29 in \cite{ET}). Such result is needed in the proof of Proposition \ref{P:sc} above.

\begin{lemma}\label{L:weber}
Let $\alpha >0$, $\Re \nu > -1$, then
\[
\int_0^\infty x^{\nu + 1} I_\nu(x) e^{-\alpha x^2} dx = 2^{-\nu-1} \alpha^{-\nu-1} e^{1/4\alpha}.
\]
\end{lemma}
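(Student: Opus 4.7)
The plan is to prove this by expanding $I_\nu$ as a power series using its definition \eqref{I}, integrating term by term, and recognizing the resulting series as the Taylor expansion of the exponential.

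First I would write, using \eqref{I},
\[
x^{\nu+1} I_\nu(x) = \sum_{k=0}^\infty \frac{x^{2\nu+2k+1}}{2^{\nu+2k}\,\Gamma(k+1)\,\Gamma(k+\nu+1)},
\]
and insert this into the integral. Assuming we can interchange sum and integral, each term reduces to a standard Gamma integral: with the substitution $u = \alpha x^2$ one checks that for $\Re\nu > -1$,
\[
\int_0^\infty x^{2\nu+2k+1} e^{-\alpha x^2}\,dx = \frac{\Gamma(\nu+k+1)}{2\,\alpha^{\nu+k+1}}.
\]
Plugging this in, the factor $\Gamma(k+\nu+1)$ cancels, and after pulling out $2^{-\nu-1}\alpha^{-\nu-1}$ we are left with
\[
2^{-\nu-1}\alpha^{-\nu-1} \sum_{k=0}^\infty \frac{1}{k!\,(4\alpha)^k} \;=\; 2^{-\nu-1}\alpha^{-\nu-1} e^{1/(4\alpha)},
\]
which is the desired identity.

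The only real point to justify is the interchange of summation and integration. For this I would invoke Tonelli's theorem in the real case $\nu > -1$: all terms in the series for $x^{\nu+1}I_\nu(x)$ are nonnegative on $(0,\infty)$ (since $\Gamma(k+\nu+1)>0$ for $k\ge 0$ when $\nu > -1$), and so are the weights $e^{-\alpha x^2}$, so the exchange is automatic and both sides equal the finite quantity computed above. For complex $\nu$ with $\Re\nu > -1$ one applies the same argument to $|x^{\nu+1}I_\nu(x)|e^{-\alpha x^2}$ — bounded termwise by the real-$\nu$ computation with $\nu$ replaced by $\Re\nu$ — to obtain absolute integrability, and then Fubini's theorem gives the interchange.

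I do not anticipate a genuine obstacle here; the one small subtlety is checking that $\Gamma(\nu+k+1)$ in the numerator (coming from the Gamma integral) exactly matches $\Gamma(k+\nu+1)$ in the denominator (coming from the series for $I_\nu$), which is what produces the clean geometric collapse to $\sum (4\alpha)^{-k}/k!$. Everything else is bookkeeping of powers of $2$ and $\alpha$.
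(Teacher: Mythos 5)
Your proposal is correct and follows essentially the same route as the paper: expand $I_\nu$ by its defining series \eqref{I}, integrate term by term via the substitution $u=\alpha x^2$ so that the $\Gamma(k+\nu+1)$ factors cancel, and sum the resulting exponential series to get $2^{-\nu-1}\alpha^{-\nu-1}e^{1/4\alpha}$. Your explicit Tonelli/Fubini justification of the interchange is a welcome touch of extra care (the paper simply verifies integrability of the integrand near $0$ and $\infty$ and integrates term by term), but it is the same argument.
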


\begin{proof}
We begin by observing that in view of \eqref{smallz} we have near $x = 0$
\[
|x^{\nu+1} I_\nu(x)| = O(|x|^{2\nu+1}),
\]
which is integrable since $\Re \nu>-1$. Furthermore, from \eqref{ab2} and $\alpha>0$ we see that the integrand is absolutely convergent near $\infty$. Thus the integrand belongs to $L^1(\R^+,dx)$. To compute the integral we apply \eqref{I} and integrate term by term obtaining
\begin{align*}
& \int_0^\infty x^{2\nu + 1} I_\nu(x) e^{-\alpha x^2} dx  = \sum_{k=0}^\infty \frac{1}{2^{\nu+2k} \G(k+1) \G(k+\nu+1)} \int_0^\infty x^{2\nu + 2k + 2} e^{-\alpha x^2} \frac{dx}{x} 
\\
& (\text{change of variable}\ y = \alpha x^2)
\\
& = \sum_{k=0}^\infty \frac{1}{2^{\nu+2k+1} \G(k+1) \G(k+\nu+1)} \int_0^\infty \left(\frac y\alpha\right)^{\nu + k + 1} e^{-y} \frac{dy}{y} 
\\
& = \sum_{k=0}^\infty \frac{1}{2^{\nu+2k+1} \G(k+1) \alpha^{k+\nu+1}} = \frac{1}{2^{\nu+1} \alpha^{\nu+1}} \sum_{k=0}^\infty \frac{1}{2^{2k} \G(k+1) \alpha^{k}}
\\
& = \frac{e^{1/4\alpha}}{2^{\nu+1} \alpha^{\nu+1}}.
\end{align*}

\end{proof}

For every $\nu>-1$ we next introduce the \emph{Bessel quotient} 
\begin{equation}\label{bq}
y_\nu(z) =  \frac{I_{\nu+1}(z)}{I_\nu(z)},\ \ \ \ \ \ \ \ \ \ z>0.
\end{equation}
From \eqref{smallz} we easily see that
\begin{equation}\label{zerozero}
y_\nu(z) \cong \frac{z}{2(\nu+1)},\ \ \ \ \text{as}\ z\to 0^+,
\end{equation}
and thus in particular
\begin{equation}\label{limitzero}
\underset{z\to 0^+}{\lim} y_\nu(z) = 0.
\end{equation}

Hankel's asymptotic formula \eqref{ab}, or also \eqref{ab2}, imply in particular that the line $y=1$ is a horizontal asymptote on $(0,\infty)$ for the function $z\to y_\nu(z)$, i.e., 
\begin{equation}\label{limitinfty}
\underset{z\to \infty}{\lim} y_\nu(z) = 1.
\end{equation}

Since as we have observed above one has $I_\nu(z)>0$ for every $z>0$ and $\nu>-1$, we infer that the function $z\to y_\nu(z)$ is continuous on $[0,\infty)$. Therefore, \eqref{limitinfty} and \eqref{limitzero} imply that for every $\nu>-1$ there exists $M_\nu\in (0,\infty)$ such that
\begin{equation}\label{boundedquotient}
\underset{z>0}{\sup}\ y_\nu(z) = M_\nu.
\end{equation}

The functions $I_\nu$ and $y_\nu$ are connected by the following result.

\begin{prop}\label{P:conn}
For every $\nu>-1$ and $z>0$ one has
\[
I_\nu(z) = \frac{1}{\G(\nu+1)} \left(\frac z2\right)^\nu \exp\left(\int_0^z y_\nu(t) dt\right).
\]
\end{prop}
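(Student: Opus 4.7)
The plan is to derive the formula by recognizing the integrand $y_\nu$ as a logarithmic derivative. Specifically, the second recurrence in \eqref{rec} says
\[
\frac{d}{dz}[z^{-\nu} I_{\nu}(z)] = z^{-\nu} I_{\nu+1}(z),
\]
and dividing both sides by $z^{-\nu} I_\nu(z)$ (which is strictly positive for $z>0$ since $\nu>-1$) yields
\[
\frac{d}{dz} \log\!\bigl(z^{-\nu} I_\nu(z)\bigr) = \frac{I_{\nu+1}(z)}{I_\nu(z)} = y_\nu(z).
\]
This is the key identity; the rest is bookkeeping.

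Next, I would integrate this relation from $0$ to $z$. Integrability of $y_\nu$ at the origin is not a problem: by \eqref{zerozero}, $y_\nu(t) \sim t/[2(\nu+1)]$ as $t \to 0^+$, so $\int_0^z y_\nu(t)\,dt$ is finite (and vanishes like $z^2$). To fix the constant of integration, I would appeal to the small-$z$ asymptotics \eqref{smallz}, which gives
\[
\lim_{z\to 0^+} z^{-\nu} I_\nu(z) = \frac{1}{2^\nu\,\G(\nu+1)},
\]
so $\lim_{z\to 0^+} \log\bigl(z^{-\nu} I_\nu(z)\bigr) = -\log\!\bigl(2^\nu \G(\nu+1)\bigr)$.

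Putting these pieces together yields
\[
\log\!\bigl(z^{-\nu} I_\nu(z)\bigr) \;=\; -\log\!\bigl(2^\nu \G(\nu+1)\bigr) + \int_0^z y_\nu(t)\,dt,
\]
and exponentiating and multiplying by $z^\nu$ gives the claimed identity. I do not anticipate a real obstacle: the proof is essentially a one-line computation modulo the well-definedness of $\int_0^z y_\nu$, which is immediate from \eqref{zerozero}. The only thing to be careful about is that one actually needs $I_\nu(z)>0$ on $(0,\infty)$ to take logarithms, and this is guaranteed by the series \eqref{I} together with the assumption $\nu>-1$, as already noted in the paragraph following \eqref{I}.
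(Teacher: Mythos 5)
Your proof is correct and follows essentially the same route as the paper: both establish the key identity $\frac{d}{dz}\log\bigl(z^{-\nu}I_\nu(z)\bigr) = y_\nu(z)$ from the recurrence relations, integrate from the origin using the small-$z$ asymptotics \eqref{smallz} to fix the constant, and exponentiate. Your extra remark on the convergence of $\int_0^z y_\nu(t)\,dt$ via \eqref{zerozero} is a minor refinement of the paper's limiting argument with $\varepsilon \to 0^+$, not a different method.
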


\begin{proof}
If we take the logarithmic derivative of the function $I_\nu$, we find
\begin{equation}\label{logder}
\frac{d}{dz} \log I_\nu(z) = \frac{I_\nu'(z)}{I_\nu(z)} =  \frac{I_{\nu+1}(z)}{I_\nu(z)} + \frac{\nu}z = y_\nu(z)+ \frac{d}{dz} \log z^\nu,
\end{equation}
where in the second to the last equality we have used the second equation in \eqref{rec2} above. We rewrite \eqref{logder} in the following form
\begin{equation}\label{logder2}
\frac{d}{dt} \log \frac{I_\nu(t)}{t^\nu} = y_\nu(t), \ \ \ \ \ \ \ \ t>0,
\end{equation}
and integrate it between $\ve$ and $z$, obtaining
\[
\log \frac{I_\nu(z)}{z^\nu} - \log \frac{I_\nu(\ve)}{\ve^\nu} = \int_\ve^z y_\nu(t) dt.
\]
Taking the limit as $\ve\to 0^+$, using \eqref{smallz}, and exponentiating, we reach the desired conclusion. 

\end{proof}

\begin{lemma}\label{L:bq}
For every $\nu> - 1$ the Bessel quotient $y(z) = y_\nu(z)$ satisfies the following Cauchy problem for the Riccati equation
\begin{equation}\label{deBQ}
y'(z) = 1 - y(z)^2 - \frac{2\nu+1}{z} y(z),\ \ \ \ \ \ \ \ \ \ \ y(0) = 0.
\end{equation}
\end{lemma}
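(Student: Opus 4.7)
The plan is to prove the ODE by a direct computation using the quotient rule applied to $y_\nu = I_{\nu+1}/I_\nu$ together with the two recurrence relations already recorded in equation \eqref{rec2}. First I would write
\[
y'(z) = \frac{d}{dz}\left(\frac{I_{\nu+1}(z)}{I_\nu(z)}\right) = \frac{I'_{\nu+1}(z)}{I_\nu(z)} - y(z)\,\frac{I'_\nu(z)}{I_\nu(z)}.
\]

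Next, I would feed in the identities from \eqref{rec2}, namely $I'_{\nu+1}(z)/I_\nu(z) = 1 - \frac{\nu+1}{z} y(z)$ and $I'_\nu(z)/I_\nu(z) = y(z) + \frac{\nu}{z}$, obtaining
\[
y'(z) = 1 - \frac{\nu+1}{z} y(z) - y(z)\left(y(z) + \frac{\nu}{z}\right) = 1 - y(z)^2 - \frac{2\nu+1}{z} y(z),
\]
which is precisely the Riccati equation in \eqref{deBQ}.

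For the initial condition, the asymptotic \eqref{zerozero} gives $y_\nu(z) \sim z/(2(\nu+1))$ as $z \to 0^+$, hence $y_\nu(0) = 0$ (understood as the limit from the right, consistent with \eqref{limitzero}).

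There is really no obstacle here: once the two identities in \eqref{rec2} are available, the lemma reduces to an application of the quotient rule and a one-line algebraic simplification. The only mild subtlety worth flagging is that the equation is singular at $z = 0$ (the coefficient $(2\nu+1)/z$ blows up), so the ``initial condition'' $y(0)=0$ should be interpreted as a limit and is not by itself sufficient to single out $y_\nu$ among solutions of the Riccati equation on $(0,\infty)$; for our purposes, however, the identity is all that is needed.
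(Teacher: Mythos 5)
Your proof is correct and follows exactly the paper's own argument: differentiate the quotient $I_{\nu+1}/I_\nu$, substitute the recurrence relations \eqref{rec2}, and invoke \eqref{limitzero} for the initial condition. The remark that $y(0)=0$ is to be read as a right-hand limit is a sensible (and harmless) addition.
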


\begin{proof}
The initial condition $y(0) = 0$ is nothing but \eqref{limitzero}. The equation \eqref{deBQ} is derived in the following way. Recall the recurrence relations \eqref{rec2} above.
Since the rule for differentiating a quotient gives 
\[
y'(z) = \frac{I_{\nu+1}'(z)}{I_\nu(z)} - \frac{I_{\nu}'(z)}{I_\nu(z)} \frac{I_{\nu+1}(z)}{I_\nu(z)},
\]
substituting \eqref{rec2} into the latter expression we easily obtain the differential equation in \eqref{deBQ}.  

\end{proof}

One has the asymptotic formula on $(0,\infty)$.

\begin{prop}\label{P:asy}
Let $\nu>-1$. One has
\begin{equation}\label{asy}
\underset{z\to \infty}{\lim} z\left[1-y_\nu(z)\right] = \frac{2\nu+1}2.
\end{equation}
\end{prop}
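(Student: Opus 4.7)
The plan is to derive the claim from Hankel's asymptotic formula \eqref{ab}, expanded to order $n=1$, applied simultaneously to $I_\nu(z)$ and $I_{\nu+1}(z)$. The point is that $\frac{2\nu+1}{2}$ in \eqref{asy} is precisely the coefficient of $z^{-1}$ in the expansion of $1 - y_\nu(z)$ at infinity, so isolating that single coefficient suffices, and no higher-order Hankel terms are needed.

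First I would evaluate the Hankel coefficient $(\nu,1)$ using \eqref{nuk}. Applying the identity $\Gamma(s+1) = s\,\Gamma(s)$ to both $\Gamma(3/2-\nu)$ and $\Gamma(3/2+\nu)$, the ratio of Gamma functions telescopes and one obtains
\[
(\nu,1) = -\left(\tfrac12 - \nu\right)\left(\tfrac12 + \nu\right) = \nu^2 - \tfrac14.
\]
Substituting into \eqref{ab} with $n=1$ then gives, for real $z>0$,
\[
I_\nu(z) = \frac{e^z}{\sqrt{2\pi z}}\left[1 - \frac{4\nu^2 - 1}{8z} + O(z^{-2})\right],
\]
and replacing $\nu$ with $\nu+1$,
\[
I_{\nu+1}(z) = \frac{e^z}{\sqrt{2\pi z}}\left[1 - \frac{4(\nu+1)^2 - 1}{8z} + O(z^{-2})\right].
\]

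Next I would take the quotient $y_\nu = I_{\nu+1}/I_\nu$. The prefactors $e^z/\sqrt{2\pi z}$ cancel, and expanding the denominator via $(1-\varepsilon)^{-1} = 1 + \varepsilon + O(\varepsilon^2)$ with $\varepsilon = O(z^{-1})$ yields
\[
y_\nu(z) = 1 + \frac{(4\nu^2 - 1) - (4(\nu+1)^2 - 1)}{8z} + O(z^{-2}) = 1 - \frac{2\nu + 1}{2z} + O(z^{-2}),
\]
after simplifying the numerator to $-(8\nu + 4)$. Multiplying by $z$ and letting $z\to\infty$ delivers the claim \eqref{asy}.

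The only real obstacle is the bookkeeping involved in computing $(\nu,1)$ correctly and in tracking the $z^{-1}$ coefficient through the quotient; the rest is a direct reading. As a consistency check, the same answer emerges formally from the Riccati equation \eqref{deBQ}: substituting the ansatz $y_\nu(z) = 1 - f(z)/z$ and using $y_\nu\to 1$ gives $f'(z) = (2\nu+1) - 2 f(z) + O(z^{-1})$, whose stable equilibrium is $f = (2\nu+1)/2$. Turning this purely ODE-theoretic argument into a proof would require ruling out oscillation of $f$ at infinity, which is why using the already-available asymptotics \eqref{ab} is preferable here.
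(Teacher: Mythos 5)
Your proof is correct and follows essentially the same route as the paper: both rest on Hankel's expansion \eqref{ab} taken to first order for $I_\nu$ and $I_{\nu+1}$, with the key computation being the first Hankel coefficients from \eqref{nuk} (you evaluate $(\nu,1)=\nu^2-\tfrac14$ and expand the quotient $I_{\nu+1}/I_\nu$, while the paper equivalently computes $-(\nu,1)+(\nu+1,1)=2\nu+1$ in the numerator of $\frac{I_\nu-I_{\nu+1}}{I_\nu}$). The Riccati consistency check is a nice extra, and you correctly note it is only heuristic.
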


\begin{proof}
To prove \eqref{asy} we use the asymptotic expansions \eqref{ab} and \eqref{ab2}, which give
\begin{align*}
1 - y_\nu(z) & = \frac{I_\nu(z) - I_{\nu+1}(z)}{I_\nu(z)}
\\
& = \frac{\frac{e^z}{(2\pi z)^{1/2}}}{\frac{e^z}{(2\pi z)^{1/2}} \left(1+ O(z^{-1})\right)} \left[1 - \frac{(\nu,1)}{(2z)} - 1 + \frac{(\nu+1,1)}{(2z)}+ O\left(\frac{1}{z^2}\right)\right]
\\
& = \frac{1}{1+ O(z^{-1})} \left(\frac{-(\nu,1) + (\nu+1,1)}{(2z)} + O\left(\frac{1}{z^2}\right)\right)
\\
& = \left[\frac{-(\nu,1) + (\nu+1,1)}{2z} + O\left(\frac{1}{z^2}\right)\right] \left(1+O\left(\frac 1z\right)\right).
\end{align*}
Using \eqref{nuk} we find
\begin{align*}
- (\nu,k) + (\nu+1,1) & = \frac{\G(1/2 - \nu + 1) \G(1/2 + \nu + 1)}{\G(2) \G(1/2-\nu) \G(1/2+\nu)} 
\\
& - \frac{\G(1/2 - (\nu+1) + 1) \G(1/2 + (\nu+1) + 1)}{\G(2) \G(1/2-(\nu+1)) \G(1/2+(\nu+1))}
\\
& = (1/2 - \nu)(1/2 + \nu) - ((1/2 - \nu)-1)((1/2 + \nu)+1)
\\
& = - (1/2 - \nu) + (1/2 + \nu) +1 = 2 \nu + 1. 
\end{align*}
The latter two formulas prove that as $z\to \infty$
\begin{equation}\label{asy000}
z (1 - y_\nu(z)) = \left(\frac{2 \nu + 1}2 + O(z^{-1})\right)(1 +  O(z^{-1})) = \frac{2 \nu + 1}2 + O(z^{-1}).
\end{equation}

\end{proof}

\begin{remark}\label{R:asy}
We emphasize that \eqref{asy} implies, in particular, that for large $z>0$ we must have
\begin{equation}\label{asy2}
\begin{cases}
1- y_\nu(z) < 0,\ \ \ \ \ \ \ \ \text{when}\ -1<\nu<-1/2,
\\
1- y_\nu(z) > 0,\ \ \ \ \ \ \ \ \text{when}\ -1/2<\nu.
\end{cases}
\end{equation}
The equation \eqref{asy2} shows that the function $y_\nu(z)$ approaches the asymptotic line $y=1$ from above when $-1<\nu<-1/2$, and from below when $\nu > -1/2$. 
\end{remark}

In fact, when $\nu\ge - 1/2$ we have the following global information. 

\begin{prop}\label{P:soni}
For every $\nu\ge -\frac 12$ and for every $z>0$ one has
\begin{equation}\label{quotients}
y_{\nu}(z) <  1.
\end{equation}
\end{prop}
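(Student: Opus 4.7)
The plan is to split the argument into two cases and, in the main case, to reduce the inequality to the strict positivity of an elementary integrand via Poisson's representation \eqref{prepInu}.

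\emph{Case $\nu = -1/2$.} This is immediate from the explicit formulas \eqref{half}: one has
\[
y_{-1/2}(z) = \frac{I_{1/2}(z)}{I_{-1/2}(z)} = \frac{\sinh z}{\cosh z} = \tanh z,
\]
which is strictly less than $1$ for all $z>0$.

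\emph{Case $\nu > -1/2$.} The idea is to rewrite $I_{\nu+1}(z)$ via a single integration by parts in the Poisson integral, so that both $I_\nu(z)$ and $I_{\nu+1}(z)$ appear as integrals with the \emph{same} weight $(1-t^2)^{\nu-1/2}$, after which their difference is a manifestly positive integral. Starting from
\[
I_{\nu+1}(z) = \frac{(z/2)^{\nu+1}}{\sqrt{\pi}\,\G(\nu+3/2)} \int_{-1}^1 e^{zt}(1-t^2)^{\nu+1/2}\,dt,
\]
I would integrate by parts with $u=(1-t^2)^{\nu+1/2}$ and $dv=e^{zt}\,dt$. Since $\nu+1/2>0$, the boundary terms at $t=\pm 1$ vanish, and using $\G(\nu+3/2) = \tfrac{2\nu+1}{2}\G(\nu+1/2)$ to absorb the constants, one arrives at
\[
I_{\nu+1}(z) = \frac{(z/2)^\nu}{\sqrt{\pi}\,\G(\nu+1/2)} \int_{-1}^1 t\, e^{zt}(1-t^2)^{\nu-1/2}\,dt.
\]
Subtracting this from \eqref{prepInu} applied to $I_\nu(z)$ yields
\[
I_\nu(z) - I_{\nu+1}(z) = \frac{(z/2)^\nu}{\sqrt{\pi}\,\G(\nu+1/2)} \int_{-1}^1 (1-t)\, e^{zt}(1-t^2)^{\nu-1/2}\,dt.
\]

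To conclude, I would observe that the integrand factors as $e^{zt}(1-t)^{\nu+1/2}(1+t)^{\nu-1/2}$, which is strictly positive on $(-1,1)$, and is integrable there since $\nu+1/2>0$ and $\nu-1/2>-1$. The prefactor is also strictly positive for $z>0$, so $I_\nu(z) > I_{\nu+1}(z)$, and because $I_\nu(z), I_{\nu+1}(z)>0$ (already recorded in the discussion after \eqref{I}), dividing by $I_\nu(z)$ gives $y_\nu(z)<1$, as desired. The argument is essentially calculation; the only delicate point is the vanishing of the boundary terms in the integration by parts, which is exactly what forces the strict restriction $\nu>-1/2$ and is responsible for the failure of \eqref{quotients} in the range $-1<\nu<-1/2$ described in Remark \ref{R:asy}.
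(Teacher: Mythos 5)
Your proposal is correct and follows essentially the same route as the paper: the case $\nu=-1/2$ via the explicit formulas \eqref{half}, and the case $\nu>-1/2$ via the Poisson representation \eqref{prepInu} plus a single integration by parts to express $I_\nu-I_{\nu+1}$ as a positive integral (the paper reproduces Soni's original argument in exactly this way, with Nasell's observation for $\nu=-1/2$). The only difference is cosmetic but pleasant: by staying on $[-1,1]$ with the weight $e^{zt}$ you get the manifestly positive integrand $(1-t)e^{zt}(1-t^2)^{\nu-1/2}$, whereas the paper's $[0,1]$ form with $\cosh$ and $\sinh$ requires the small extra verification that $\coth(zt)-t>0$ on $(0,1]$.
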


\begin{proof}
This result was proved by Soni in \cite{So} when $\nu> - 1/2$. Since it is not easy to obtain Soni's original paper, we provide its short proof in what follows. By \eqref{prepInu} above, we have
\[
I_\nu(z)  = \frac{2 (z/2)^\nu}{\sqrt{\pi} \G(\nu+1/2)} \int_{0}^1 \cosh(zt) (1-t^2)^{\nu-1/2} dt,\ \ \ \ \ \ \ \ \ \Re \nu > - \frac 12.
\]
From this formula we obtain
\[
I_{\nu+1}(z)  = \frac{2^{-\nu} z^{\nu+1}}{\sqrt{\pi} (\nu + 1/2)\G(\nu+1/2)} \int_{0}^1 \cosh(zt) (1-t^2)^{\nu+1/2} dt,\ \ \ \ \ \ \ \ \ \Re \nu > - \frac 12.
\]
Integrating by parts in the integral in the right-hand side we find
\[
I_{\nu+1}(z)  =  \frac{2 (z/2)^{\nu}}{\sqrt{\pi} \G(\nu+1/2)} \int_{0}^1 t \sinh(zt) (1-t^2)^{\nu-1/2} dt,\ \ \ \ \ \ \ \ \ \Re \nu > - \frac 12.
\]
These formulas give for $\nu > - \frac 12$
\begin{align*}
& I_\nu(z)  - I_{\nu+1}(z)  = \frac{2 (z/2)^\nu}{\sqrt{\pi} \G(\nu+1/2)} \int_{0}^1 [\cosh(zt) - t \sinh(zt)] (1-t^2)^{\nu-1/2} dt
\\
& = \frac{2 (z/2)^\nu}{\sqrt{\pi} \G(\nu+1/2)} \int_{0}^1 [\coth(zt) - t] \sinh(zt) (1-t^2)^{\nu-1/2} dt.
\end{align*}
Since $t\to \sinh(zt) (1-t^2)^{\nu-1/2}$ is $>0$ on the interval $[0,1]$, except at $t = 0$, we infer that \eqref{quotients} will be true if for every $z>0$ and $0\le t \le 1$ one has $\coth(zt) - t \ge 0$. Now, $\p_t (\coth(zt) - t) = - \frac{z}{\sinh^2(zt)} -1 < 0$ for every $0<t\le 1$. Therefore, for every $z>0$ the function $h(t) = \coth(zt) - t$ is strictly decreasing on $(0,1)$. Since $h(1) = \coth(z) - 1>0$, we conclude that $h(t) >0$ for every $0\le t \le 1$. This proves Soni's inequality \eqref{quotients} when $\nu > - 1/2$. 

Later, Nasell observed in \cite{Na} that the statement is also true when $\nu = -1/2$. To see this not that from \eqref{half} above we find 
\begin{equation}\label{monohalf}
I_{-1/2}(z) - I_{1/2}(z) =  \left(\frac{2}{\pi z}\right)^{1/2} e^{-z}.
\end{equation}
Dividing by $I_{-1/2}(z)$ in \eqref{monohalf}, we find
\[
1 - \frac{I_{1/2}(z)}{I_{-1/2}(z)} = \left(\frac{2}{\pi z}\right)^{1/2} \frac{e^{-z}}{I_{-1/2}(z)} = \frac{2 e^{-z}}{e^{z} + e^{-z}} = \frac{2}{e^{2z} + 1},
\]
or equivalently
\begin{equation}\label{Ihalf}
\frac{I_{1/2}(z)}{I_{-1/2}(z)} = 1 - \frac{2}{e^{2z} + 1} < 1.
\end{equation}
Notice that in agreement with \eqref{asy} in Proposition \ref{P:asy}, we have
\[
\underset{z\to \infty}{\lim} z\left[1 - \frac{I_{1/2}(z)}{I_{-1/2}(z)}\right] = 0.
\]
Although \eqref{Ihalf} shows that for every $z>0$ we have
\begin{equation}\label{Ihalf2}
\left(\frac{I_{1/2}(z)}{I_{-1/2}(z)}\right)^2 - 1 < 0,
\end{equation}
the estimate does prove that the bound with $1$ is not optimal, since in fact the more precise formula
\begin{equation}\label{mpb}
\left(\frac{I_{1/2}(z)}{I_{-1/2}(z)}\right)^2 - 1 = - \frac{4}{e^{2z} + 1}\left(1- \frac{1}{e^{2z} + 1}\right)
\end{equation}
is available.

\end{proof}

\begin{remark}\label{R:soni}
We mention that the limitation $\nu \ge - 1/2$ in the global estimate in Proposition \ref{P:soni} is responsible for the limitation $a\ge 0$ in Theorems \ref{T:LYgen} and \ref{T:harnackbessel0} above.
\end{remark}

We take the occasion here to correct an oversight in the formula (A.3) in \cite{YK}, where the value of the limit \eqref{asy} is stated to be $2\nu+1$. We were in fact indirectly led to rediscovering Proposition \ref{P:asy}, after realizing that (A.3) in \cite{YK} cannot possibly be correct since it would lead to the contradictory conclusion that, when $-1<\nu<-\frac 12$, the function
\[
f(z) = f_\nu(z) = z^2 (y_\nu^2(z) - 1),
\]
be bounded on the whole line. While, if true, such result would be great in settling the question raised in Remark \ref{R:otherrange} above, unfortunately, the function $f(z)$ cannot possibly be bounded on $(0,\infty)$ since, as a consequence of \eqref{asy}, we know that 
\begin{equation}\label{asy00}
\underset{z\to \infty}{\lim} f(z) = \infty.
\end{equation}
To see \eqref{asy00}, we note that \eqref{limitinfty} gives
\[
\underset{z\to \infty}{\lim} (1 + y(z)) = 2.
\]
If we write
\[
f(z) = - z^2 (1- y(z))(1+ y(z)),
\]
then  \eqref{asy00} follows, since we have by \eqref{asy}
\[
\underset{z\to \infty}{\lim} \left\{- z (1- y(z))(1+ y(z))\right\} >0.
\]

We now have
\[
f'(z) = - 2z(1- y^2(z)) + 2 y(z) y'(z) z^2.
\]
Using the differential equation \eqref{deBQ}, we find
\begin{align*}
f'(z) & = - 2z\left(1- y^2(z)\right) + 2 y(z) z^2 \left(1 - y(z)^2 - \frac{2\nu+1}{z} y(z)\right).
\end{align*}
We rewrite this equation as follows
\begin{align*}
\frac{f'(z)}z & = - 2\left(1- y^2(z)\right) + 2 y(z) z \left(1 - y(z)^2\right) - 2(2\nu+1)  y(z)^2.
\end{align*}
By \eqref{limitinfty} we see that $- 2(1- y^2(z)) \to 0$ and that $- 2(2\nu+1)  y(z)^2 \to - 2(2\nu+1)$ as $z\to \infty$.
On the other hand, if instead of \eqref{asy} we had (A.3) in \cite{YK}, then such fact and \eqref{limitinfty} would  give $2 y(z) z \left(1 - y(z)^2\right) \to 4(2\nu+1)$ as $z\to \infty$. We would conclude that
\[
\underset{z\to \infty}{\lim} \frac{f'(z)}z = 4(2\nu+1)- 2(2\nu+1) = 2(2\nu+1)<0.
\]
This limit relation would imply, in particular, that $f(z)$ is decreasing in a neighborhood of infinity, and since $f(z)\ge 0$ at infinity by \eqref{asy}, we would reach the conclusion that $f(z)$ be bounded at infinity. But this is a contradiction with \eqref{asy00}.

Having said this, we note that the correct limiting relation \eqref{asy} above implies that, in fact, what one has is
\[
\underset{z\to \infty}{\lim} \frac{f'(z)}z = 0, 
\]
and this leads to no contradiction.

We close this section with the second critical property of the Bessel quotient $y_\nu$ of interest in this paper, namely, its monotonicity. It is certainly well-known to most experts of special functions but the only proof we could locate is embedded in the discussion in the Appendix of \cite{YK}. Since we need this result in the proof of Theorems \ref{T:struwe0} and \ref{T:poon0}, for the reader's convenience we provide its proof.

\begin{prop}\label{P:monotonicityy}
When $\nu \ge - 1/2$ the Bessel quotient $y_\nu$ strictly increases on $(0,\infty)$ from $y_\nu(0) = 0$ to its asymptotic value $y_\nu(\infty) = 1$. If instead $-1<\nu < - 1/2$, then $y_\nu$ first increases to its absolute maximum $>1$, and then it becomes strictly decreasing to its asymptotic value $y_\nu(\infty) = 1$. 
\end{prop}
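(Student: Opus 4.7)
The plan is to exploit the Riccati equation \eqref{deBQ} together with the initial behavior \eqref{zerozero}, Proposition \ref{P:soni}, and the asymptotic \eqref{asy}. The central observation is a rigidity at critical points: if $z_0 > 0$ satisfies $y'(z_0) = 0$, then \eqref{deBQ} forces $1 - y(z_0)^2 = \frac{2\nu+1}{z_0}\, y(z_0)$, and differentiating \eqref{deBQ} once more gives
\begin{equation*}
y''(z_0) \;=\; \frac{2\nu+1}{z_0^2}\, y(z_0).
\end{equation*}
Since $I_\nu, I_{\nu+1} > 0$ on $(0,\infty)$ for $\nu > -1$, one has $y(z_0) > 0$, so the sign of $y''(z_0)$ is dictated entirely by that of $2\nu+1$. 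This dichotomy is what drives the two different behaviors.

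For $\nu \ge -1/2$ I would argue by contradiction. From \eqref{zerozero} we have $y_\nu(z) \sim z/(2(\nu+1))$ as $z\to 0^+$, so $y$ is strictly increasing on a right neighborhood of $0$. If $y'$ were to vanish somewhere, let $z_0$ be its smallest zero in $(0,\infty)$, so $y'>0$ on $(0,z_0)$ and $y'(z_0)=0$. When $\nu > -1/2$, the displayed formula yields $y''(z_0) > 0$, meaning $z_0$ is a strict local minimum of $y$, incompatible with $y$ being strictly increasing up to $z_0$. For the borderline case $\nu = -1/2$ one can either invoke the explicit form
\[
y_{-1/2}(z) \;=\; 1 - \frac{2}{e^{2z}+1},
\]
obtained from \eqref{Ihalf}, which is manifestly strictly increasing, or observe that \eqref{deBQ} reduces to $y' = 1 - y^2$, which is positive on $(0,\infty)$ since Proposition \ref{P:soni} guarantees $0 < y < 1$ there. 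Either way $y' > 0$ throughout $(0,\infty)$; combining this with $y(0)=0$ and $y(\infty)=1$ settles the first claim.

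For $-1 < \nu < -1/2$ the same computation applies but the inequality flips: at any critical point $z_0$ one now has $y''(z_0) < 0$, so every critical point is a \emph{strict local maximum}. Two strict local maxima would have to be separated by an interior minimum, which would itself be a critical point yet not a local maximum, a contradiction; hence $y$ has at most one critical point on $(0,\infty)$. On the other hand, Remark \ref{R:asy} together with \eqref{limitinfty} shows that $y_\nu(z) \to 1$ strictly from above as $z\to\infty$, while by \eqref{zerozero} one has $y_\nu(0^+)=0$; therefore $y_\nu$ must attain an absolute maximum at some interior point $z^*$, where the Riccati identity at $z^*$ forces $1 - y(z^*)^2 = (2\nu+1)\, y(z^*)/z^* < 0$, i.e.\ $y(z^*)>1$. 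Because $z^*$ is the unique zero of $y'$, the derivative has constant sign on each of $(0, z^*)$ and $(z^*,\infty)$, and the boundary values $y(0)=0$, $y(z^*)>1$, $y(\infty)=1$ pin down $y'>0$ on the first interval and $y'<0$ on the second, which is precisely the second claim. The main obstacle throughout is extracting the correct qualitative information from the asymptotic behavior — in particular exploiting the refined \eqref{asy000} to conclude that $y_\nu$ approaches $1$ from \emph{above} exactly when $\nu < -1/2$ — without which the existence of the maximizer $z^*$ could not be guaranteed.
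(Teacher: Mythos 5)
Your proposal is correct and follows essentially the same route as the paper: differentiate the Riccati equation \eqref{deBQ} to get $y''(z_0)=\frac{2\nu+1}{z_0^2}y(z_0)$ at critical points, use $y'>0$ near $0$, treat $\nu=-1/2$ via \eqref{Ihalf}, and use Proposition \ref{P:asy}/Remark \ref{R:asy} to force an interior maximum with value $>1$ when $-1<\nu<-1/2$. The only differences are cosmetic (how the "no sign change of $y'$" contradiction is organized), so no further comment is needed.
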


\begin{proof}
When $\nu = - 1/2$ the desired conclusion follows from the explicit expression in \eqref{Ihalf} above. We thus assume $\nu \not= - 1/2$, or equivalently $2\nu + 1\not=0$. Differentiating \eqref{deBQ} we obtain
\begin{equation}\label{2deBQ}
y''(z) = \frac{2\nu+1}{z^2} y(z) - \left(\frac{2\nu+1}z + 2 y(z)\right) y'(z).
\end{equation}
At a stationary point $z_0>0$ we have $y'(z_0) = 0$, and thus \eqref{2deBQ} gives
\[
y''(z_0) = \frac{2\nu+1}{z_0^2} y(z_0).
\]
Since $y(z)>0$ for every $z>0$, it ensues that the sign of $y''(z_0)$ is the same as that of $2\nu+1$. Therefore, $y$ can only have strict local minima if $\nu>-1/2$, strict local maxima if $-1<\nu<-1/2$. 

On the other hand, the equation \eqref{deBQ} implies that $y'$ cannot change sign when $\nu>-1/2$. Otherwise, there
would be a point $z_0>0$ at which $y'(z_0) = 0$. From what has been said, $y$ must have a strict local minimum at $z_0$, and therefore $y'(z) \le 0$ in a left neighborhood of $z_0$, whereas $y'(z) \ge 0$ in a small right neighborhood of $z_0$. Since by \eqref{deBQ} we obtain
\[
y'(0) = \frac{1}{2\nu+2}>0,
\]
we infer that there must be a point $0<z_1<z_0$ where $y'(z_1) = 0$ and where $y$ attains a local maximum. Since this is impossible from what has been said above, we conclude that $y'$ cannot change sign when $\nu>-1/2$, and therefore $y' > 0$ for such values of $\nu$. When instead $-1<\nu<-1/2$, then \eqref{asy} above shows that in a neighborhood of infinity we must have $y>1$. Since $y(\infty) = 1$ there must be a point $z_0>0$ where $y'(z_0) < 0$. Since as we have observed $y'(0) >0$, there must be a point $z_1\in (0,z_0)$ where $y'(z_1) = 0$. From what we have said, $y$ can only have a strict local maximum at $z_1$. Since the equation \eqref{deBQ} gives
\[
0 = y'(z_1) = 1 - y(z_1)^2 - \frac{2\nu+1}{z_1} y(z_1),
\]
we conclude that $y(z_1)>1$. Since $y'$ can only change sign once, we finally infer that $y(z_1)$ is not only a strict local maximum, but also a global one.

\end{proof}

\bibliographystyle{amsplain}

\end{document}